\newtheorem{thm}{Theorem}[section]
\newtheorem{lemma}[thm]{Lemma}
\newtheorem{prop}[thm]{Proposition}
\newtheorem{cor}[thm]{Corollary}
\newtheorem{question}[thm]{Question}
\theoremstyle{definition}
\newtheorem{rmk}[thm]{Remark}
\newtheorem{example}[thm]{Example}
\newcommand{\Hom}{\mbox{\rm Hom}}
\newcommand{\soc}{\mbox{\rm soc}}                     
\newcommand{\depth}{\mbox{\rm depth}}
\newcommand{\pdim}{\mbox{\rm pdim}}
\newcommand{\reg}{\mbox{\rm reg}}
\newcommand{\coker}{\mbox{\rm coker}}
\newcommand{\codim}{\mbox{\rm{codim}}}
\newcommand{\BB}{\mbox{$\mathbb B$}}
\newcommand{\BF}{\mbox{$\mathbb F$}}
\newcommand{\BN}{\mbox{$\mathbb N$}}
\newcommand{\BQ}{\mbox{$\mathbb Q$}}
\newcommand{\BZ}{\mbox{$\mathbb Z$}}
\newcommand{\m}{\mbox{$\mathfrak m$}}
\newcommand{\sk}{\mbox{$\mathsf{k}$}}
\begin{document}

\author[H. Ananthnarayan]{H. Ananthnarayan}
\address{Department of Mathematics, I.I.T. Bombay, Powai, Mumbai 400076.}
\email{ananth@math.iitb.ac.in}

\author[Omkar Javadekar]{Omkar Javadekar}
\address{Department of Mathematics, I.I.T. Bombay, Powai, Mumbai 400076.}
\email{omkar@math.iitb.ac.in}

\author[Rajiv Kumar]{Rajiv Kumar}
\address{Department of Mathematics, Indian Institute of Technology Jammu, J\&K, India - 181221.}
\email{rajiv.kumar@iitjammu.ac.in}

\begin{center}
\end{center}
\title{Betti Cones over Fibre Products}

\subjclass{13A02, 13D02, 13D40}

\keywords{Fibre product, Betti cone, Pure resolution, Koszul algebra}

\begin{abstract}
Let $R$ be a fibre product of standard graded algebras over a field. We study the structure of syzygies of finitely generated graded $R$-modules. As an application of this, we show that the existence of an $R$-module of finite regularity and infinite projective dimension forces $R$ to be Koszul. We also look at the extremal rays of the Betti cone of finitely generated graded $R$-modules, and show that when $\depth(R)=1$, they are spanned by the Betti tables of pure $R$-modules if and only if $R$ is Cohen-Macaulay with minimal multiplicity.
\end{abstract}
\maketitle
\section{Introduction}

Let $R$ be a standard graded algebra over an infinite field $\sk$, and $\mathbb B_{\mathbb Q}(R)$ denote the rational cone spanned by Betti tables of finitely generated graded $R$-modules. In \cite{ES09}, Eisenbud and Schreyer showed that if $R=\sk[X_1,\ldots,X_n]$, then the extremal rays of $\mathbb B_{\mathbb Q}(R)$ are spanned by the Betti tables of Cohen-Macaulay modules with a pure resolution. This result was conjectured by Boij and S\"oderberg in \cite{BS08}, where they also provided a proof when $n=2$. For an introduction and survey of Boij-S\"oderberg theory, we refer the reader to \cite{Fl12}.

After the work of Eisenbud and Schreyer, several authors took up the study of Betti cones and purity of their extremal rays for special classes of standard graded $\sk$-algebras.
In \cite[Theorem 4.3]{AK20}, Ananthnarayan and Kumar proved the purity of extremal rays of $\BB_{\mathbb Q}(R)$ for all standard graded $\sk$-algebras with Hilbert series $(1+nz)/(1-z)$. They also gave a complete description of the extremal rays of $\mathbb B_{\mathbb Q}(R)$. This was inspired by, and generalizes the results proved for quadratic hypersurfaces of embedding dimension two (\cite{BBEG12}), and the coordinate ring of three non-collinear points in the projective plane (\cite{GS16}). 

In general, the converse of the above result of Ananthnarayan-Kumar is false, for example it is known that the extremal rays of $\mathbb B_{\mathbb Q}(R)$ are spanned by pure modules when $R = \sk[X,Y]/\langle X^2, Y^2 \rangle$ (see \cite{Gi13}). In this article, we prove that a partial converse holds in Theorem \ref{MaintheoremFibreProducts} when $R$ is a fibre product (over the residue field). We show that if $\depth(R) = 1$ and the extremal rays of $\mathbb B_{\mathbb Q}(R)$ are pure, then $R$ is Cohen-Macaulay with minimal multiplicity, i.e., the Hilbert series of $R$ is of the form $(1+nz)/(1-z)$ for some $n \in \BN$. Using \cite[Theorem 7.3.4]{Ku17}, this can further be strengthened to say that if $R$ is a fibre product such that $\mathbb B_{\mathbb Q}(R)$ is spanned by Betti tables of pure modules, then $R$ is a level Cohen-Macaulay algebra.

One of the initial steps in understanding the Betti table of a module $M$ is to get information about its graded syzygies. Thus, we begin by studying the structure of syzygies of modules over a fibre product ring $R$. Let $(R_1,\m,\sk), (R_2,\m,\sk)$ be standard graded $\sk$-algebras, and $R= R_1 \times_{\mathsf k} R_2$ denote their fibre product. 
For each $R$-module $M$, it is known by work of Dress and Kr\"{a}mer in \cite{DS75} that the $i^{th}$ graded syzygy module, $\Omega_i^R(M)$, for $i\geq 2$, 
can be decomposed as a direct sum of an $R_1$-module and an $R_2$-module. We observe the same, and further describe the structure of $\Omega_i^R(M)$ ($i\geq 2$) in terms of first syzygy of $\pi_1(\Omega_{i-1}^R(M))$ and $\pi_2(\Omega_{i-1}^R(M))$, where $\pi_j:R\to R_j$ are the natural projections (see Proposition  \ref{first_syzygy} and Corollary \ref{cor:splitSyzygy}).

Due to the work of Avramov and Peeva \cite{AP01}, it is known that a $\sk$-algebra $R$ is Koszul if and only if every finitely generated graded $R$-module has finite regularity. If $R$ is a fibre product, we prove a stronger statement. We show in Proposition \ref{prop:koszulChar} that the Koszul property of fibre products is characterized merely by the existence of a module of infinite projective dimension and finite regularity. We also prove that the regularity of a pure $R$-module can be detected by looking at the degrees of the generators of the first two syzygy modules (see Remark \ref{regularityRemark}).

The organization of this article is as follows: In Section 2, we collect the necessary definitions, basic observations, and known results that are needed for the rest of the article. 
Section 3 is devoted to the discussion of syzygies of modules over fibre products. We start by describing the structure of syzygies of an $R_1$-module considered as an $R$-module in Proposition \ref{FieldSyzygyLemma}. As noted above, we also study the structure of $\Omega_i^R(M)$ ($i \geq 2$) of any $R$-module $M$, and show that it splits as a direct sum of an $R_1$-module and an $R_2$-module. 
These are used to study the Koszul property of fibre products in Section 4.
For example, we show that if $R$ is Koszul, then for each $R_1$-module $M$ we have $\reg_{R}(M)=\reg_{R_1}(M)$ (see Proposition \ref{Prop-equality-of-reg}). 
In Corollary \ref{UniversallyKoszulCorollary}, we also give an alternate proof for the fact that $R$ is universally Koszul if and only if the same is true for $R_1$ and $R_2$.
Finally, Section 5 focuses on the equality of the Betti cone $\mathbb B_{\mathbb Q}(R)$ and the pure Betti cone $\mathbb B_{\mathbb Q}^{\text{pure}}(R)$ over fibre products. We characterize this property when $\depth(R) = 1$ in Theorem \ref{thm:mainThmfibreProducts}. In Proposition \ref{prop:gorensteinArtinian}, we also give a class of examples of $R$, with $\depth(R) = 0$ where this property fails.

\section{Preliminaries}

In this section, we state the basic definitions and related observations needed in the rest of the article. Known results which are used in our article are also recorded here for the sake of completeness.

\subsection*{Notation:} Throughout the article, $\mathsf k$ denotes an infinite field, and $R$ is a standard graded $\sf k$-algebra, i.e., $R_0 = \sk$ and $R$ is generated as a $\sf k$-algebra by finitely many elements of degree one. We denote the unique homogeneous maximal ideal $\bigoplus_{i\geq 1} R_i$ of $R$ by $\mathfrak m$.

\subsection{Graded Rings and Modules}\label{Def:GradedModules}
Let $M =  \bigoplus_{j \in \mathbb{Z}} M_j$ and $N =  \bigoplus_{j \in \mathbb{Z}} N_j$ be a graded $R$-module.
\begin{enumerate}[{\rm a)}]
\item The \emph{$n$-twist} of a graded module $M$, denoted by $M(n)$, is the graded module defined as \\ $M(n)_j= M_{n+j}$ for all $j \in \BZ$.

\item The socle of $R$ is defined as $\soc(R) = 0:_R\m$. If $R$ is Cohen-Macaulay, we say that $R$ is \emph{level} if there exists a maximal $R$-regular sequence of linear forms $\underline{x}$ such that $R/\langle \underline{x} \rangle$ is an Artinian ring,  whose socle lies in a single degree, i.e., there is a $d \in \mathbb N$ such that $\soc(R/\langle \underline x \rangle)$ is isomorphic to finitely many copies of $\sf k(d)$.

\item\label{HilbertSeries} If $M$ is finitely generated, the \emph{Hilbert series of $M$} is defined as $H_M(z)= \sum_{j\in \mathbb{Z}} \dim_{\mathsf k}(M_j) z^j$.\\ 
It is well-known (e.g., see \cite[Section 4.1]{BH93}) that there exists $f(z) \in \mathbb{Z}[z,z^{-1}]$ such that\\ 
$H_M(z)= f(z)/(1-z)^d$, where $d= \dim(M)$ and $f(1) \in \mathbb N$.

\item An $R$-linear map $\phi:M \to N$ called a \emph{graded map of degree $n$} if $\phi(M_j) \subset N_{n+j}$ for all $j \in \BZ$. By convention, the term `graded map' means a graded map of degree zero.

\end{enumerate} 

\subsection{Invariants in Graded Resolutions}
 Let $M$ be a finitely generated graded $R$-module and 
\[\BF_{\bullet} : \cdots \xrightarrow[]{} F_n \xrightarrow[]{\phi_n} F_{n-1} \xrightarrow[]{\phi_{n-1}} \cdots \xrightarrow[]{\phi_1}F_0 \xrightarrow[]{\phi_0} M \xrightarrow[]{} 0\]
be a minimal free resolution of $M$ over $R$, i.e., for each $i$, $\phi_i$ is a graded map of degree zero, and $\ker(\phi_i) \subset \m F_i$. 

\begin{enumerate}[{\rm a)}]
\item We say that $\phi_1$ is a \emph{minimal presentation matrix} of $M$ over $R$. 

\item The module $\Omega_i^R(M)=\ker(\phi_{i-1})$ is a graded $R$-module, called the \emph{$i^{th}$ syzygy module} of $M$. The number of minimal generators of $\Omega_i^R(M)$ in degree $j$ is denoted by $\beta_{i,j}^R(M)$, and is called the  \emph{$(i,j)^{th}$ graded Betti number} of $M$. The number $\beta_i^R(M)=  \sum_{j}^{} \beta_{i,j}^R(M)$ is called the \emph{total $i^{th}$ Betti number of $M$}, and equals the minimal number of generators of $\Omega_i^R(M)$. 

\item Let $\beta_{i,j}=\beta_{i,j}^R(M)$. Then the \emph{Betti table} of $M$ is written as 

\begin{center} $\beta^R(M) =$
\begin{tabular}{|c |c c c c c|}
\hline
\backslashbox{j}{i}  & 0 & 1 & $\cdots$ & i & $\cdots$ \\
\hline
\vdots  & \vdots & \vdots & $\cdots$ & \vdots & \vdots \\

   0      & $\beta_{0,0}$ & $\beta_{1,1}$ & $\cdots$ & $\beta_{i,i}$ & \vdots \\

1 & $\beta_{0,1}$ & $\beta_{1,2}$ & $\cdots$ & $\beta_{i,i+1}$ & \vdots \\

\vdots & \vdots & \vdots & $\cdots$ & \vdots & \vdots \\

\hline
\end{tabular} \\
\end{center} 
Note that the $(i,j)^{th} $ entry in $\beta^R(M)$ is $\beta_{i,i+j}$.
\item The series $\mathcal{P}^R_M(z)= \sum_{i\geq 0} \beta_i^R(M)z^i$ (or simply $\mathcal{P}_M(z)$) is called the \emph{Poincaré series of $M$}. 

\item 
The \emph{regularity of $M$} is defined as
\[ \reg_R(M) = \sup \{ j-i \mid \beta_{i,j}^R(M) \neq 0\}. \]
\end{enumerate}

The following result is a crucial component of Theorem \ref{MaintheoremFibreProducts}.
\begin{rmk}\label{lemma-generators-up-to-degree-j}
\cite[Lemma 4.6]{AK20} Let $M$ be a finitely generated graded $R$-module,
Let $M^{(j)}$ be the submodule of $M$ generated by elements of degree at most $j$. Then $\Omega_i^R(M)^{(j+i)} \simeq \Omega_i^R(M^{(j)})^{(j+i)}$ and $\beta_{i, k+i}^R(M) = \beta_{i, k+i}^R(M^{(j)})$ for $k \leq j$ and $i \geq 0$.
\end{rmk}

\subsection{Pure and Linear Resolutions, Koszul Algebras} Let $\BF_{\bullet}$ be a minimal graded free resolution of $M$ over $R$.
\begin{enumerate}[a)]
\item We say that $\BF_{\bullet}$ is \emph{pure} if for every $i$, $\beta_{i,j}^R(M) \neq 0$ for at most one $j$. In such a case, $M$ is said to be a \emph{pure module} of type 
\begin{enumerate}[{\rm i)}]
\item ${\mathbf{\delta}}=(\delta_0, \delta_1,\delta_2,\ldots)$ if $\pdim_R(M)= \infty$ and $\beta_{i,\delta_i}^R(M) \neq 0$ for all $i \geq 0$. 

\item $\delta= (\delta_0, \delta_1, \ldots, \delta_p, \infty, \infty,\ldots)$ if $\pdim_R(M)=p$ and $\beta_{i,\delta_i}^R(M) \neq 0$ for $0 \leq i \leq p$.

\end{enumerate}

\item A pure module $M$, generated in degree $\delta_0$, is said to have a \emph{linear resolution} if $\beta_{i,j}^R(M) \neq 0$ implies that $j = \delta_0 + i$. We say that a module $M$ generated in degree $\delta_0$ is \emph{linear up to the $k^{th}$ stage} if for $i \leq k$, $\beta_{i,j}^R(M)=0$ for $j \neq \delta_0+i$.

\item The ring $R$ is said to be a \emph{Koszul algebra} if $\sk$ has a linear resolution over $R$, and a \emph{universally Koszul algebra} if every ideal of $R$ generated by linear forms has a linear resolution.

\end{enumerate}

\subsection{Betti Cones and Extremal Rays} Let $R$ be a standard graded $\sk$-algebra.
\begin{enumerate}[a)]
\item Then the \emph{Betti cone of finitely generated $R$-modules} is
\[ \mathbb{B}_{\mathbb{Q}}(R) = \{ c_1 \beta^R(M_1) + \cdots +c_n \beta^R(M_n) \mid c_i \in \mathbb{Q}_{\geq 0}, M_i {\text{\ is a finitely generated\ }} R{\text{-module}}\}. \]
Similarly, we define the \emph{Betti cone of pure $R$-modules} as
\[ \BB^{\text{pure}}_{\mathbb{Q}}(R) = \{ c_1 \beta^R(M_1) + \cdots +c_n \beta^R(M_n) \mid c_i \in \mathbb{Q}_{\geq 0}, M_i {\text{\ is a finitely generated pure\ }} R{\text{-module}}\}. \]

\item A Betti table $\beta^R(M)$ of a nonzero finitely generated $R$-module $M$ is said to be \emph{extremal} in the Betti cone $\mathbb{B}_{\mathbb{Q}}(R)$ if whenever there exist $c_1, \ldots,c_n\in \mathbb{Q}_{\geq 0}$ and finitely generated $R$-modules $M_1,\ldots,M_n$ such that
$\beta^R(M) = \sum_{i=1}^{n} c_i \beta^R(M_i)$, then we have  $\beta^R(M)= c \beta^R(M_i)$ for some $i \in \{1,\ldots, n\}$ and $c \in \mathbb{Q}$.

\item The condition $\BB_{\mathbb{Q}}(R)=\BB^{\text{pure}}_{\mathbb{Q}}(R)$ is equivalent to all extremal rays of $\mathbb B_{\mathbb Q}(R)$ being generated by Betti tables of pure modules.
\end{enumerate}

\begin{rmk}\label{dim0and1extremals} \cite[Theorem 4.3]{AK20}  
Let $R$ be a standard graded $\sk$-algebra with $H_R(z)= (1+nz)/(1-z)^d$, where $\dim(R) = d \leq 1$. Then $\BB_{\mathbb{Q}}(R)=\BB^{\text{pure}}_{\mathbb{Q}}(R)$. Moreover, 
\begin{enumerate}[{\rm a)}]
\item if $d=0$, then the extremal rays of $\BB_{\mathbb{Q}}(R)$ are spanned
by the Betti diagrams of the shifts of $R$, and $\sk$.
\item if $d=1$, then  the extremal rays of $\BB_{\mathbb{Q}}(R)$ are spanned
by the Betti diagrams of the shifts of $R$, and $R/\m^i$, and $R/\langle l\rangle ^i$, where $i \in \mathbb{N} $ and $l$ is a linear regular element in $R$.
\end{enumerate} 
\end{rmk}

\subsection{Fibre Products over $\sf k$}
Let $(R_1,\m_1,\sk)$, and $(R_2,\m_2,\sk)$ be standard graded $\sk$-algebras with $\m_1$ and $\m_2$ being the respective homogeneous maximal ideals, and $\epsilon_{1}:R_1 \to   \sk$, $\epsilon_{2}:R_2\to  \sk$  be the natural maps. Then the \emph{fibre product of $R_1$ and $R_2$ over $\sk$}, denoted by $R_1 \times_{\mathsf{k}} R_2$, is defined as
\[ R=R_1 \times_{\mathsf{k}} R_2 = \{ (g,h) \in R_1 \times R_2 \mid \epsilon_{1}(g) = \epsilon_{2}(h) \}. \]

For example, if $R_1= \sk[X_1,\ldots,X_n]$ and $R_2= \sk[Y_1,\ldots,Y_m]$, then we have \[ R = \sk [X_1,\ldots,X_n,Y_1,\ldots,Y_m]/\langle X_iY_j \mid 1\leq i\leq n, 1\leq j \leq m\rangle.\]

{\sc Note:} To avoid trivial cases, we assume throughout the article that $R_1 \not \simeq \sk$ and $R_2 \not \simeq \sk$. 

\begin{rmk}\label{rmk:basicProperitesOfFibreProducts}{\rm 
Let $(R_1,\m_1,\sk)$, and $(R_2,\m_2,\sk)$ be 
standard graded $\sk$-algebras and $R = R_1 \times_{\sf k} R_2$.
 \begin{enumerate}[{\rm a)}]
\item The fibre product $R$ fits into the following pullback diagram

\[ \begin{tikzcd}
R \arrow{r} \arrow{d} & R_1 \arrow{d}{\epsilon_{1}} \\
 R_2 \arrow{r}[swap]{\epsilon_{2}} & \sk 
\end{tikzcd} \]
 where $ R \to R_1$ and $ R \to R_2$ are the natural projections.

\item If $(R_1, \m_1, \sk), (R_2, \m_2, \sk)$ are standard graded $\sk$-algebras, then so is $R$, with homogeneous maximal ideal $\m_1 \oplus \m_2$.

\item There is a short exact sequence $ 0 \to R  \to R_1 \oplus R_2 \to \sk \to 0$ of $ R$-modules. Hence we have $\depth( R) = \min \{1, \depth (R_1), \depth(R_2)\}$ and $\dim( R) = \max \{ \dim (R_1), \dim(R_2)\}$.
\end{enumerate}
}\end{rmk}
 More details about the above observations can be found in \cite[Chapter 4]{HAthesis}.

\section{Syzygies Over Fibre Products}\label{Syzygies Over Fibre Products}

Throughout this section, $R_1$ and $R_2$ are standard graded $\sk$-algebras, and $R=R_1 \times_{\mathsf k} R_2$. We look at the syzygies over $R$ in this section, initially of an $R_1$-module $M$. It follows from Lemma \ref{first_syzygy_for_direct_sum} that these are the building blocks towards understanding higher syzygies of a general $R$-module $M$, which we do later in the section.

\begin{rmk}\label{notation:FP} This remark sets up the notation for the rest of the article.
\begin{enumerate}[\rm a)]
\item For $R = R_1 \times_{\mathsf{k}} R_2$, and let $\pi_j: R \rightarrow R_j$ be the natural onto maps. 

For $j=1,2$ and $t\in \mathbb{N} $, we use the same notation to denote the natural projections $\pi_j: R^{\oplus t}\longrightarrow R_j^{\oplus t}$.

\item For $j=1,2$ we identify $\m_j$ as a subset of $\m$ via the natural inclusion $\iota_j:\m_j \to \m_1\oplus \m_2$. In a similar way, we identify $\m_j^{\oplus t}$ as a subset of $\m^{\oplus t}$.

\item Let $M$ be a finitely generated $R$-module with $\mu(M) = t$, and $F=\oplus_{j=1}^t Re_j$ map minimally onto $M$. Then every element of $\Omega_1^R(M)$ can be written as $ \sum_{j=1}^t (g_j+h_j)e_j$, where $g_j\in \m_1$ and $h_j\in \m_2$.

Furthermore, if $M$ is an $R_1$-module, then $\Omega_1^{R_1}(M) \subset \m_1^{\oplus t}$ can identified with the corresponding submodule of $\m_1F$.
\end{enumerate}  
\end{rmk}

\begin{lemma}{\label{first_syzygy_for_direct_sum}} 
Let $R_1$ and $R_2$ be standard graded $\sk$-algebras, $R= R_1 \times_{\mathsf k} R_2$. If $M_1$ is minimally generated as an $R_1$-module by $\{x_1,\ldots, x_n\}$, with $\deg(x_j)=c_j$, and $M_2$ is minimally generated as an $R_2$-module by $\{y_1, \ldots, y_m\}$, with $\deg(y_j) = c_j'$, then
 \begin{enumerate}[{\rm a)}]
 \item
 $\Omega_1^R(M_1) \simeq \Omega_1^{R_1}(M_1)  \oplus \left(\bigoplus_{j=1}^{n} \m_2(-c_j)\right)$. 
 
 In particular, if $M_1$ has a minimal generator in degree $d$, 
 then $\Omega_1^R(M_1)$ has a minimal genertor in degree $d+1$.
 
 \item $$ \Omega_1^R(M_1 \oplus M_2) \simeq \Omega_1^{R_1}(M_1)  \bigoplus 
                 \left( \bigoplus_{j=1}^{n} \m_2(-c_j) \right)  \bigoplus \Omega_1^{R_2}(M_2)\bigoplus \left( \bigoplus_{j=1}^{m} \m_1{(-c'_{j}))} \right).$$
 \end{enumerate}
 
 \end{lemma}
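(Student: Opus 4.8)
The plan is to understand $\Omega_1^R(M_1)$ directly from the definition of the fibre product, and then deduce part (b) by additivity of first syzygies over direct sums. Since the two summands $M_1$ and $M_2$ are modules over the two factors, I expect that the syzygy of the direct sum splits as the sum of the contributions of each, so the real content is part (a), with part (b) following by symmetry.

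For part (a), set up a minimal free cover. Let $F = \bigoplus_{j=1}^n R e_j$ with $\deg(e_j) = c_j$, mapping onto $M_1$ by $e_j \mapsto x_j$; this is minimal since $\{x_1,\dots,x_n\}$ minimally generates $M_1$ as an $R_1$-module (and $R \to R_1$ is surjective, so these same elements generate $M_1$ over $R$). Using Remark \ref{notation:FP}(c), write a general element of $\Omega_1^R(M_1)$ as $\sum_{j=1}^n (g_j + h_j) e_j$ with $g_j \in \m_1$, $h_j \in \m_2$. The key computation is to see how such an element acts on $M_1$: since $M_1$ is an $R_1$-module and $\m_2$ annihilates $M_1$ (the $R$-action on $M_1$ factors through $\pi_1: R \to R_1$, whose kernel contains $\m_2$), the relation $\sum_j (g_j + h_j) x_j = 0$ in $M_1$ reduces to $\sum_j g_j x_j = 0$. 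Therefore the $\m_2$-parts $h_j$ are unconstrained, while the $g_j$ must form a syzygy of $M_1$ over $R_1$. This gives the direct-sum decomposition: the map $\sum_j (g_j + h_j) e_j \mapsto \big((g_j)_j, (h_j)_j\big)$ identifies $\Omega_1^R(M_1)$ with $\Omega_1^{R_1}(M_1) \oplus \bigoplus_{j=1}^n \m_2 e_j$, where the second factor is free on the generators $e_j$ with the $\m_2$-coefficients. Accounting for the grading, the $j$-th copy contributes $\m_2(-c_j)$ because $e_j$ sits in degree $c_j$. I would check that this identification is $R$-linear and degree-preserving, and that the decomposition is as $R$-modules (noting $\m_2$ is an $R$-module via $\pi_2$).

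The ``in particular'' clause then follows immediately: if $M_1$ has a minimal generator in degree $d$, say $x_{j_0}$ with $c_{j_0} = d$, then the summand $\m_2(-d)$ appears, and since $\m_2$ is minimally generated in degree $1$ (as $R_2$ is standard graded and $R_2 \not\simeq \sk$), $\m_2(-d)$ has a minimal generator in degree $d+1$. One must verify this generator is not absorbed into $\m \Omega_1^R(M_1)$, i.e.\ that the decomposition is a direct sum of $R$-modules and the generators of distinct summands remain minimal — this is where I would be slightly careful, but it is immediate from the summand being free-like over $\m_2$ and the splitting being $R$-linear.

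For part (b), I would invoke the general fact that first syzygies are additive over direct sums: a minimal free cover of $M_1 \oplus M_2$ is the direct sum of minimal free covers, so $\Omega_1^R(M_1 \oplus M_2) \simeq \Omega_1^R(M_1) \oplus \Omega_1^R(M_2)$. Applying part (a) to each summand (using its $R_2$-analogue for $M_2$, which holds by the symmetric roles of $R_1$ and $R_2$) gives
\[
\Omega_1^R(M_1 \oplus M_2) \simeq \Big(\Omega_1^{R_1}(M_1) \oplus \bigoplus_{j=1}^n \m_2(-c_j)\Big) \oplus \Big(\Omega_1^{R_2}(M_2) \oplus \bigoplus_{j=1}^m \m_1(-c_j')\Big),
\]
which after rearranging is exactly the claimed formula. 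The main obstacle, such as it is, lies entirely in part (a): correctly decoupling the $\m_1$- and $\m_2$-parts of a syzygy and confirming that the $\m_2$-coefficients are genuinely free (unconstrained). Everything else is formal.
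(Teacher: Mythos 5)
Your proposal is correct and takes essentially the same route as the paper's proof: the same minimal free cover $F=\bigoplus_{j=1}^n Re_j$, the same splitting of a syzygy $\sum_j(g_j+h_j)e_j$ into its $\m_1$- and $\m_2$-parts using that $\m_2$ annihilates $M_1$ (so the $h_j$ are unconstrained and the $g_j$ form an $R_1$-syzygy), and part (b) deduced from part (a) via additivity of first syzygies over direct sums. The only difference is that you spell out details the paper leaves implicit (minimality of the cover over $R$, and why $\m_2(-d)$ contributes a minimal generator in degree $d+1$), all of which you handle correctly.
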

 \begin{proof}
 Note that (b) follows from (a). We now prove (a). 
 
 Let $F= \bigoplus_{j=1}^{n} Re_j$ with $\deg(e_j)=c_j$, and $\varphi:F \to M$ be an $R$-linear map given by $\varphi(e_i) = x_i$. 
 Note that $\m=\m_1\oplus \m_2$, and $\m_2$ acts trivially on $M$. Thus, $ \Omega_1^{R_1}(M) \oplus \left(\bigoplus_{j=1}^{n} \m_2e_j\right)\subset  \Omega_1^R(M)$.  
 
 To prove the other inclusion, let $ \sum_{j} (g_j+h_j)e_j\in \Omega_1^R(M)$, where $g_j\in \m_1 $ and $h_j\in \m_2$. Since $\sum_jh_je_j\in \left(\bigoplus_{j=1}^{n} \m_2e_j\right)\in \Omega_1^R(M)$, we see that $\sum_jg_je_j\in \Omega_1^R(M)$. Therefore, $\sum_jg_jx_j=0$, and hence $\sum_jg_je_j\in \Omega_1^{R_1}(M)$.

 Finally, the second part of (a) follows by the choice of $c_j$.
 \end{proof}

The following proposition relates the syzygies of an $R_1$-module $M$ to those of $M$ as an $R$-module.

 \begin{prop}\label{FieldSyzygyLemma}
 Let $R_1$ and $R_2$ be standard graded $\sk$-algebras, $R= R_1 \times_{\mathsf k} R_2$, and $M$ be a finitely generated graded nonzero $R_1$-module. Then 
\[ \Omega^R_i(M) \simeq  \Omega^{R_1}_i(M) \oplus \Omega_i' \oplus \Omega_i'',\]
where 
 $$\Omega_i'\simeq  \bigoplus_{j=1}^i  \bigoplus_{k \in \mathbb{Z}}\Omega^{R_2}_j(\sk)^{a_{j,k}}(-d_{j,k})  \text{\rm \quad and \quad } \Omega_i''\simeq  \bigoplus_{j=1}^{i-1} \bigoplus_{k \in \mathbb{Z}}\Omega^{R_1}_j(\sk)^{b_{j,k}}(-d'_{j,k})$$
 for some $a_{j,k}\geq 0, b_{j,k}\geq 0,$   $d_{j,k}, d'_{j,k} \in \BZ$.
  \end{prop}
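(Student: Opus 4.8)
The plan is to prove the decomposition by induction on $i$, using Lemma \ref{first_syzygy_for_direct_sum} as the engine that converts an $R_1$-module (or $\sk$-vector space) into the next syzygy over $R$. The base case $i=1$ is precisely Lemma \ref{first_syzygy_for_direct_sum}(a): it gives
\[ \Omega_1^R(M) \simeq \Omega_1^{R_1}(M) \oplus \Bigl( \bigoplus_{j=1}^n \m_2(-c_j) \Bigr), \]
and since $\m_2 = \Omega_1^{R_2}(\sk)$, the summand $\bigoplus_j \m_2(-c_j)$ already has the form prescribed for $\Omega_1'$ (with $\Omega_1''=0$, consistent with the empty sum $\bigoplus_{j=1}^{0}$).

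\textbf{Inductive step.}
Assume the decomposition holds for $i-1$, so that
\[ \Omega_{i-1}^R(M) \simeq \Omega_{i-1}^{R_1}(M) \oplus \Omega_{i-1}' \oplus \Omega_{i-1}'', \]
where $\Omega_{i-1}'$ is a direct sum of shifts of $\Omega_j^{R_2}(\sk)$ and $\Omega_{i-1}''$ is a direct sum of shifts of $\Omega_j^{R_1}(\sk)$. Since $\Omega_i^R(M) \simeq \Omega_1^R(\Omega_{i-1}^R(M))$, I would apply $\Omega_1^R(-)$ to each summand separately (first syzygies of a direct sum split as the direct sum of first syzygies). The key observation is that \emph{every} summand appearing in $\Omega_{i-1}^R(M)$ is either an $R_1$-module or an $R_2$-module: the $R_1$-part consists of $\Omega_{i-1}^{R_1}(M)$ together with the shifts of $\Omega_j^{R_1}(\sk)$ from $\Omega_{i-1}''$, while the $R_2$-part consists of the shifts of $\Omega_j^{R_2}(\sk)$ from $\Omega_{i-1}'$. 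For an $R_1$-module summand $N$ with generators in degrees $\{d\}$, Lemma \ref{first_syzygy_for_direct_sum}(a) yields
\[ \Omega_1^R(N) \simeq \Omega_1^{R_1}(N) \oplus \Bigl( \bigoplus_d \m_2(-d) \Bigr), \]
contributing an $R_1$-module piece $\Omega_1^{R_1}(N)$ and new copies of $\m_2=\Omega_1^{R_2}(\sk)$ to $\Omega_i'$. Symmetrically, each $R_2$-module summand contributes an $R_2$-module piece together with new copies of $\m_1=\Omega_1^{R_1}(\sk)$ feeding into $\Omega_i''$. Bookkeeping the indices, $\Omega_1^{R_1}(\Omega_{i-1}^{R_1}(M)) = \Omega_i^{R_1}(M)$ regenerates the first summand; applying $\Omega_1^{R_1}$ to a shift of $\Omega_j^{R_1}(\sk)$ gives a shift of $\Omega_{j+1}^{R_1}(\sk)$, raising the index by one so that the range of $j$ in $\Omega_i''$ extends from $i-2$ up to $i-1$; and the newly created $\m_2$-summands are the $j=1$ term while the $\Omega_1^{R_2}$ applied to the old $\Omega_j^{R_2}(\sk)$ raise those to index $j+1\le i$. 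This matches the claimed ranges $1\le j\le i$ for $\Omega_i'$ and $1\le j\le i-1$ for $\Omega_i''$.

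\textbf{Main obstacle.}
The substantive difficulty is not the algebra of splitting first syzygies but the \emph{minimality} bookkeeping needed to guarantee that these syzygy modules over $R$ coincide (up to isomorphism) with the minimal syzygies, so that the formula genuinely describes $\Omega_i^R(M)$ and no spurious free summands creep in. One must verify at each stage that applying Lemma \ref{first_syzygy_for_direct_sum}(a) to a summand uses a \emph{minimal} generating set, which is where the hypothesis that $M$ and the $\Omega_j^{R_\ell}(\sk)$ are minimally generated is used, and that the $\m_2$-syzygies genuinely split off (the degree shift in the ``in particular'' clause of Lemma \ref{first_syzygy_for_direct_sum}(a) is what prevents cancellation against the free cover). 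A clean way to handle this is to track the exact degree shifts $d_{j,k}, d'_{j,k}$ abstractly as nonnegative multiplicities $a_{j,k}, b_{j,k}$, absorbing all shift data into the indexing and simply verifying that the construction produces a minimal resolution of $R$ by checking $\Omega_i^R(M)\subseteq \m F$ holds at each application; the precise numerical values of the shifts are irrelevant to the stated isomorphism and can be left implicit.
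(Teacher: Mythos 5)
Your proposal is correct and follows essentially the same route as the paper's proof: induction on $i$ with Lemma \ref{first_syzygy_for_direct_sum} as the engine, splitting the previous syzygy into its $R_1$-part (including $\Omega_{i-1}''$) and $R_2$-part, and shifting the syzygy indices of $\sk$ up by one while the new $\m_1$- and $\m_2$-summands supply the $j=1$ terms. The only cosmetic difference is that you apply part (a) summand by summand where the paper groups the summands by ring and invokes part (b) once; your minimality concern is already handled by the lemma itself, since minimal resolutions of direct sums are direct sums of minimal resolutions.
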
\begin{proof} 
 
 Let \[\BF_\bullet : \cdots \xrightarrow[]{} F_n \xrightarrow[]{\phi_n} F_{n-1} \xrightarrow[]{\phi_{n-1}} \cdots \xrightarrow[]{\phi_1}F_0 \xrightarrow[]{\phi_0} M \xrightarrow[]{} 0\]
 be a minimal graded free resolution of $M$.

 By Lemma \ref{first_syzygy_for_direct_sum}(a), the result is true for $i=1$. Let $i\geq 1$, and inductively assume that  
 \[\Omega^R_i(M) \simeq \Omega^{R_1}_i(M)     \oplus \Omega_i' \oplus \Omega_i{''},\]
 where $\Omega_i'$ and $\Omega_i{''}$ are as in the statement. Hence, $\Omega_{i+1}^R(M) \simeq \Omega_1^R(\Omega_i^{R_1}(M) \oplus \Omega_i'') \oplus \Omega_1^{R}(\Omega_i')$.
 
Let $\{x_1,\ldots,x_n\}$ and  $\{y_1,\ldots,y_m\}$ be minimal generating sets of $\Omega_{i}^{R_1}(M) \oplus \Omega_i''$ and $\Omega_i'$, respectively. Suppose that $c_j=\deg(x_j)$ and $c_j'=\deg(y_j)$. 
Then by Lemma \ref{first_syzygy_for_direct_sum}(b), we get  
\[\Omega_{i+1}^R(M) \simeq \Omega_{i+1}^{R_1}(M) \oplus \Omega_1^{R_1}(\Omega_i'') \oplus \left(\bigoplus\limits_{i=1}^n\m_2(-c_j)\right) \oplus \Omega_1^{R_2}(\Omega_i')\oplus\left( \bigoplus\limits_{i=1}^m\m_1(-c_j')\right).\]
  
 Note that $\Omega_1^{R_1}(\Omega_i'')\simeq \left(
        \bigoplus_{j=1}^{i-1} \bigoplus_{k \in \mathbb{Z}}\Omega^{R_1}_{j+1}(\sk)^{b_{j,k}}(-d'_{j,k})       \right) $. We can write $\Omega_1^{R_2}(\Omega_i')$ similarly. 
        
     The result now follows since $\Omega_1^{R_1}(\sk)\simeq\m_1$ and $\Omega_1^{R_2}(\sk)\simeq \m_2$.
 \end{proof}

\begin{rmk}\label{rmk:Omegas}
    Let the notation be as in Proposition \ref{FieldSyzygyLemma} and its proof.
    
    \begin{enumerate}[{\rm a)}]
        \item If $$\Omega_i'\simeq  \bigoplus_{j=1}^i  \bigoplus_{k \in \mathbb{Z}}\Omega^{R_2}_j(\sk)^{a_{j,k}}(-d_{j,k})  \text{\rm \quad \text{ and} \quad } \Omega_i''\simeq  \bigoplus_{j=1}^{i-1} \bigoplus_{k \in \mathbb{Z}}\Omega^{R_1}_j(\sk)^{b_{j,k}}(-d'_{j,k})$$

    then 
     $$\Omega_{i+1}'\oplus \Omega_{i+1}''  \simeq    \left( 
                 \bigoplus_{j=1}^{i}  \bigoplus_{k \in \mathbb{Z}}\Omega^{R_2}_{j+1}(\sk)^{a_{j,k}}(-d_{j,k})    \right) \bigoplus   \left(
                 \bigoplus_{j=1}^{i-1} \bigoplus_{k \in \mathbb{Z}}\Omega^{R_1}_{j+1}(\sk)^{b_{j,k}}(-d'_{j,k})       \right)   $$
                 $$\qquad\qquad\qquad\qquad\qquad\qquad\qquad\qquad\qquad \bigoplus \left( \bigoplus_{j=1}^{n} \m_2{(-c_{j}))} \right) \bigoplus 
                 \left( \bigoplus_{j=1}^{m} \m_1(-c'_j) \right).$$
\item Moreover, if $\Omega_i^R(M)$ has a minimal generator in degree $d$, then $\m_1(-d)$ or $\m_2(-d)$ is a direct summand of $\Omega_{i+1}'\oplus \Omega_{i+1}''$.  In particular, $\Omega_{i+1}'\oplus \Omega_{i+1}''$, and hence $\Omega_{i+1}^R(M)$ have minimal generators in degree $d+1$. Thus, $\beta^{R_1}_{i,d}(M)\neq 0$, then $\beta^R_{i+j,d+j}(M) \neq 0$ for all $j \geq 1$.
    \end{enumerate}         
\end{rmk}

Another immediate consequence of Proposition \ref{FieldSyzygyLemma} is the following.
\begin{cor}\label{Cor-inequality-of-reg}
Let $R_1$ and $R_2$ be standard graded $\sk$-algebras, $R= R_1 \times_{\mathsf k} R_2$, and $M$ be a finitely generated graded  $R_1$-module. Then $\Omega^{R_1}_i(M)\mid \Omega^R_i(M)$ for each $i$, and as a consequence, we have 
\begin{enumerate}[{\rm a)}]
\item $\reg_R(M) \geq \reg_{R_1}(M)$, and 
\item if $M$ has a linear resolution over $R$, then it has a linear resolution over $R_1$.
\end{enumerate}
\end{cor}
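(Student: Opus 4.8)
The plan is to read off everything from the decomposition furnished by Proposition \ref{FieldSyzygyLemma}. That proposition gives $\Omega^R_i(M) \simeq \Omega^{R_1}_i(M) \oplus \Omega_i' \oplus \Omega_i''$ for every $i$, which immediately exhibits $\Omega^{R_1}_i(M)$ as a direct summand of $\Omega^R_i(M)$; this is precisely the divisibility $\Omega^{R_1}_i(M) \mid \Omega^R_i(M)$. The two numbered consequences will then follow once I compare the graded Betti numbers computed over $R_1$ with those computed over $R$.

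The crux is an observation about changing the base ring. If $N$ is any finitely generated graded $R_1$-module, viewed as an $R$-module via the projection $R \to R_1$, then $\m_2$ annihilates $N$, so that $\m N = \m_1 N$ because $\m = \m_1 \oplus \m_2$ (Remark \ref{rmk:basicProperitesOfFibreProducts}(b)). Consequently $N/\m N = N/\m_1 N$ as graded $\sk$-vector spaces, and the minimal number of generators of $N$ in each degree is the same whether $N$ is regarded as an $R_1$-module or as an $R$-module. Applying this to $N = \Omega^{R_1}_i(M)$, the degree-$j$ minimal generators of this summand number exactly $\beta^{R_1}_{i,j}(M)$. Since minimal generators are additive over direct sums, the decomposition above yields $\beta^R_{i,j}(M) \geq \beta^{R_1}_{i,j}(M)$ for all $i,j$; in particular $\beta^{R_1}_{i,j}(M) \neq 0$ implies $\beta^R_{i,j}(M) \neq 0$.

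From this support containment both parts are short. For (a), taking the supremum of $j-i$ over the two nested supports gives $\reg_{R_1}(M) \leq \reg_R(M)$ directly from the definition of regularity. For (b), first note that $M$ is generated in a single degree $\delta_0$ over $R_1$ if and only if it is over $R$, again because $M/\m M = M/\m_1 M$; then a linear resolution over $R$ means $\beta^R_{i,j}(M) \neq 0 \Rightarrow j = \delta_0 + i$, and the inequality $\beta^{R_1}_{i,j}(M) \leq \beta^R_{i,j}(M)$ forces the same implication over $R_1$, which is exactly the statement that $M$ has a linear resolution over $R_1$.

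There is no genuine obstacle here: everything is a formal consequence of Proposition \ref{FieldSyzygyLemma}. The only point that deserves care, and the one I would flag explicitly, is the invariance of the graded minimal generators when an $R_1$-module is regarded as an $R$-module, since the entire comparison of Betti tables rests on it.
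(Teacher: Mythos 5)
Your proposal is correct and follows exactly the paper's route: the paper states this corollary as an immediate consequence of Proposition \ref{FieldSyzygyLemma}, whose decomposition $\Omega^R_i(M) \simeq \Omega^{R_1}_i(M) \oplus \Omega_i' \oplus \Omega_i''$ gives the divisibility, and the Betti-number comparison then yields (a) and (b) just as you argue. Your explicit justification that minimal generators of an $R_1$-module are unchanged when it is viewed as an $R$-module (since $\m N = \m_1 N$) is the one point the paper leaves tacit, and it is the right point to flag.
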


We now study the syzygies of a finitely generated graded module $M$ over $R= R_1 \times_{\mathsf k} R_2$. In particular, in Corollary \ref{cor:splitSyzygy}, we see that $\Omega^R_2(M)$ decomposes as a direct sum of modules over $R_1$ and $R_2$, respectively. We first prove the following technical results.

\begin{lemma}{\label{kernel_of_pi_1}} Let $R_1$ and $R_2$ be standard graded $\sk$-algebras, $R= R_1 \times _{\mathsf k} R_2$, $F$ be a free $R$-module of finite rank, and $\pi_1$ and $\pi_2$ be as in Remark \ref{notation:FP}. Let $N \subset \m F$ be a submodule, with a minimal generating set $\{x_1,\ldots,x_r, y_1,\ldots,y_s, z_1, \ldots, z_t\}$, where $x_i \in \m_1F$, $y_j \in \m_2F$, and $z_k \in \m F \setminus (\m_1F \cup \m_2F)$ are such that $t$ is minimum. Then, with $\underline x = x_1, \ldots, x_r$, $\underline y = y_1, \ldots, y_s$, and $\underline z = z_1, \ldots, z_t$, we have:
\begin{enumerate}[{\rm a)}]
\item $\langle \underline y\rangle \subset \ker(\pi_1) \cap N \subset \langle \underline y\rangle + \m N$, and $\langle \underline x\rangle \subset \ker(\pi_2) \cap N \subset \langle \underline x \rangle + \m N$.
\item $\pi_1(N)$ and $\pi_2(N)$ are minimally generated by $\{ \underline x, \pi_1(\underline z) \}$, and $\{ \underline y, \pi_2(\underline z)\}$ respectively.
\end{enumerate}
\end{lemma}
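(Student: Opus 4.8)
The plan is to prove the minimal-generation statement (b) first, and then to deduce (a) from it. Throughout I would use the two defining features of the fibre product: the maximal ideal decomposes as $\m = \m_1 \oplus \m_2$ with $\m_1 \m_2 = 0$, which gives the identifications $\ker(\pi_1) = \m_2 F$ and $\ker(\pi_2) = \m_1 F$, and which (since $F$ is free) also gives $\m_1 F \cap \m_2 F = 0$. I would also record two elementary translation facts: $\pi_1(\m_1 N) = \m_1\, \pi_1(N)$, and for $a \in R$ one has $\pi_1(a) \in \m_1$ if and only if $a \in \m$. These let me pass between relations over $R$ and relations over $R_1$ (and symmetrically for $\pi_2$).

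For (b), generation is immediate: applying $\pi_1$ to the generators of $N$ annihilates $\underline y \subseteq \m_2 F = \ker(\pi_1)$ and fixes $\underline x \subseteq \m_1 F$, so $\{\underline x, \pi_1(\underline z)\}$ generates $\pi_1(N)$ over $R_1$. The real content is minimality, and this is exactly where the hypothesis that $t$ is minimal enters; I expect this to be the \emph{main obstacle}. I would argue by contradiction. If $\{\underline x, \pi_1(\underline z)\}$ is not minimal, there is a nontrivial $\sk$-linear relation $\sum_i \lambda_i \pi_1(x_i) + \sum_k \mu_k \pi_1(z_k) \in \m_1\, \pi_1(N)$. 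If some $\mu_{k_0} \neq 0$, I would lift this relation to $R$: writing its right-hand term as $\pi_1(\zeta)$ with $\zeta \in \m_1 N$, the element $z_{k_0}' = z_{k_0} + \sum_i \lambda_i x_i + \sum_{k \neq k_0} \mu_k z_k - \zeta$ (after normalizing $\mu_{k_0}=1$) has $\pi_1(z_{k_0}') = 0$, hence lies in $\m_2 F$. A short graded-Nakayama argument applied to $N/\langle S'\rangle$ (using that $\zeta \in \m N$) shows that replacing $z_{k_0}$ by $z_{k_0}'$ produces another minimal generating set $S'$ of $N$, but now with one fewer mixed generator — contradicting the minimality of $t$. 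If instead every $\mu_k = 0$, then some $\lambda_{i_0} \neq 0$; writing the relation as $\pi_1(\sum_i \lambda_i x_i) = \pi_1(\zeta)$ with $\zeta \in \m_1 N$, the difference $\sum_i \lambda_i x_i - \zeta$ lies in $\m_1 F \cap \m_2 F = 0$, so $\sum_i \lambda_i x_i = \zeta \in \m N$, contradicting that $\{\underline x, \underline y, \underline z\}$ is a minimal generating set of $N$. Thus $\{\underline x, \pi_1(\underline z)\}$ is minimal; the statement for $\pi_2$ is the same with $R_1 \leftrightarrow R_2$.

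For (a), the inclusions $\langle \underline y\rangle \subseteq \ker(\pi_1) \cap N$ and $\langle \underline x\rangle \subseteq \ker(\pi_2) \cap N$ are clear, since each $y_j \in \m_2 F \cap N = \ker(\pi_1) \cap N$ and each $x_i \in \m_1 F \cap N = \ker(\pi_2)\cap N$. For the reverse inclusion $\ker(\pi_1) \cap N \subseteq \langle \underline y\rangle + \m N$, I would take $w$ in the left-hand side and write $w = \sum_i a_i x_i + \sum_j b_j y_j + \sum_k c_k z_k$. Applying $\pi_1$ and using $\pi_1(w) = 0 = \pi_1(y_j)$ gives the relation $\sum_i \pi_1(a_i)\pi_1(x_i) + \sum_k \pi_1(c_k)\pi_1(z_k) = 0$ among the minimal generators of $\pi_1(N)$ supplied by (b). Minimality forces every coefficient into $\m_1$, so $\pi_1(a_i), \pi_1(c_k) \in \m_1$, whence $a_i, c_k \in \m$ by the translation fact above. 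Therefore $\sum_i a_i x_i + \sum_k c_k z_k \in \m N$, and consequently $w \in \langle \underline y\rangle + \m N$. The inclusion involving $\pi_2$ follows by symmetry.

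The step I expect to be hardest is the minimality argument in (b): one must convert a failure of minimality of $\pi_1(N)$ into an actual reduction of the number $t$ of genuinely mixed generators of $N$. The delicate point is checking, via graded Nakayama, that the modified set $S'$ still generates $N$, even though the correction term $\zeta$ is only known to lie in $\m N$ rather than being directly expressible in the new generators.
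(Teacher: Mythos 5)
Your proof is correct and runs on the same engine as the paper's: the exchange argument in which a unit (nonzero scalar) coefficient on some mixed generator $z_{k}$ lets you replace it by a generator lying in $\m_2F$, contradicting the minimality of $t$, together with the identifications $\ker(\pi_1)=\m_2F$, $\m_1F\cap\m_2F=0$, and the lifting of coefficients between $R$ and $R_1$. The only difference is organizational: you prove (b) first and deduce (a) from it by lifting a relation among the projected generators, whereas the paper proves (a) first (recording a slightly stronger form of it as a remark) and deduces (b) by the mirror-image lifting argument, so this is a repackaging of the same proof rather than a different route.
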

\begin{proof}
Write $z_k = x_{r+k} + y_{s+k}$, where $x_{r+k} \in \m_1F \setminus \{0\}$, and $y_{s+k} \in \m_2F \setminus \{0\}$ for $k \in \{1,\ldots, t\}$. Note that $\pi_1(z_k) = x_{r+k}$, and $\pi_2(z_k) = y_{s+k}$ for all $k$. Moreover, if $(a,b) \in R$, then $(a,b)x_i=ax_i$, $(a,b)y_j=by_j$, and $(a,b)z_k=ax_{r+k}+by_{s+k}$.

a) Clearly, by the definition of $\pi_1$, we have $\langle \underline y \rangle \subset \ker(\pi_1) \cap N$. For $(a_i, a_i'), (b_j, b_j'), (c_k, c_k') \in R$, suppose $\sum \limits_{i = 1}^r(a_i, a_i')x_i + \sum \limits_{j = 1}^s(b_j, b_j')y_j + \sum \limits_{k = 1}^t(c_k, c_k')z_k \in \ker(\pi_1) \cap N$. Applying $\pi_1$, we get $\sum \limits_{i=1}^{r} a_ix_i + \sum\limits_{k=1}^t c_{k}x_{r+k} =0$. 

We first claim that for all $k \in \{1, \ldots, t\}$, we have $c_{k} \in \m_1$. If not, without loss of generality, we may assume that $c_{t} \in R_1 \setminus \m_1$, i.e., $c_{t}$ is a unit in $R_1$. Note that $(c_t, c_t') \in R$ forces $c_t' \in R_2 \setminus \m_2$. 

Write $x_{r+t} = -c_t^{-1}\left(\sum\limits_{i = 1}^r a_ix_i + \sum\limits_{k = 1}^{t - 1} c_kx_{r+ k}\right)$. Then $\{\underline x, \underline y, z_1,\ldots, z_{t-1}, y\}$ is a minimal generating set of $M$, where $y = z_t + (c_t,c_t')^{-1} \left(\sum \limits_{i = 1}^r (a_i,a_i') x_i + \sum\limits_{k = 1}^{t - 1} (c_k, c_k') z_{k} \right) \in \m_2 F$. This contradicts the minimality of $t$.

Thus, $c_k \in \m_1$ for all $k$, and hence $(c_k, 0) \in R$ for all $k$. Therefore, $\sum \limits_{i=1}^{r} a_ix_i + \sum\limits_{k=1}^t c_{k}x_{r+k} =0$ in $\pi_1(F)$ implies that $\sum\limits_{i=1}^r (a_i,a_i')x_i + \sum\limits_{k=1}^t (c_k,0)z_k \in \ker(\pi_1)$. Also, since $a_i'x_i=0$ for all $i$, we see that  $\sum\limits_{i=1}^r (a_i,a_i')x_i + \sum\limits_{k=1}^t (c_k,0)z_k \in \ker(\pi_2)$.
Thus, we get $\sum\limits_{i=1}^r (a_i,a_i')x_i + \sum\limits_{k=1}^t (c_k,0)z_k = 0$ in $F$.

Since $\underline x, \underline z$ is a part of a minimal generating set of $N$, this forces $(a_i, a_i') \in \m$. 
Finally, $c_k \in \m_1$ forces $c_k' \in \m_2$, and hence $(c_k, c_k') \in \m$ for all $k$. 

Thus, $\sum \limits_{i = 1}^r(a_i, a_i')x_i +  \sum \limits_{k = 1}^t(c_k, c_k')z_k \in \m N$, completing the proof of (a), as the second part follows similarly.

b) It is clear that $\{ \underline x, \pi_1(\underline z) \}$, and $\{ \underline y, \pi_2(\underline z)\}$ are generating sets of $\pi_1(N)$ and $\pi_2(N)$, respectively. In order to prove that they are minimal, it is enough to show it in the former case, as the latter follows similarly. 

Suppose $\sum \limits_{i=1}^{r+t} a_ix_i \in\m_1\pi_1(N)$ for some $a_i \in R_1$. We want to prove that $a_i \in \m_1$ for all $i$. Write $\sum \limits_{i=1}^{r+t} a_ix_i = \sum\limits_{i = 1}^{r+t} b_i x_i$ for some $b_i \in \m_1$, i.e., $\sum \limits_{i=1}^{r+t} (a_i - b_i) x_i = 0$. Choose $a_i', b_i' \in R_2$ such that $(a_i,a_i')$, $(b_j, b_j') \in R$. Observe that $b_j \in \m_1$ implies that $b_j' \in \m_2$, and hence $(b_j,b_j') \in \m$.

Then $\sum\limits_{i=1}^r (a_i-b_i, a_i'-b_i')x_i + \sum\limits_{k=r+1}^{r+t} (a_k - b_k, a_k' - b_k')z_k \in \ker(\pi_1)$. The proof is complete by the following remark.
\end{proof}

\begin{rmk}{\rm
The proof of (a) in the above lemma yields the following stronger statement:\\ 
If $\sum \limits_{i = 1}^r(a_i, a_i')x_i + \sum \limits_{k = 1}^t(c_k, c_k')z_k \in \ker(\pi_1)$, then $(a_i, a_i'), (c_k,c_k') \in \m$, and hence $a_i, c_k \in \m_1$.
}\end{rmk}

The following proposition describes the second syzygy of an $R$-module, in terms of related modules over $R_1$ and $R_2$.

\begin{prop}{\label{first_syzygy}} Let $R_1$ and $R_2$ be standard graded $\sk$-algebras, $R= R_1 \times_{\mathsf k} R_2$, and $N$ be a finitely generated graded $R$-module. Suppose $F$ is a free $R$-module of finite rank, and $N \subset \m F$.  Then with $\pi_1, \pi_2$ as in Remark \ref{notation:FP}, we have
 $$ \Omega_1^R(N) \simeq \left(\bigoplus_{i=1}^r\m_2({d_i})\right)\oplus\Omega_1^{R_1}(\pi_1(N)) \oplus  \Omega_1^{R_2}(\pi_2(N)) \oplus \left(\bigoplus_{j=1}^s\m_1({d_j'})\right) ,$$
where $r = \mu(N)- \mu(\pi_2(N))$, $s=\mu(N)- \mu(\pi_1(N))$, $d_i=\deg(x_i)$ for $1\leq i \leq r$, and $d_j'=\deg(y_j)$ for $1 \leq j \leq s$, with $x_i, y_j$ as in Lemma \ref{kernel_of_pi_1}.
 \end{prop}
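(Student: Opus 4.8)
The plan is to identify $\Omega_1^R(N)$ as the kernel of a minimal free cover of $N$ and to split off, separately, the contributions of $\pi_1(N)$, of $\pi_2(N)$, and of the ``trivial'' syzygies coming from the annihilation of $\m_1F$ by $\m_2$ and of $\m_2F$ by $\m_1$. Keeping the notation of Lemma \ref{kernel_of_pi_1}, write $z_k=x_{r+k}+y_{s+k}$ with $x_{r+k}\in\m_1F$ and $y_{s+k}\in\m_2F$, and let
\[ G=\bigoplus_{i=1}^r R\,e_i\oplus\bigoplus_{j=1}^s R\,f_j\oplus\bigoplus_{k=1}^t R\,g_k,\qquad \deg e_i=d_i,\ \deg f_j=d_j',\ \deg g_k=\deg z_k, \]
map minimally onto $N$ via $e_i\mapsto x_i$, $f_j\mapsto y_j$, $g_k\mapsto z_k$, so that $\Omega_1^R(N)=\ker(G\to N)$. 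For $w=\sum_i(a_i,a_i')e_i+\sum_j(b_j,b_j')f_j+\sum_k(c_k,c_k')g_k$, the identities $(a,b)x_i=ax_i$, $(a,b)y_j=by_j$, $(a,b)z_k=ax_{r+k}+by_{s+k}$ show that the image of $w$ in $F$ lands in $\m_1F\oplus\m_2F$ and decouples. Hence $w\in\Omega_1^R(N)$ if and only if the two relations
\[ \sum_i a_ix_i+\sum_k c_kx_{r+k}=0\ \text{ in }\m_1F,\qquad \sum_j b_j'y_j+\sum_k c_k'y_{s+k}=0\ \text{ in }\m_2F \]
hold separately.

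By Lemma \ref{kernel_of_pi_1}(b), $\{x_i,\pi_1(z_k)\}$ minimally generates $\pi_1(N)$ and $\{y_j,\pi_2(z_k)\}$ minimally generates $\pi_2(N)$, so the two displayed relations say precisely that $(a_1,\dots,a_r,c_1,\dots,c_t)\in\Omega_1^{R_1}(\pi_1(N))$ and $(b_1',\dots,b_s',c_1',\dots,c_t')\in\Omega_1^{R_2}(\pi_2(N))$. This yields a graded $R$-linear map $\rho\colon\Omega_1^R(N)\to\Omega_1^{R_1}(\pi_1(N))\oplus\Omega_1^{R_2}(\pi_2(N))$. I would first compute its kernel: if $\rho(w)=0$ then $a_i=c_k=0$ in $R_1$ and $b_j'=c_k'=0$ in $R_2$, which forces $w=\sum_i(0,a_i')e_i+\sum_j(b_j,0)f_j$ with $a_i'\in\m_2$ and $b_j\in\m_1$; every such $w$ does lie in $\Omega_1^R(N)$ since $(0,a_i')x_i=0$ and $(b_j,0)y_j=0$. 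Thus $\ker\rho=\bigoplus_i\m_2e_i\oplus\bigoplus_j\m_1f_j$, which are the twisted copies of $\m_2$ and $\m_1$ occurring in the statement, with minimal generators in degrees $d_i+1$ and $d_j'+1$ respectively.

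The crux is to show that $\rho$ is surjective and split. Given $\alpha=(a_i,\gamma_k)\in\Omega_1^{R_1}(\pi_1(N))$ and $\beta=(b_j',\delta_k)\in\Omega_1^{R_2}(\pi_2(N))$, I would set $\sigma(\alpha,\beta)=\sum_i(a_i,0)e_i+\sum_j(0,b_j')f_j+\sum_k(\gamma_k,\delta_k)g_k$. The only point needing care — and it is the heart of the matter — is that $(\gamma_k,\delta_k)$ is a genuine element of $R$, i.e.\ $\epsilon_1(\gamma_k)=\epsilon_2(\delta_k)$. Here minimality is decisive: being kernels of \emph{minimal} free covers, $\Omega_1^{R_1}(\pi_1(N))$ has all coordinates in $\m_1$ and $\Omega_1^{R_2}(\pi_2(N))$ has all coordinates in $\m_2$, so $\gamma_k\in\m_1$ and $\delta_k\in\m_2$, whence $\epsilon_1(\gamma_k)=0=\epsilon_2(\delta_k)$ and the gluing succeeds (similarly $(a_i,0),(0,b_j')\in R$). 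A routine check then gives $\sigma(\alpha,\beta)\in\Omega_1^R(N)$, $\rho\circ\sigma=\mathrm{id}$, and that $\sigma$ is graded and $R$-linear (the $R$-action on $\Omega_1^{R_j}(\pi_j(N))$ being through $\pi_j$). Consequently the sequence $0\to\ker\rho\to\Omega_1^R(N)\xrightarrow{\rho}\Omega_1^{R_1}(\pi_1(N))\oplus\Omega_1^{R_2}(\pi_2(N))\to0$ is split exact, and combining this with the computation of $\ker\rho$ gives the asserted decomposition. I expect the main obstacle to be exactly this compatibility check: it is the interplay between the minimality of the $R_j$-syzygies (which places the shared coordinates $\gamma_k,\delta_k$ into the maximal ideals) and the defining pullback condition of $R=R_1\times_{\sk}R_2$ that makes the section $\sigma$ well defined.
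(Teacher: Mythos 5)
Your proof is correct and takes essentially the same route as the paper's: the same minimal free cover built from the generating set of Lemma \ref{kernel_of_pi_1}, the same decoupling of a relation into its $\m_1 F$ and $\m_2 F$ components, and the same use of minimality (via Lemma \ref{kernel_of_pi_1}(b)) to place the shared coordinates in $\m_1$ and $\m_2$ so that the gluing into $R$ is legitimate. Packaging the decomposition as a split short exact sequence with your $\rho$ and $\sigma$, rather than as the paper's two-sided inclusion giving an internal direct sum $\m_2G_1\oplus\ker(\phi_1)\oplus\ker(\phi_2)\oplus\m_1G_3$, is only a cosmetic difference.
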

 \begin{proof}
 Let $\{\underline x, \underline y, \underline z\}$ be a minimal generating set of $N$, with notation as in the previous lemma. Suppose $G_1$, $G_2$, $G_3$ are free $R$-modules with bases $\{e_1,\ldots, e_{r}\}$, $\{ f_1,\ldots,f_{t}\}$, $\{g_1,\ldots,g_{s}\}$ respectively, and $G= G_1 \oplus G_2 \oplus G_3$. Consider an $R$-module homomorphism $\phi: G\to N$ given by $$\phi(e_i)=x_i, \phi(f_k)= z_k = x_{r+k} + y_{s+k}, \text{ and } \phi(g_j)=y_j.$$ Then $\Omega_1^R(N) \simeq \ker(\phi) \subset \m G = \m_1 G_1 \oplus \m_2 G_1 \oplus \m_1 G_2 \oplus \m_2 G_2 \oplus \m_1 G_3 \oplus \m_2 G_3$. 

{\it Claim:} $\Omega_1^{R_1}(\pi_1(N))$ and $\Omega_1^{R_2}(\pi_2(N))$ can both be identified with submodules of $\ker(\phi)$. 

We prove the claim for $\Omega_1^{R_1}(\pi_1(N))$, and the other case is similar.
In order to prove this, recall that $\pi_1(N)$ is minimally generated by $\{x_1,\ldots, x_r, x_{r+1}, \ldots, x_{r+t}\}$, by Lemma \ref{kernel_of_pi_1}(b). Hence, $\pi_1(G_1 \oplus G_2)$ is a free $R_1$-module which maps minimally onto $\pi_1(N)$ by a map, say $\phi_1$, induced from $\phi$. Then $\Omega_1^{R_1}(\pi_1(N)) \simeq \ker(\phi_1) \subset \m_1\pi_1(G_1 \oplus G_2)$. Identifying the last module with $ \m_1G_1 \oplus \m_1G_2$ as in Remark \ref{notation:FP}, we now show that $\ker(\phi_1)\subset \ker(\phi)$.

Now, assume $(a_1,\ldots,a_r, b_1,\ldots,b_t)\in \ker(\phi_1)\subset  \m_1G_1 \oplus \m_1G_2$. Then $\sum_{i=1}^r a_ix_i + \sum_{k=1}^t b_k x_{r+k} =0$, and hence 
 $$\phi\left( \sum\limits_{i=1}^{r}(a_i,0)e_i + \sum\limits_{k=1}^{t} (b_k,0) f_k \right) = \sum\limits_{i=1}^{r}(a_i,0)x_i + \sum\limits_{k=1}^{t} (b_k,0) (x_{r+k}+y_{s+k})=0.$$ 
 Thus, $(a_1,\ldots, a_r, b_1, \ldots, b_t) \in \ker(\phi)$, and hence $\ker(\phi_1)\subset \ker(\phi)$, proving the claim. 
 
Thus, to prove the proposition, it is enough to show the following: 
\[\m_2G_1\oplus\ker(\phi_1) \oplus\ker(\phi_2) \oplus \m_1G_3= \ker(\phi).\] 

We have proved that $\ker(\phi_1)\oplus \ker(\phi_2)\subset \ker(\phi)$. Now we get $\phi(\m_2G_1) =\m_2 \langle x_1, \ldots, x_r\rangle = 0$. Similarly, since $\m_1y_j=0$, we get $\phi(\m_1G_3) = 0$. This proves 
\[\m_2G_1\oplus\Omega_1^{R_1}(\pi_1(N)) \oplus  \Omega_1^{R_2}(\pi_2(N)) \oplus \m_1G_3\subset \Omega_1^R(N).\] 

To prove the other inclusion, let 
 \[\sigma =\sum \limits_{i=1}^{r}(a_i+a_i')e_i + \sum \limits_{k=1}^{t} (b_{k}+b_k') f_k + \sum\limits_{j=1}^{s} (c_j+c_j') g_j \in \ker(\phi),\]
 where $a_i, b_k, c_j \in \m_1$, and $a_i', b_k', c_j' \in \m_2$. 
 
Thus, $\phi(\sigma)=0$ gives 
\[ \sum \limits_{i=1}^{r} (a_i + a_i')x_i + \sum \limits_{k=1}^{t} (b_{k}+b_k') z_k+ \sum\limits_{j=1}^{s} (c_j+c_j') y_j=0.\] 

Now, using $x_i \in \m_1 F$ for $i \in \{1, \ldots, r+t\}$, $y_j \in \m_2 F$ for $j \in \{1, \ldots, s+t\}$, $z_k = x_{r+k} + y_{s+k}$ for all $k$, and $\m_1 \m_2 = 0$, the above equation can be rewritten as 
\[ \left(\sum \limits_{i=1}^{r}a_i'x_i\right)+\left(\sum \limits_{i=1}^{r}a_ix_i + \sum \limits_{k=1}^{t} b_{k} x_{r+k}\right) + \left( \sum \limits_{k=1}^{t} b_{k}' y_{s+k}+ \sum\limits_{j=1}^{s} c_j' y_j\right) + \left(\sum \limits_{j=1}^{s}c_jy_j\right)=0, \]

where the first and the last terms are zero. Moreover, $\m F = \m_1 F \oplus \m_2 F$ forces the second and the third terms to be individually zero. Thus, we have

 \[\sum \limits_{i=1}^{r}a_i'x_i =0, \ \   \sum \limits_{i=1}^{r}a_ix_i + \sum \limits_{k=1}^{t} b_{k} x_{r+k}=0,\  \ \sum \limits_{k=1}^{t} b_{k}' y_{s+k}+ \sum\limits_{j=1}^{s} c_j' y_j=0,\ \ \sum \limits_{j=1}^{s}c_jy_j=0. \]
Therefore, we get that
\[\sigma =\left(\sum \limits_{i=1}^{r}a_i'e_i \right)+ \left(\sum\limits_{i=1}^{r}a_ie_i+\sum \limits_{k=1}^{t} b_{k} f_k\right) + \left(\sum\limits_{k=1}^{t}b_k'f_k +\sum\limits_{j=1}^{s} c_j' g_j\right) + \left(\sum \limits_{j=1}^{s}c_jg_j\right)\]
is in $ \m_2G_1\oplus\Omega_1^{R_1}(\pi_1(N)) \oplus  \Omega_1^{R_2}(\pi_2(N)) \oplus \m_1G_3$. 
This completes the proof.
 \end{proof}

Taking $N = \Omega_{i-1}^R(M)$ in Proposition \ref{first_syzygy}, as an immediate application, we get the following.
\begin{cor}\label{cor:splitSyzygy}
Let $R = R_1 \times_{\mathsf k} R_2$ be a fibre product of standard graded $\sk$-algebras, and $M$ be a finitely generated graded $R$-module. Then, for $i \geq 2$, $\Omega_i^R(M)$ splits as a direct sum of modules over $R_1$ and $R_2$, respectively.
\end{cor}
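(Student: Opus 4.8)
The plan is to deduce Corollary \ref{cor:splitSyzygy} directly from Proposition \ref{first_syzygy} by choosing the module $N$ appropriately. Since the corollary asserts a statement about $\Omega_i^R(M)$ for $i \geq 2$, and $\Omega_i^R(M) \simeq \Omega_1^R(\Omega_{i-1}^R(M))$, the natural move is to apply Proposition \ref{first_syzygy} with $N = \Omega_{i-1}^R(M)$. First I would note that for $i \geq 2$ we have $i - 1 \geq 1$, so $N = \Omega_{i-1}^R(M)$ is a genuine (first or higher) syzygy module, and in particular $N \subset \m F$ for the appropriate free module $F$ in the minimal resolution of $M$; this is precisely the hypothesis required by Proposition \ref{first_syzygy}. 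This containment in $\m F$ is the crucial point that justifies invoking the proposition, and it is exactly why the statement is restricted to $i \geq 2$ rather than $i \geq 1$ (for $i = 1$ one cannot in general assume $M$ itself sits inside $\m F$).

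Next I would simply read off the conclusion of Proposition \ref{first_syzygy}. Applying it to $N = \Omega_{i-1}^R(M)$ gives
\[
\Omega_i^R(M) \simeq \left(\bigoplus_{i=1}^r \m_2(d_i)\right) \oplus \Omega_1^{R_1}(\pi_1(N)) \oplus \Omega_1^{R_2}(\pi_2(N)) \oplus \left(\bigoplus_{j=1}^s \m_1(d_j')\right).
\]
The point is then to observe how this expression organizes into an $R_1$-part and an $R_2$-part. The summands $\Omega_1^{R_1}(\pi_1(N))$ and $\bigoplus_{j=1}^s \m_1(d_j')$ are modules over $R_1$ (recall $\m_1 \simeq \Omega_1^{R_1}(\sk)$ is an $R_1$-module), while $\Omega_1^{R_2}(\pi_2(N))$ and $\bigoplus_{i=1}^r \m_2(d_i)$ are modules over $R_2$. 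Grouping these two collections of summands exhibits $\Omega_i^R(M)$ as a direct sum of an $R_1$-module and an $R_2$-module, which is exactly the assertion of the corollary.

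Since this is labelled an immediate consequence, there is essentially no obstacle: the entire content lies in Proposition \ref{first_syzygy}, and the only thing to verify is the applicability hypothesis $N \subset \m F$. The one step requiring a word of care is confirming that $\Omega_{i-1}^R(M)$ does embed in $\m F$ for $i \geq 2$, which follows from the minimality of the chosen free resolution (by definition $\ker(\phi_{j}) \subset \m F_j$ for all $j$, so every syzygy module sits inside $\m$ times a free module). Once that is in place, the decomposition is just a regrouping of the four summands produced by the proposition, and no further computation is needed.
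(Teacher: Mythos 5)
Your proof is correct and takes exactly the same route as the paper, which likewise obtains the corollary by substituting $N = \Omega_{i-1}^R(M)$ into Proposition \ref{first_syzygy} and grouping the four summands into an $R_1$-part and an $R_2$-part. Your additional remark that minimality of the resolution guarantees the hypothesis $\Omega_{i-1}^R(M) \subset \m F$ (and that this is why $i \geq 2$ is needed) is precisely the point the paper leaves implicit.
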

 
\begin{rmk}\label{directSummandRemark}{\rm 
Let $R= R_1 \times_{\mathsf k} R_2$ be a fibre product of standard graded $\sk$-algebras. If $M$ is an $R$-module of infinite projective dimension, then, for $i \geq 3$, there exist $k\in \{1,2\}$ and $j \in \mathbb Z$ such that $\m_k(-j) \mid \Omega_i^R(M)$. This follows from Corollary \ref{cor:splitSyzygy}, and Lemma \ref{first_syzygy_for_direct_sum} (with $M$ replaced by $\Omega_{i-1}(M)$). In fact, we have the following stronger statement: 
       
If $\m_1\mid \Omega_3^R(M)$, then $\m_1\mid \Omega_{2i-1}^R(M)$ and $\m_2\mid \Omega_{2i}^R(M)$ for all $i\geq 2$, and vice versa.
}\end{rmk}
  
\begin{example}\hfill{}
 
\begin{enumerate}[{\rm a)}]
\item We first see an example in which neither $\m_1\mid \Omega_2^R(M)$ nor $\m_2\mid \Omega_2^R(M)$.

Let $R_1=\sk[X_1,X_2], R_2= \sk[Y_1,Y_2]$ and 
\[R=R_1 \times_{\mathsf{k}} R_2 \simeq \sk[X_1,X_2,Y_1,Y_2]/\langle X_1Y_1,X_2Y_1,X_1Y_2,X_2Y_2\rangle\] 
and $M=R/\langle X_1^2+Y_1^2, X_2^2+Y_2^2\rangle $. Then,  we have $\Omega_1^R(M)\simeq \langle X_1^2+Y_1^2, X_2^2+Y_2^2\rangle $. Therefore, in the notation of Proposition \ref{first_syzygy}, we have $r=s=0$ and 
\[\Omega^R_2(M)\simeq \Omega^{R_1}_2(R_1/\langle X_1^2,X_2^2\rangle)\oplus \Omega^{R_2}_2(R_2/\langle Y_1^2,Y_2^2\rangle).\]

Clearly $\m_k\nmid \Omega^R_2(M)$ for $k = 1,2$.

\item In this example, we see that for $i\geq 1$, there can exist exactly one $k\in\{ 1,2\}$ such that $\m_k\mid \Omega_i^R(M)$. 
    
Let $R= \sk[X,Y]/\langle XY \rangle$. Then $R \simeq \sk [X]\times_{\mathsf k} \sk [Y]$. Take $M=R/\langle X\rangle$. Then $\Omega_i^R(M) \simeq \langle X \rangle = \m_1$ when $i$ is odd, and  $\Omega_i^R(M) \simeq \langle Y \rangle = \m_2$ when $i$ is even.
\end{enumerate}
\end{example}

\section{Fibre Products and the Koszul Property}
This section focuses on the Koszul property of the fibre product $R=R_1 \times_{\mathsf k} R_2$. In Proposition \ref{Prop-prep-for-koszul}, we show that an $R_1$-module having pure resolutions over $R$ must have a linear resolution over $R$, and that the existence of such modules forces $R_1$ and $R_2$ to be Koszul. As applications of this, in Corollary \ref{KoszulCorollary} and Corollary \ref{UniversallyKoszulCorollary}, we get that $R$ is (universally) Koszul if and only if the same holds for both $R_1$ and $R_2$. We end this section by showing that the fibre product $R$ being Koszul is equivalent to the existence of module with infinite projective dimension and finite regularity. 

We begin with the following technical lemma.
\begin{lemma}\label{lemma:degreeofminimalgenerators}
Let $R_1$ and $R_2$ be Koszul algebras, $R= R_1 \times_{\mathsf k}R_2$, and $M$ be a finitely generated graded $R_1$-module. 
Then with the notation as in the Proposition \ref{FieldSyzygyLemma}, $\Omega_{i+1}' \oplus \Omega_{i+1}''$ has a minimal generator in degree $j+1$ if and only if $\Omega_i^R(M)$ has a minimal generator in degree $j$. 
\end{lemma}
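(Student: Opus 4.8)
The plan is to prove the two implications separately, noting that one is essentially already recorded and the other is where the Koszul hypothesis does its work. I write the degree in question as $j$ and reserve a fresh letter, say $p$, for the internal summation indices appearing in the decompositions of $\Omega_i'$ and $\Omega_i''$.

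The forward implication---if $\Omega_i^R(M)$ has a minimal generator in degree $j$, then $\Omega_{i+1}'\oplus\Omega_{i+1}''$ has one in degree $j+1$---is immediate from Remark \ref{rmk:Omegas}(b) and needs no Koszul assumption: that remark yields that $\m_1(-j)$ or $\m_2(-j)$ is a direct summand of $\Omega_{i+1}'\oplus\Omega_{i+1}''$, and since $\m_1$ and $\m_2$ are standard graded, hence generated in degree $1$, either summand is generated in degree $j+1$. So the real content is the converse, and that is the step I would spend effort on.

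For the converse I would use the explicit description of $\Omega_{i+1}'\oplus\Omega_{i+1}''$ in Remark \ref{rmk:Omegas}(a), which presents it as a direct sum of four families of summands: the $\Omega_{p+1}^{R_2}(\sk)^{a_{p,k}}(-d_{p,k})$, the $\Omega_{p+1}^{R_1}(\sk)^{b_{p,k}}(-d'_{p,k})$, the $\m_2(-c_\ell)$, and the $\m_1(-c'_\ell)$. Since a minimal generator of a direct sum in a given degree is a minimal generator of one of the summands, it suffices to trace each family back to a minimal generator of $\Omega_i^R(M)=\Omega_i^{R_1}(M)\oplus\Omega_i'\oplus\Omega_i''$ in degree $j$. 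The families $\bigoplus_\ell\m_2(-c_\ell)$ and $\bigoplus_\ell\m_1(-c'_\ell)$ are handled directly: a generator in degree $j+1$ forces $c_\ell=j$ or $c'_\ell=j$, and by the construction in Proposition \ref{FieldSyzygyLemma} the $c_\ell$ (resp.\ $c'_\ell$) are exactly the degrees of the minimal generators of the $R_1$-summand $\Omega_i^{R_1}(M)\oplus\Omega_i''$ (resp.\ the $R_2$-summand $\Omega_i'$), so $\Omega_i^R(M)$ has a generator in degree $j$.

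The families built from syzygies of $\sk$ are where the only real subtlety lies and where Koszulness enters. Since $R_1,R_2$ are Koszul, $\sk$ has a linear resolution over each, so $\Omega_{p}^{R_t}(\sk)$ is concentrated in generating degree exactly $p$ for every $p$ and $t\in\{1,2\}$. Hence a minimal generator of $\Omega_{p+1}^{R_2}(\sk)^{a_{p,k}}(-d_{p,k})$ in degree $j+1$ forces $(p+1)+d_{p,k}=j+1$, i.e.\ $p+d_{p,k}=j$; and then the matching summand $\Omega_{p}^{R_2}(\sk)^{a_{p,k}}(-d_{p,k})$ of $\Omega_i'$ is generated in degree $p+d_{p,k}=j$, provided it is nonzero---which holds because its successor $\Omega_{p+1}^{R_2}(\sk)$ is nonzero. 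The $R_1$ family is identical. I expect this degree bookkeeping to be the main obstacle: without the Koszul hypothesis the syzygies of $\sk$ need not be generated in a single degree, so the clean correspondence (raise the syzygy index by one, raise the degree by one) would break, and a generator of $\Omega_{i+1}'\oplus\Omega_{i+1}''$ in degree $j+1$ could fail to come from a degree-$j$ generator of $\Omega_i^R(M)$. I would finally note that the possible vanishing of a top syzygy of $\sk$ over a regular $R_t$ affects only exact Betti counts and not the positivity statement at hand, since the converse argument starts from an actual generator and so never invokes a vanishing summand.
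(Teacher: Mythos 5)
Your proof is correct and follows essentially the same route as the paper's: the easy direction is quoted from Remark \ref{rmk:Omegas}(b), and the converse uses the explicit decomposition in Remark \ref{rmk:Omegas}(a) together with the Koszul hypothesis (so that syzygies of $\sk$ are generated in a single degree) to pin down the possible degrees of minimal generators and trace each summand back to the corresponding summand of $\Omega_i^R(M)$. Your added observation that $\Omega_p^{R_t}(\sk)\neq 0$ because its successor $\Omega_{p+1}^{R_t}(\sk)$ is nonzero merely makes explicit a point the paper leaves implicit.
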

\begin{proof}
  $(\Leftarrow)$ This follows from Remark \ref{rmk:Omegas}(b).
  
  $(\Rightarrow)$ 
  By Remark \ref{rmk:Omegas}(a), we have $\Omega_{i+1}'\oplus \Omega_{i+1}''\simeq$ 
  $$   \left( 
                 \bigoplus_{j=1}^{i}  \bigoplus_{k \in \mathbb{Z}}\Omega^{R_2}_{j+1}(\sk)^{a_{j,k}}(-d_{j,k})    \right) \oplus   \left(
                 \bigoplus_{j=1}^{i-1} \bigoplus_{k \in \mathbb{Z}}\Omega^{R_1}_{j+1}(\sk)^{b_{j,k}}(-d'_{j,k})       \right)   \oplus \left( \bigoplus_{j=1}^{n} \m_2{(-c_{j}))} \right) \oplus 
                 \left( \bigoplus_{k=1}^{m} \m_1(-c'_k) \right),$$
                 where $C= \{ c_j, c_k' \mid 1\leq j \leq n, 1 \leq k \leq m\}$ is the set of degrees of the minimal generators of $\Omega_i^R(M)$.

Note that the minimal generators of $\m_1$ and $\m_2$ are in degree $1$, and since $R_1, R_2$ are Koszul, the minimal generators of $\Omega_{j+1}^{R_1} (\sk), \Omega_{j+1}^{R_2} (\sk)$ are in degree $j+1$. Hence, if $d$ is the degree of a minimal generator of $\Omega_{i+1}' \oplus \Omega_{i+1}''$, then $d$ is of the form $d_{j,k}+j+1, d'_{j,k}+j+1, c_j+1$, or $c_k'+1$. 

We want to show that $d-1 \in C$. This is clear if $d=c_j+1$ or $c_k'+1$. Suppose $d=d_{j,k}+j+1$. Then $\Omega_{j+1}^{R_2}(\sk)(-d_{j,k})\mid (\Omega_{i+1}' \oplus \Omega_{i+1}'')$, and hence $\Omega_{j}^{R_2}(\sk)(-d_{j,k})\mid \Omega_i^R(M)$, forcing $d_{j,k}+j \in C$. A similar argument works in the last remaining case, proving the result.
\end{proof}

The following proposition, and its corollary, compare the Betti table of an $R_1$-module $M$, with its Betti table over $R$.

\begin{prop}\label{prop:firstNonzeroEntries}
Let $R_1$ and $R_2$ be Koszul algebras, $R= R_1 \times_{\mathsf k}R_2$, and $M$ be a finitely generated graded $R_1$-module. 
\begin{enumerate}[{\rm a)}]
    \item If $\beta^R_{i,j}(M)=0$, then $\beta_{i+1,j+1}^R(M)= \beta_{i+1,j+1}^{R_1}(M)$. \\
In particular, the first nonzero entries in each row of $\beta^R(M)$ and $\beta^{R_1}(M)$ are equal.

\item Furthermore, if $M$ is a pure $R_1$-module of type $\delta = (\delta_0, \delta_1,\delta_2,\ldots)$, then we have $\beta_{i,\delta_i}^R(M) \neq 0$ and $\beta_{i,\delta_i+j}^R(M) = 0$ for all $i< \pdim_{R_1}(M) +1$ and $j \geq 1$, i.e., every minimal generator of $\Omega_i^R(M)$ has degree at most $\delta_i$.
\end{enumerate}
\end{prop}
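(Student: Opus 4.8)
The plan is to combine three ingredients: the syzygy splitting $\Omega^R_i(M) \simeq \Omega^{R_1}_i(M) \oplus \Omega'_i \oplus \Omega''_i$ from Proposition \ref{FieldSyzygyLemma}, the degree-tracking Lemma \ref{lemma:degreeofminimalgenerators} (the only place the Koszul hypotheses on $R_1,R_2$ are used), and the additivity of graded Betti numbers across direct sums. First I would record the elementary fact that for an $R_1$-module $N$ the minimal number of generators in each degree is the same whether computed over $R_1$ or over $R$, since $\m_2$ annihilates $N$ and hence $N/\m N = N/\m_1 N$; applied to the summand $\Omega^{R_1}_{i+1}(M)$ this identifies its contribution to $\beta^R_{i+1,j+1}(M)$ with $\beta^{R_1}_{i+1,j+1}(M)$.

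For part (a), additivity gives that $\beta^R_{i+1,j+1}(M)$ equals $\beta^{R_1}_{i+1,j+1}(M)$ plus the number of minimal generators of $\Omega'_{i+1} \oplus \Omega''_{i+1}$ in degree $j+1$. By Lemma \ref{lemma:degreeofminimalgenerators} this last summand acquires a generator in degree $j+1$ precisely when $\Omega^R_i(M)$ has one in degree $j$, i.e. when $\beta^R_{i,j}(M) \neq 0$; so the hypothesis $\beta^R_{i,j}(M)=0$ removes the extra term and yields the asserted equality. For the ``in particular'' clause I would fix a row $c=j-i$ and let $i_0$ be the column of its first nonzero entry in $\beta^R(M)$. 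Since $\Omega^{R_1}_i(M)$ is a graded direct summand of $\Omega^R_i(M)$ by Corollary \ref{Cor-inequality-of-reg}, we have $\beta^{R_1}_{i,j}(M) \le \beta^R_{i,j}(M)$ throughout, so $\beta^{R_1}_{i,i+c}(M)=0$ for $i<i_0$. If $i_0=0$ the entries agree because both count minimal generators of $M$; if $i_0 \geq 1$, then $\beta^R_{i_0-1,(i_0-1)+c}(M)=0$, so part (a) gives $\beta^R_{i_0,i_0+c}(M)=\beta^{R_1}_{i_0,i_0+c}(M)\neq 0$, placing both first nonzero entries at the same column with the same value.

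For part (b) I would induct on $i$, the inductive hypothesis being that every minimal generator of $\Omega^R_i(M)$ has degree at most $\delta_i$. The non-vanishing $\beta^R_{i,\delta_i}(M)\neq 0$ is immediate from $\beta^R_{i,\delta_i}(M) \ge \beta^{R_1}_{i,\delta_i}(M) \neq 0$. For the degree bound, the summand $\Omega^{R_1}_{i+1}(M)$ contributes minimal generators only in degree $\delta_{i+1}$ by purity of $M$ over $R_1$, while Lemma \ref{lemma:degreeofminimalgenerators} shows the minimal generator degrees of $\Omega'_{i+1} \oplus \Omega''_{i+1}$ are exactly one more than those of $\Omega^R_i(M)$, hence at most $\delta_i+1$ by induction. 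Since a minimal pure resolution has strictly increasing shifts, $\delta_i+1 \le \delta_{i+1}$, so every minimal generator of $\Omega^R_{i+1}(M)=\Omega^{R_1}_{i+1}(M)\oplus\Omega'_{i+1}\oplus\Omega''_{i+1}$ has degree at most $\delta_{i+1}$, closing the induction for all $i \le \pdim_{R_1}(M)$; the base case $i=0$ is immediate since $M$ is generated purely in degree $\delta_0$.

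The genuine content is already concentrated in Lemma \ref{lemma:degreeofminimalgenerators}, so what remains is essentially bookkeeping. The point I would be most careful about is the interplay of two monotonicities: the ``field-syzygy'' contributions $\Omega'_{i+1}\oplus\Omega''_{i+1}$ raise generator degrees by exactly one per step, whereas the pure shifts satisfy $\delta_{i+1}\ge \delta_i+1$. Together these guarantee that the field-syzygy contributions never produce a generator of degree exceeding $\delta_{i+1}$, and hence never overtake the genuine $R_1$-syzygy generators sitting in degree $\delta_i$.
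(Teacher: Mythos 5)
Your proposal is correct and takes essentially the same route as the paper's proof: both rest on the splitting $\Omega^R_{i+1}(M) \simeq \Omega^{R_1}_{i+1}(M)\oplus\Omega'_{i+1}\oplus\Omega''_{i+1}$ of Proposition \ref{FieldSyzygyLemma} together with Lemma \ref{lemma:degreeofminimalgenerators} for part (a), and the same induction using $\delta_{i}+1 \leq \delta_{i+1}$ for part (b). You merely make explicit some bookkeeping the paper leaves implicit, namely the identification $N/\m N = N/\m_1 N$ for $R_1$-modules $N$ and the column-by-column argument for the ``in particular'' clause.
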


\begin{proof}
 Since $\beta_{0,j}^R(M)= \beta^{R_1}_{0,j}(M)$ for all $j$, both (a) and (b) hold for $i =0$. Hence assume $i > 0$. 
 
 a) By Proposition \ref{FieldSyzygyLemma}, we have $\Omega^R_{i+1}(M) =  \Omega^{R_1}_{i+1}(M) \oplus \Omega_{i+1}' \oplus \Omega_{i+1}''$. By Lemma \ref{lemma:degreeofminimalgenerators}, since $\beta_{i,j}^{R}(M) =0$, we see that $\Omega'_{i+1} \oplus \Omega''_{i+1}$ has no minimal generator of degree $j+1$. This proves (a). 

b) By induction, let us assume that every minimal generator of $\Omega_{i-1}^R(M)$ has degree at most $\delta_{i-1}$. Hence, by Lemma \ref{lemma:degreeofminimalgenerators}, the minimal generators of $\Omega_{i}' \oplus \Omega_{i}''$ have degree at most $1 + \delta_{i-1} \leq \delta_i$. Part (b) follows since $\Omega_i^{R_1}(M)$ is generated in degree $\delta_i$, and $\Omega^R_{i+1}(M) =  \Omega^{R_1}_{i+1}(M) \oplus \Omega_{i+1}' \oplus \Omega_{i+1}''$.
\end{proof}

\begin{cor}\label{cor:rows_of_Betti_table}
 Let $M$ be an $R_1$-module such that the $j^{th}$ row of $\beta^R(M)$ is nonzero. Then 
 \begin{enumerate}[{\rm a)}]
    \item The $j^{th}$ row of $\beta^{R_1}(M)$ is nonzero.
    \item If $i_0 = \min\{i \mid \beta_{i, j+i}^R(M) \neq 0\}$, then $\beta_{i,j+i}^R(M) \neq 0$ for all $i \geq i_0$.
 \end{enumerate}
\end{cor}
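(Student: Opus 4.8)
The plan is to deduce both parts of Corollary \ref{cor:rows_of_Betti_table} directly from the structural results already established for $R_1$-modules over the fibre product, namely Proposition \ref{FieldSyzygyLemma}, Remark \ref{rmk:Omegas}, and Proposition \ref{prop:firstNonzeroEntries}. The key observation is that the $j^{th}$ row of the Betti table records the Betti numbers $\beta_{i,j+i}^R(M)$, so saying ``the $j^{th}$ row is nonzero'' means $\beta_{i,j+i}^R(M)\neq 0$ for some $i$. For part (a), I would take such an $i$ and apply Proposition \ref{prop:firstNonzeroEntries}(a): since $\beta_{i,j+i}^R(M)\neq 0$, if we let $i_0$ be the \emph{smallest} index with $\beta_{i_0,j+i_0}^R(M)\neq 0$, then for $i = i_0$ we must have $\beta_{i_0,j+i_0}^{R_1}(M) = \beta_{i_0,j+i_0}^R(M)\neq 0$, because $\beta_{i_0-1,j+i_0-1}^R(M)=0$ by minimality of $i_0$ forces equality $\beta_{i_0,j+i_0}^R(M)=\beta_{i_0,j+i_0}^{R_1}(M)$. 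Hence the $j^{th}$ row of $\beta^{R_1}(M)$ is nonzero as well.

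For part (b), the essential tool is the propagation statement in Remark \ref{rmk:Omegas}(b): if $\Omega_i^R(M)$ has a minimal generator in degree $d$, then $\beta_{i+\ell,d+\ell}^R(M)\neq 0$ for all $\ell\geq 1$. First I would set $i_0 = \min\{i \mid \beta_{i,j+i}^R(M)\neq 0\}$, so that $\Omega_{i_0}^R(M)$ has a minimal generator in degree $d = j+i_0$. Applying Remark \ref{rmk:Omegas}(b) with this generator, I get $\beta_{i_0+\ell,\,(j+i_0)+\ell}^R(M)\neq 0$ for all $\ell\geq 1$, which is exactly $\beta_{i,j+i}^R(M)\neq 0$ for all $i > i_0$. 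Combined with the defining property $\beta_{i_0,j+i_0}^R(M)\neq 0$, this yields $\beta_{i,j+i}^R(M)\neq 0$ for all $i\geq i_0$, proving (b).

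I expect the proof to be short, with the main conceptual point being the correct bookkeeping between the row-index convention (the $j^{th}$ row tracks $j+i$ in homological degree $i$) and the generator-degree statements in the supporting results. The one place requiring a little care is ensuring that the ``minimal generator in degree $d=j+i_0$'' hypothesis of Remark \ref{rmk:Omegas}(b) is genuinely satisfied: $\beta_{i_0,j+i_0}^R(M)\neq 0$ precisely says $\Omega_{i_0}^R(M)$ has a minimal generator in degree $j+i_0$, so this is immediate. There is no serious obstacle here; the corollary is essentially a repackaging of the purity-propagation phenomenon for $R_1$-modules, and the work has already been done in Proposition \ref{prop:firstNonzeroEntries} and Remark \ref{rmk:Omegas}.

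\begin{proof}
The $j^{th}$ row of the Betti table of an $R$-module records the numbers $\beta_{i,j+i}^R(M)$ as $i$ varies. Set $i_0 = \min\{i \mid \beta_{i,j+i}^R(M) \neq 0\}$, which exists since the $j^{th}$ row of $\beta^R(M)$ is nonzero.

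a) By minimality of $i_0$, we have $\beta_{i_0-1,j+i_0-1}^R(M)=0$. If $i_0=0$, then $\beta_{0,j}^R(M)=\beta_{0,j}^{R_1}(M)\neq 0$, so the $j^{th}$ row of $\beta^{R_1}(M)$ is nonzero. If $i_0 > 0$, then Proposition \ref{prop:firstNonzeroEntries}(a), applied with the pair $(i_0-1, j+i_0-1)$, gives $\beta_{i_0,j+i_0}^R(M)=\beta_{i_0,j+i_0}^{R_1}(M)$. Since the left-hand side is nonzero, so is $\beta_{i_0,j+i_0}^{R_1}(M)$, and hence the $j^{th}$ row of $\beta^{R_1}(M)$ is nonzero.

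b) Since $\beta_{i_0,j+i_0}^R(M)\neq 0$, the syzygy module $\Omega_{i_0}^R(M)$ has a minimal generator in degree $d=j+i_0$. By Remark \ref{rmk:Omegas}(b), it follows that $\beta_{i_0+\ell,\,d+\ell}^R(M)\neq 0$ for all $\ell\geq 1$, that is, $\beta_{i,j+i}^R(M)\neq 0$ for all $i > i_0$. Together with $\beta_{i_0,j+i_0}^R(M)\neq 0$, this shows $\beta_{i,j+i}^R(M)\neq 0$ for all $i \geq i_0$.
\end{proof}
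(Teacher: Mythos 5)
Your proof is correct and follows essentially the same route as the paper: part (a) via Proposition \ref{prop:firstNonzeroEntries}(a) applied at the minimal index $i_0$, and part (b) via the propagation statement of Remark \ref{rmk:Omegas}(b). The only cosmetic difference is that in (b) you propagate directly from the nonvanishing of $\beta_{i_0,j+i_0}^R(M)$, whereas the paper passes through the equality with the $R_1$-Betti number from (a); both are applications of the same remark.
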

\begin{proof}
    By the previous proposition, we have $\beta_{i_0,j+i_0}^R(M)= \beta_{i_0,j+i_0}^{R_1}(M)$, which implies (a). This, together with Remark \ref{rmk:Omegas}(b), gives (b). 
\end{proof}

\begin{prop}\label{Prop-equality-of-reg}
If $R_1$ and $R_2$ are Koszul, then we have the following:

\begin{enumerate}[{\rm a)}]

\item If $M$ is an $R_1$-module, then $\reg_{R_1}(M) = \reg_R(M)$. 

\item If $M_j$ is an $R_j$-module for $j = 1,2$, then $\reg_R(M_1\oplus M_2) = \max\{\reg_{R_1}(M_1), \reg_{R_2}(M_2)\}.$

\end{enumerate}
\end{prop}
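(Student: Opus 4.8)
The plan is to establish (a) first and then deduce (b) from it. Since Corollary \ref{Cor-inequality-of-reg}(a) already gives $\reg_R(M) \geq \reg_{R_1}(M)$, the entire content of (a) is the reverse inequality. I would recast everything in terms of the top degrees of minimal generators of the syzygy modules: writing $t_i^R(M) = \sup\{ j \mid \beta_{i,j}^R(M) \neq 0 \}$ (and $t_i^{R_1}(M)$ analogously), we have $\reg_R(M) = \sup_i \{ t_i^R(M) - i \}$ and $\reg_{R_1}(M) = \sup_i \{ t_i^{R_1}(M) - i \}$, so it suffices to control the numbers $t_i^R(M)$.

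The key step is a recursion for $t_i^R(M)$. By Proposition \ref{FieldSyzygyLemma}, $\Omega_i^R(M) \simeq \Omega_i^{R_1}(M) \oplus (\Omega_i' \oplus \Omega_i'')$, and since a minimal generating set of a direct sum is the union of minimal generating sets of the summands, $t_i^R(M)$ is the larger of $t_i^{R_1}(M)$ and the top degree of a minimal generator of $\Omega_i' \oplus \Omega_i''$. This is where the Koszul hypothesis enters: by Lemma \ref{lemma:degreeofminimalgenerators} (which requires $R_1, R_2$ Koszul), $\Omega_i' \oplus \Omega_i''$ has a minimal generator in degree $j$ exactly when $\Omega_{i-1}^R(M)$ has one in degree $j-1$, so the top degree of $\Omega_i' \oplus \Omega_i''$ equals $1 + t_{i-1}^R(M)$. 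Combining these, I obtain for $i \geq 1$
\[ t_i^R(M) = \max\{ t_i^{R_1}(M),\ 1 + t_{i-1}^R(M) \}. \]
Setting $u_i = t_i^R(M) - i$ turns this into $u_i = \max\{ t_i^{R_1}(M) - i,\ u_{i-1}\}$, with $u_0 = t_0^R(M) = t_0^{R_1}(M)$ since $\Omega_0 = M$ is the same module. An immediate induction then gives $u_i = \max_{0 \leq i' \leq i}\{ t_{i'}^{R_1}(M) - i'\}$, and taking the supremum over $i$ yields $\reg_R(M) = \sup_i u_i = \sup_{i'}\{t_{i'}^{R_1}(M) - i'\} = \reg_{R_1}(M)$, which is (a). I would adopt the convention $t_i^{R_1}(M) = -\infty$ when $\Omega_i^{R_1}(M) = 0$ so that the recursion and the final supremum are unaffected by vanishing summands.

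For (b), the minimal free resolution of $M_1 \oplus M_2$ over $R$ is the direct sum of the minimal resolutions of $M_1$ and $M_2$, so $\beta_{i,j}^R(M_1 \oplus M_2) = \beta_{i,j}^R(M_1) + \beta_{i,j}^R(M_2)$ and hence $\reg_R(M_1 \oplus M_2) = \max\{\reg_R(M_1), \reg_R(M_2)\}$. Applying (a) to the $R_1$-module $M_1$ gives $\reg_R(M_1) = \reg_{R_1}(M_1)$, and by the symmetry $R \simeq R_2 \times_{\sk} R_1$ (the Koszul hypothesis being symmetric in $R_1, R_2$), the corresponding instance of (a) gives $\reg_R(M_2) = \reg_{R_2}(M_2)$. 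Substituting yields (b).

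The main obstacle I anticipate is justifying the recursion cleanly rather than the algebra that follows it. In particular I must verify the base case $i = 1$, where $\Omega_1'' = 0$ and $\Omega_1'$ records the $\m_2$-shifts coming from the generators of $M$, which amounts to invoking the $i = 0$ instance of Lemma \ref{lemma:degreeofminimalgenerators}; and I must handle the bookkeeping when some $\Omega_i^{R_1}(M)$ or $\Omega_i' \oplus \Omega_i''$ vanishes, which the $-\infty$ convention is designed to absorb. Once $t_i^R(M) = \max\{t_i^{R_1}(M),\, 1 + t_{i-1}^R(M)\}$ is firmly in place, the remainder is a routine unwinding of the recursion.
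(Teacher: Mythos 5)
Your proposal is correct and is essentially the paper's own argument: both proofs rest on the decomposition of Proposition \ref{FieldSyzygyLemma} combined with the degree-tracking Lemma \ref{lemma:degreeofminimalgenerators}, the paper packaging this as a row-nonvanishing transfer (via Proposition \ref{prop:firstNonzeroEntries} and Corollary \ref{cor:rows_of_Betti_table}(a)) where you instead unwind the same information as a max-recursion on the top generator degrees $t_i$. Part (b) is deduced from (a) exactly as in the paper, using that regularity of a direct sum is the maximum of the regularities.
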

 \begin{proof}

a) By Corollary \ref{Cor-inequality-of-reg}, we have $\reg_R(M) \geq \reg_{R_1}(M)$. 

In order to prove the other inequality, we may assume that 
$\reg_{R_1}(M)= s <\infty$. This implies that the $j^{th}$ row of $\beta_{R_1}(M)$ is zero for all $j >s$, and hence the same holds for $\beta^R(M)$ by Corollary \ref{cor:rows_of_Betti_table}(a). Hence, $\reg_R(M) \leq s$. 

b) This follows from (a), since $\reg_R(M_1\oplus M_2)= \max\{ \reg_R(M_1) , \reg_R(M_2)\}$.
\end{proof} 

 \begin{prop}\label{Prop-prep-for-koszul}
 Let $R_1$ and $R_2$ be standard graded $\sk$-algebras, $R=R_1 \times_{\mathsf k} R_2$, and $M$ a nonzero $R_1$-module. Then the following are equivalent:
 \begin{enumerate}[{\rm i)}]
     \item $M$ has a pure resolution over $R$.
     \item $M$ has a linear resolution over $R$.
     \item $M$ has a linear resolution over $R_1$, and $R_1$, $R_2$ are Koszul.
 \end{enumerate}
 \end{prop}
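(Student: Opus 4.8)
The plan is to prove the cycle of implications (ii) $\Rightarrow$ (i) $\Rightarrow$ (iii) $\Rightarrow$ (ii). The implication (ii) $\Rightarrow$ (i) is immediate, since a linear resolution is in particular pure. The genuinely nontrivial directions are (iii) $\Rightarrow$ (ii) and (i) $\Rightarrow$ (iii), and I expect the main obstacle to be extracting the Koszulness of $R_1$ and $R_2$ from the mere \emph{purity} of the $R$-resolution of $M$ in the latter direction.

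For (iii) $\Rightarrow$ (ii), assume $M$ is generated in degree $\delta_0$ with a linear resolution over $R_1$ and that $R_1, R_2$ are Koszul. I would show by induction on $i$ that $\Omega_i^R(M)$ is minimally generated purely in degree $\delta_0 + i$; since $\beta_{i,j}^R(M)$ counts the minimal generators of $\Omega_i^R(M)$ in degree $j$, this is exactly the assertion that $M$ has a linear resolution over $R$. The base case $i = 0$ is the hypothesis on $M$. For the inductive step, Proposition \ref{FieldSyzygyLemma} gives $\Omega_{i+1}^R(M) \simeq \Omega_{i+1}^{R_1}(M) \oplus \Omega_{i+1}' \oplus \Omega_{i+1}''$; the first summand is generated in degree $\delta_0 + i + 1$ because $M$ is linear over $R_1$, while Lemma \ref{lemma:degreeofminimalgenerators} (which requires exactly the Koszulness of $R_1, R_2$) promotes the inductive hypothesis ``$\Omega_i^R(M)$ generated in degree $\delta_0 + i$'' to ``$\Omega_{i+1}' \oplus \Omega_{i+1}''$ generated in degree $\delta_0 + i + 1$.'' Combining these finishes the step.

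For (i) $\Rightarrow$ (iii), I would first record that any nonzero $R_1$-module has infinite projective dimension over $R$: by Lemma \ref{first_syzygy_for_direct_sum}(a) a shifted copy of $\m_2$ splits off $\Omega_1^R(M)$, and since $\m_1, \m_2 \neq 0$ the summands $\m_1$ and $\m_2$ alternate through all higher syzygies, so no syzygy vanishes. Thus $M$ is pure of type $\delta = (\delta_0, \delta_1, \ldots)$ with every $\delta_i$ finite. Purity means $\Omega_i^R(M)$ is generated in the single degree $\delta_i$; since $\Omega_i^R(M)$ always has a generator, Remark \ref{rmk:Omegas}(b) forces $\Omega_{i+1}^R(M)$ to have a generator in degree $\delta_i + 1$, whence $\delta_{i+1} = \delta_i + 1$ by purity. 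Therefore $\delta_i = \delta_0 + i$ and the $R$-resolution is in fact linear. Since $\Omega_i^{R_1}(M) \mid \Omega_i^R(M)$ by Corollary \ref{Cor-inequality-of-reg}, each nonzero $\Omega_i^{R_1}(M)$ is a direct summand of a module generated purely in degree $\delta_0 + i$, hence is itself generated in degree $\delta_0 + i$; this shows $M$ has a linear resolution over $R_1$.

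It remains to deduce that $R_1$ and $R_2$ are Koszul, which I expect to be the crux. The idea is to read off the generation degrees of the iterated field syzygies $\Omega_s^{R_j}(\sk)$ from the decomposition in Remark \ref{rmk:Omegas}(a). The generators of $M = \Omega_0^R(M)$ are all of ``$R_1$-type,'' so Lemma \ref{first_syzygy_for_direct_sum}(a) splits a copy of $\m_2 = \Omega_1^{R_2}(\sk)$, shifted by $\delta_0$, off $\Omega_1^R(M)$; following the recursion of Remark \ref{rmk:Omegas}(a), this seed propagates to give $\Omega_s^{R_2}(\sk)(-\delta_0)$ as a direct summand of $\Omega_s^R(M)$ for every $s \geq 1$. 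Whenever $\Omega_s^{R_2}(\sk) \neq 0$, being a nonzero direct summand of a module generated purely in degree $\delta_s = \delta_0 + s$ forces it to be generated in degree $s$; together with the vacuous case $\Omega_s^{R_2}(\sk) = 0$, this yields $\beta_{s,j}^{R_2}(\sk) = 0$ for $j \neq s$, i.e., $R_2$ is Koszul. Symmetrically, the $R_2$-type generators that first appear in $\Omega_1^R(M)$ seed a shifted copy of $\m_1 = \Omega_1^{R_1}(\sk)$ inside $\Omega_2^R(M)$, and propagation shows $\Omega_s^{R_1}(\sk)$ is generated in degree $s$, so $R_1$ is Koszul as well. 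The points requiring care are the bookkeeping of the shifts along the recursion of Remark \ref{rmk:Omegas}(a), and the elementary fact that a nonzero direct summand of a module generated in a single degree is generated in that same degree.
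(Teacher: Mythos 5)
Your proposal is correct: each step is consistent with the paper's machinery, and the propagation argument you use for Koszulness does work. However, it is routed differently from the paper's proof at the crux, so a comparison is worthwhile. The paper proves (i) $\Rightarrow$ (ii) exactly as you do (purity plus the forced generators in degrees $d+i$ coming from Lemma \ref{first_syzygy_for_direct_sum}(a) and Remark \ref{rmk:Omegas}(b)), but it extracts Koszulness from (ii) in two short moves: since $\m_2(-d) \mid \Omega_1^R(M)$ and hence $\m_1(-d-1) \mid \Omega_2^R(M)$, and a direct summand of a module with a linear $R$-resolution again has a linear $R$-resolution, both $\m_1$ and $\m_2$ are linear over $R$; Corollary \ref{Cor-inequality-of-reg}(b) then transfers this linearity down to $R_j$, which is precisely Koszulness of $R_j$. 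Your alternative — tracking the shifted copies $\Omega_s^{R_2}(\sk)(-\delta_0)$ and $\Omega_s^{R_1}(\sk)(-\delta_0-1)$ through the recursion of Remark \ref{rmk:Omegas}(a) and reading off their generation degrees from the purity of $\beta^R(M)$ — reaches the same conclusion but in effect re-proves, for the special modules $\m_j$, the content of Corollary \ref{Cor-inequality-of-reg}(b); it costs more bookkeeping, but it is more self-contained, needing only the structural decomposition of Proposition \ref{FieldSyzygyLemma} rather than the regularity corollaries. Likewise, for (iii) $\Rightarrow$ (ii) the paper simply cites Proposition \ref{Prop-equality-of-reg}(a), whereas you unwind the same underlying tool, Lemma \ref{lemma:degreeofminimalgenerators}, into a direct induction on generation degrees of syzygies; these are equivalent, since Proposition \ref{Prop-equality-of-reg} is itself deduced from that lemma. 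One small point to make explicit in a final write-up: the nonvanishing $\m_1, \m_2 \neq 0$ that powers your infinite-projective-dimension claim is exactly the paper's standing assumption $R_1 \not\simeq \sk$ and $R_2 \not\simeq \sk$.
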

 \begin{proof}
(i) $\Rightarrow$ (ii): Let $M$ be generated in degree $d$. Then by Lemma \ref{first_syzygy_for_direct_sum}(a), we have $\beta_{i,d+i}^R(M)\neq 0$ for all $i \geq 1$. Since $M$ has a pure resolution over $R$, this forces $\beta_{i,j}^R(M) = 0$ for $j \neq d + i$, i.e., $M$ has a linear resolution over $R$.

(ii) $\Rightarrow$ (iii): By Corollary \ref{Cor-inequality-of-reg}(b), (ii) implies that $M$ has a linear resolution over $R_1$. 

Moreover, if $M$ is generated in degree $d$, then by Lemma \ref{first_syzygy_for_direct_sum}(a), we have $\m_2(-d) \mid \Omega_1^R(M)$, and hence $\m_1(-d-1) \mid \Omega_2^R(M)$. Since $M$ has a linear resolution over $R$, we see that both $\m_1$ and $\m_2$ have linear resolutions over $R$. In particular, by Corollary \ref{Cor-inequality-of-reg}(b), for each $j \in \{1,2\}$, we see that $\m_j$ has a linear resolution over $R_j$, proving that $R_j$ is Koszul.

(iii) $\Rightarrow$ (ii) (and hence (i)): Follows from Proposition \ref{Prop-equality-of-reg}(a).
 \end{proof}
 
 A finer version of Proposition \ref{Prop-prep-for-koszul} is true, which follows from its proof. We capture this in the following remark.  
 \begin{rmk}
Let $M$ be a nonzero $R_1$-module. Then the following are equivalent:
\begin{enumerate}[{\rm i)}]
    \item $M$ is linear up to the $(i+1)^{st}$ stage over $R$. 
    \item $M$ is linear up to the $(i+1)^{st}$ stage over $R_1$, $\mathsf k$ is linear up to the $i^{th}$ stage over $R_2$, and up to the $(i-1)^{st}$ stage over $R_1$.
\end{enumerate}  
\end{rmk}

The following corollary is immediate from the proof of Proposition \ref{Prop-prep-for-koszul} applied to $M=\sk$. The equivalence of (ii) and (iii) was proved by Backelin and Fr\"oberg in \cite{BF85}.
 \begin{cor}\label{KoszulCorollary}
  The following are equivalent:
 \begin{enumerate}[{\rm i)}]
     \item $\sk$ has a pure resolution over $R$.
     \item $R$ is Koszul, i.e., $\sk$ has a linear resolution over $R$.
     \item $R_1$ and $R_2$ are Koszul.
 \end{enumerate}
 \end{cor}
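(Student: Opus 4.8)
The plan is to obtain this corollary as the special case $M = \sk$ of Proposition \ref{Prop-prep-for-koszul}, exactly as the sentence preceding the statement advertises. The key observation is that $\sk$ is an $R_1$-module (it is the residue field of $R_1$, on which $\m_1$ acts as zero), so Proposition \ref{Prop-prep-for-koszul} applies verbatim with $M = \sk$. First I would note that $\sk$ is automatically generated in degree $0$ and has a linear resolution over $R_1$ precisely when $R_1$ is Koszul, by the very definition of the Koszul property. With this translation, conditions (i), (ii), (iii) of Proposition \ref{Prop-prep-for-koszul} become: (i) $\sk$ has a pure resolution over $R$; (ii) $\sk$ has a linear resolution over $R$, i.e.\ $R$ is Koszul; and (iii) $\sk$ has a linear resolution over $R_1$ (i.e.\ $R_1$ is Koszul) together with $R_1, R_2$ both Koszul, which collapses to the single condition that $R_1$ and $R_2$ are Koszul.

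The main step is therefore just to verify these identifications carefully and record that no asymmetry in the roles of $R_1$ and $R_2$ causes a problem. Since $R = R_1 \times_{\mathsf k} R_2$ is symmetric in $R_1$ and $R_2$, and $\sk$ is simultaneously the residue field of both factors, the conclusion (iii) of Proposition \ref{Prop-prep-for-koszul}, namely ``$\sk$ has a linear resolution over $R_1$ and $R_1, R_2$ are Koszul'', is equivalent to the symmetric statement ``$R_1$ and $R_2$ are Koszul.'' I would make this explicit so the reader sees that requiring linearity over $R_1$ (as opposed to $R_2$) is not a genuine restriction here: it is subsumed once we demand that $R_1$ is Koszul.

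Concretely, the argument runs as follows. Applying Proposition \ref{Prop-prep-for-koszul} with $M = \sk$ gives the equivalence of (i) ``$\sk$ has a pure resolution over $R$'' and (ii) ``$\sk$ has a linear resolution over $R$.'' By definition, (ii) is exactly the assertion that $R$ is Koszul. The same proposition equates these with condition (iii): $\sk$ has a linear resolution over $R_1$ and both $R_1, R_2$ are Koszul. But $\sk$ has a linear resolution over $R_1$ if and only if $R_1$ is Koszul, which is already part of the requirement that $R_1$ and $R_2$ be Koszul; hence (iii) simplifies to the statement that $R_1$ and $R_2$ are Koszul. This yields the three-way equivalence.

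I do not expect any serious obstacle: the entire content has been loaded into Proposition \ref{Prop-prep-for-koszul}, and this corollary is a bookkeeping specialization. The only point requiring a moment of care is confirming that the hypothesis of Proposition \ref{Prop-prep-for-koszul} that $M$ be a nonzero $R_1$-module is met by $\sk$ (it is, trivially), and that the phrase ``$M$ has a linear resolution over $R_1$'' unwinds to ``$R_1$ is Koszul'' when $M = \sk$. I would also add the one-line historical remark, already present in the excerpt, attributing the equivalence of (ii) and (iii) to Backelin and Fröberg \cite{BF85}, since our contribution here is the new equivalence with the a priori weaker purity condition (i).
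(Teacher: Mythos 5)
Your proof is correct and takes essentially the same route as the paper: the paper also obtains the corollary by specializing Proposition \ref{Prop-prep-for-koszul} to $M=\sk$, noting (as you do) that condition (iii) there collapses because ``$\sk$ has a linear resolution over $R_1$'' is precisely the definition of $R_1$ being Koszul. Your identification of (ii) with the Koszul property of $R$ and the attribution of the (ii)$\Leftrightarrow$(iii) equivalence to Backelin--Fr\"oberg likewise match the paper.
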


As a concrete application of Proposition \ref{first_syzygy} and Proposition \ref{Prop-prep-for-koszul}, we obtain the following result, which was also proved by Conca in \cite{AC2000}.
\begin{cor}\label{UniversallyKoszulCorollary}
    Let $R_1$ and $R_2$ be standard graded $\sk$-algebras and $R=R_1 \times_{\mathsf k} R_2$. Then $R$ is universally Koszul if and only if $R_1$ and $R_2$ are universally Koszul.
\end{cor}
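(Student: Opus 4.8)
The plan is to prove each implication by reducing universal Koszulness to a statement about a single ideal generated by linear forms, and then to move linear resolutions between $R$ and the factors $R_j$ using Proposition \ref{first_syzygy} and Proposition \ref{Prop-prep-for-koszul}. Throughout I use that $\m=\m_1\oplus\m_2$ with $\m_1\m_2=0$, and that the natural projections $\pi_j\colon R\to R_j$ kill $\m_{3-j}$.

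For the direction $R$ universally Koszul $\Rightarrow R_1,R_2$ universally Koszul, I would take an arbitrary ideal $I_1\subseteq R_1$ generated by linear forms $\ell^{(1)},\dots,\ell^{(p)}\in(\m_1)_1$ and form the ideal $J=I_1R\subseteq R$ generated by the same forms, now regarded as linear forms of $R$ via $\m_1\subseteq\m$. Since $\m_1\m_2=0$, the $R$-action on $J$ factors through $\pi_1$, so as an $R$-module $J$ is precisely $I_1$ inflated along $\pi_1$; in particular $J$ is an $R_1$-module and a linear ideal of $R$. Universal Koszulness of $R$ then gives that $J$ has a linear resolution over $R$, and Corollary \ref{Cor-inequality-of-reg}(b) transfers this to say that $I_1$ has a linear resolution over $R_1$. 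As $I_1$ was arbitrary, $R_1$ is universally Koszul, and by symmetry so is $R_2$. The only delicate point here is the identification of $J$ with $I_1$ as $R$-modules.

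For the reverse direction, I would first note that universal Koszulness of $R_1$ and $R_2$ forces them to be Koszul: each $\m_j$ is generated by linear forms and hence has a linear resolution, which is equivalent to $R_j$ being Koszul. Now let $I\subseteq R$ be any ideal generated by linear forms. Since $I$ is generated in degree one, I would apply Proposition \ref{first_syzygy} with $N=I\subseteq\m=\m R$, decomposing $\Omega_1^R(I)$ as a direct sum of shifts of $\m_1$ and $\m_2$ together with $\Omega_1^{R_1}(\pi_1(I))$ and $\Omega_1^{R_2}(\pi_2(I))$. Here $\pi_1(I)$ and $\pi_2(I)$ are generated by the images of the generators of $I$, hence are linear ideals of the universally Koszul rings $R_1,R_2$ and so have linear resolutions; moreover, because every minimal generator of $I$ is a genuine linear form, each summand is an $R_1$- or $R_2$-module generated in degree two. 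Each summand therefore has a linear resolution over its factor $R_j$, and by Proposition \ref{Prop-prep-for-koszul} (and its $R_2$-analogue), using that $R_1,R_2$ are Koszul, it acquires a linear resolution over $R$. A direct sum of modules with linear resolutions generated in the common degree two again has a linear resolution, so $\Omega_1^R(I)$ has a linear resolution over $R$ and is generated in degree two; since $\beta_{i,j}^R(I)=\beta_{i-1,j}^R(\Omega_1^R(I))$ for $i\geq1$ and $I$ is generated in degree one, this forces $I$ to have a linear resolution over $R$. As $I$ was arbitrary, $R$ is universally Koszul.

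I expect the main obstacle to be the degree bookkeeping in the reverse direction: one must verify that in the decomposition of $\Omega_1^R(I)$ coming from Proposition \ref{first_syzygy} every summand is generated in degree exactly two, so that the individual linear resolutions assemble into a single linear resolution of $\Omega_1^R(I)$. This rests on the fact that all minimal generators of a linear ideal are honest linear forms, and on tracking the shifts of $\m_1$ and $\m_2$ correctly. Once this alignment is established, Proposition \ref{Prop-prep-for-koszul} supplies the remaining work of lifting linear resolutions of $R_j$-modules up to $R$.
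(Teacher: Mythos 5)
Your proposal is correct and follows essentially the same route as the paper: the forward direction views a linear ideal of $R_j$ as a linear ideal of $R$ (via $\m_1\m_2=0$) and transfers its linear resolution down using Corollary \ref{Cor-inequality-of-reg}(b)/Proposition \ref{Prop-prep-for-koszul}, while the reverse direction decomposes $\Omega_1^R(I)$ via Proposition \ref{first_syzygy} into $\Omega_1^{R_1}(\pi_1(I))$, $\Omega_1^{R_2}(\pi_2(I))$, and shifts $\m_j(-1)$, all generated in degree two with linear resolutions lifted to $R$ by Proposition \ref{Prop-prep-for-koszul}. Your degree bookkeeping and the identification of $I_1R$ with $I_1$ as $R$-modules are exactly the (implicit) steps in the paper's argument.
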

\begin{proof}
    Let $R$ be universally Koszul. Hence, if $I\subset R_j$ is an ideal generated by linear forms, we know that $I$ has a linear resolution over $R$. By Proposition \ref{Prop-prep-for-koszul}, we get that $I$ has a linear resolution over $R_j$. This shows that $R_j$ is universally Koszul.

    Conversely, suppose that $R_1$ and $R_2$ are universally Koszul. 
    Let $I= \langle x_1+y_1, x_2+y_2, \ldots, x_n+y_n\rangle$ be an ideal in $R$ generated by linear forms with $x_i\in \m_1$, $y_j\in \m_2$. By Proposition \ref{first_syzygy}, we have $\Omega_1^R(I) = \Omega_1^{R_1}(\langle x_1, \ldots, x_n\rangle) \oplus \Omega_1^{R_2}(\langle y_1, \ldots, y_n\rangle) \oplus \m_2^{\oplus r}(-1) \oplus \m_1^{\oplus s}(-1) $ for some $r, s \geq 0$. Since $R_1$ and $R_2$ are universally Koszul, the modules $\Omega_1^{R_1}(\langle x_1, \ldots, x_n\rangle)$ and  $\Omega_1^{R_2}(\langle y_1, \ldots, y_n\rangle)$ are generated in degree 2, and have a linear resolution. Moreover, since $\reg_{R_j}(\m_j)=1$, by Corollary \ref{Cor-inequality-of-reg}, we have $\reg_R(\m_j)=1$. This shows that $\Omega_1^R(I)$ is generated in degree 2, and has a linear resolution, which implies that $I$ has a linear resolution over $R$.  
\end{proof}

Avramov and Peeva (\cite{AP01}) proved that a $\sk$-algebra $R$ is Koszul if and only if every finitely generated graded $R$-module has finite regularity. For fibre products, we have the following stronger result.
\begin{prop}\label{prop:koszulChar}
Let $R_1$ and $R_2$ be standard graded $\sk$-algebras, and $R=R_1 \times_{\mathsf k} R_2$. Then the following are equivalent:
\begin{enumerate}[{\rm i)}]
    \item There exists an $R$-module $M$ with $\pdim_R(M)=\infty$ and $\reg_R(M)<\infty$.
    \item $\reg_R(\sk)< \infty$.
    \item $R$ is Koszul.
\end{enumerate}
\end{prop}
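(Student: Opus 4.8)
The plan is to prove the equivalences as a cycle $(iii) \Rightarrow (i) \Rightarrow (ii) \Rightarrow (iii)$, treating $(ii) \Leftrightarrow (iii)$ as essentially the Avramov--Peeva characterization and reserving the genuine fibre-product input for the implication $(i) \Rightarrow (ii)$.

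For the soft directions I would argue as follows. To see $(iii) \Rightarrow (i)$ it suffices to produce a single witness, and I would take $M = \sk$. If $R$ is Koszul then $\reg_R(\sk) = 0 < \infty$ by definition, while $\pdim_R(\sk) = \infty$ because $R$ is not regular: choosing nonzero $u \in \m_1$ and $v \in \m_2$ gives $uv = 0$ in $R$, so $R$ has zero divisors and cannot be a polynomial ring, whence $\pdim_R(\sk) = \infty$ by Auslander--Buchsbaum--Serre. For $(ii) \Rightarrow (iii)$ I would invoke the theorem of Avramov and Peeva \cite{AP01} in its sharp form, namely that finiteness of $\reg_R(\sk)$ already forces $R$ to be Koszul; the reverse $(iii) \Rightarrow (ii)$ is immediate since $\reg_R(\sk) = 0$.

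The heart of the argument is $(i) \Rightarrow (ii)$, where the fibre-product structure does the work. Starting from a module $M$ with $\pdim_R(M) = \infty$, Remark \ref{directSummandRemark} guarantees that \emph{both} $\m_1$ and $\m_2$, up to a shift, occur as direct summands of syzygy modules of $M$: its sharper half shows that if $\m_1 \mid \Omega_3^R(M)$ then $\m_1 \mid \Omega_{2i-1}^R(M)$ and $\m_2 \mid \Omega_{2i}^R(M)$ for all $i \geq 2$, and symmetrically. I would then combine three elementary regularity facts: (a) a direct summand has regularity at most that of the ambient module, so $\reg_R(\m_k(-d)) \leq \reg_R(\Omega_i^R(M))$; (b) the syzygy bound $\reg_R(\Omega_i^R(M)) \leq \reg_R(M) + i$, which is finite by hypothesis; and (c) $\reg_R(\m_k(-d)) = \reg_R(\m_k) + d$. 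Together these give $\reg_R(\m_1) < \infty$ and $\reg_R(\m_2) < \infty$. Since the maximal ideal of $R$ is $\m = \m_1 \oplus \m_2$ as $R$-modules (Remark \ref{rmk:basicProperitesOfFibreProducts}), we get $\reg_R(\m) = \max\{\reg_R(\m_1), \reg_R(\m_2)\} < \infty$, and finally $\m = \Omega_1^R(\sk)$ forces $\reg_R(\sk) < \infty$.

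I expect the only delicate point to be ensuring that \emph{both} maximal ideals $\m_1$ and $\m_2$ appear among the syzygies. The first half of Remark \ref{directSummandRemark} alone only produces one of the two in each syzygy module, so the argument genuinely relies on its alternating refinement; without it one could bound $\reg_R(\m_k)$ for only a single $k$, and hence could not control $\reg_R(\m)$. Everything else reduces to bookkeeping with the standard inequalities for the regularity of summands, syzygies, and twists.
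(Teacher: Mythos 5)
Your proposal is correct, and most of it coincides with the paper's own argument: your implication (i) $\Rightarrow$ (ii) — extracting both $\m_1$ and $\m_2$, up to twist, as direct summands of syzygies of $M$ via the alternating refinement of Remark \ref{directSummandRemark}, then bounding $\reg_R(\m_k)$ by the standard inequalities for summands, twists, and syzygies, and finally passing from $\m=\m_1\oplus\m_2=\Omega_1^R(\sk)$ to $\reg_R(\sk)<\infty$ — is exactly the paper's proof of that step, and your justification that $\pdim_R(\sk)=\infty$ in (iii) $\Rightarrow$ (i) (zero divisors $uv=0$ with $u\in\m_1$, $v\in\m_2$, so $R$ is not regular) correctly fills in a detail the paper leaves implicit. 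Where you genuinely diverge is (ii) $\Rightarrow$ (iii): you invoke Avramov--Peeva \cite{AP01} in its sharp form ($\reg_R(\sk)<\infty$ already forces Koszulness), which is indeed the main theorem of that paper, so your step is legitimate — and you rightly note that the weaker form quoted in the paper's introduction (finite regularity of \emph{all} modules) would not suffice here. The paper, by contrast, proves this implication from scratch using the fibre-product structure: if $\reg_R(\sk)=s$ is attained at $\beta^R_{i,s+i}(\sk)\neq 0$, then by Remark \ref{rmk:Omegas}(b) some $\m_k(-(s+i))$ splits off $\Omega_{i+1}^R(\sk)$, and comparing $\reg_R(\m_k)+s+i=\reg_R(\m_k(-(s+i)))\leq \reg_R(\Omega_{i+1}^R(\sk))\leq s+i+1$ forces $\reg_R(\m_k)=1$; the alternation of summands then gives $\reg_R(\m_1)=\reg_R(\m_2)=1$, so $\Omega_1^R(\sk)$ has a linear resolution and $R$ is Koszul. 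What your route buys is brevity; what the paper's route buys is self-containedness: for fibre products the equivalence (ii) $\Leftrightarrow$ (iii) falls out of elementary syzygy bookkeeping rather than the full strength of \cite{AP01}, which is part of the point of the proposition as a "stronger result for fibre products."
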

\begin{proof}
(i) $\Rightarrow$ (ii): By Remark \ref{directSummandRemark}, either $\m_1(-\delta)\mid \Omega_3^R(M)$ or $\m_1(-\delta)\mid \Omega_4^R(M)$ for some $\delta\in \mathbb Z$. Since $\reg_R(M)<\infty$, we get $\reg_R(\m_1)<\infty$. Similarly, $\reg_R(\m_2)<\infty$. Thus, $$\reg_R(\sk)=\reg_R(\m_1 \oplus \m_2) -1 = \max\{\reg_R(\m_1), \reg_R(\m_2)\} -1 <\infty.$$ 

(ii) $\Rightarrow$ (iii): Suppose $\reg_R(\sk)=s<\infty$, and let $\beta^R_{i,s+i}(\sk) \neq 0$. 
By Remark \ref{rmk:Omegas}(b), since $\Omega_i^R(\sk)$ has a minimal generator in degree $s+i$, we have $\m_1(-(s+i)) \mid \Omega_{i+1}^R(\sk)$ or $\m_2(-(s+i)) \mid \Omega_{i+1}^R(\sk)$. Without loss of generality, let $\m_1(-(s+i)) \mid \Omega_{i+1}^R(\sk)$.\\ 
Then $\reg_R(\m_1(-(s+i))) \leq \reg_R\left(\Omega_{i+1}^R(\sk)\right)\leq s+i+1$, where the last inequality is true since $\reg_R(\mathsf k) = s$. This forces $\reg_R(\m_1)=1$.

Since $\m_1(-(s+i)) \mid \Omega_{i+1}^R(\sk)$, we have $\m_2(-(s+i+1)) \mid \Omega_{i+2}^R(\sk)$. The same argument as above shows that $\reg_R(\m_2)=1$.\\ Thus we see that $\Omega_1^R(\sk)= \m_1 \oplus \m_2$ has a linear resolution over $R$. Hence $R$ is Koszul.

(iii) $\Rightarrow$ (i) is clear by taking $M = \mathsf k$. 
\end{proof}

\begin{example}{\rm Note that the implication (i) $\Rightarrow$ (ii) in  Proposition \ref{prop:koszulChar} is not true without the assumption that $R$ is a fibre product. For instance, if $R= \sk[X,Y]/\langle X^2, Y^3\rangle$, then $M=R/\langle X \rangle$ has a linear resolution over $R$ with $\pdim_R(M)=\infty$, but $\reg_R(\sk)=\infty$.
}\end{example}

\section{Betti Cones over Fibre Products}\label{Betti-Cone-of-Fibre-Products}

In this section, we study the equality $\BB_{\mathbb{Q}}^{\text{pure}}(R) = \BB_{\mathbb{Q}}(R)$. In Theorem \ref{second_syzygy_linear}, we show that for a pure $R$-module $M$, its second syzygy has a linear resolution. As a consequence, this says that the regularity of a pure $R$-module $M$ is determined by the first three columns of $\beta^R(M)$. In Corollary \ref{cor:polyFibreProduct} and Proposition \ref{prop:gorensteinArtinian}, we give two classes of rings for which $\BB_{\mathbb{Q}}^{\text{pure}}(R) \neq \BB_{\mathbb{Q}}(R)$.
Finally, we prove our main result, Theorem \ref{MaintheoremFibreProducts}, which says that if $R$ is a fibre product with $\depth(R)= 1$ and $\BB_{\mathbb{Q}}^{\text{pure}}(R) = \BB_{\mathbb{Q}}(R)$, then $R$ is Cohen-Macaulay with $H_R(z)=(1+nz)/(1-z)$.

\begin{prop}\label{ExistenceProposition}
 Let $R=R_1\times_{\mathsf{k}} R_2$ be a Koszul $\sk$-algebra with $\depth(R_2)\geq 1$. Suppose there exists a pure $R_1$-module of type $(\delta_0,\delta_1,\delta_2,\ldots)$, where either $\delta_j=\delta_{j-1}+1$ or $\delta_j=\infty$ for all $j\geq 3$. Then there exists a pure $R$-module of type 
$(\delta_0,\delta_1,\delta_2,\delta_2+1,\delta_2+2,\ldots)$.
 \end{prop}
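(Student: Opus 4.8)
The plan is to start from a pure $R_1$-module $N$ of type $(\delta_0,\delta_1,\delta_2,\ldots)$ and construct the desired $R$-module as a suitable syzygy (up to twist) built out of $N$, exploiting the structural decomposition from Proposition \ref{FieldSyzygyLemma}. The key observation is that once we pass to the second syzygy $\Omega_2^R(N)$, Corollary \ref{cor:splitSyzygy} tells us it splits as a direct sum of an $R_1$-module and an $R_2$-module, and the $R_2$-summand is built from syzygies $\Omega_j^{R_2}(\sk)$. Since $R$ is Koszul, so are $R_1$ and $R_2$ by Corollary \ref{KoszulCorollary}, and the hypothesis $\depth(R_2)\geq 1$ gives us a linear regular element in $R_2$. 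The strategy is to use $N$ to control the first three columns (where the type is prescribed by $\delta_0,\delta_1,\delta_2$) and to arrange that from the third syzygy onward the resolution becomes linear, accounting for the tail $\delta_2+1,\delta_2+2,\ldots$.

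Concretely, I would first analyze $\Omega_i^R(N)$ using Proposition \ref{FieldSyzygyLemma}: we have $\Omega_i^R(N)\simeq \Omega_i^{R_1}(N)\oplus \Omega_i'\oplus \Omega_i''$, where $\Omega_i'$ is a sum of twists of $\Omega_j^{R_2}(\sk)$ and $\Omega_i''$ a sum of twists of $\Omega_j^{R_1}(\sk)$. Because $R_1$ and $R_2$ are Koszul, all the summands $\Omega_j^{R_1}(\sk)$ and $\Omega_j^{R_2}(\sk)$ have linear resolutions over $R$ by Proposition \ref{Prop-prep-for-koszul}. The plan is to track the degrees of generators of each piece: by Proposition \ref{prop:firstNonzeroEntries}(b), for $i<\pdim_{R_1}(N)+1$ every minimal generator of $\Omega_i^R(N)$ has degree at most $\delta_i$, and the $\Omega_i^{R_1}(N)$ summand contributes a generator in degree exactly $\delta_i$. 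The hypothesis that $\delta_j=\delta_{j-1}+1$ or $\infty$ for $j\geq 3$ is exactly what forces the $R_1$-part to become linear (or terminate) from stage three on, so the only contributions to higher syzygies that could create non-pure rows come from the $\Omega_i'$ and $\Omega_i''$ pieces, which are already linear. The goal is to verify that after removing the non-minimal or extraneous summands — or rather, by choosing the correct twist and splitting off a direct summand — one isolates a module whose Betti table is pure of type $(\delta_0,\delta_1,\delta_2,\delta_2+1,\delta_2+2,\ldots)$.

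The main technical step will be to exhibit the pure module explicitly rather than merely a module with the right partial information. I expect the cleanest route is to take $M$ to be the cokernel of an appropriate map, or equivalently $\Omega_2^R(N)$ (up to twist) after discarding the $R_2$-summands corresponding to $\Omega_j^{R_2}(\sk)$ for small $j$ and keeping a single linear strand: since $R$ is Koszul and $\reg_R(N)=\reg_{R_1}(N)$ by Proposition \ref{Prop-equality-of-reg}(a), the regularity is finite and controlled. One then builds the module so that its $0$th, $1$st, and $2$nd Betti numbers sit in degrees $\delta_0,\delta_1,\delta_2$ (inherited from $N$), while from the third syzygy onward the resolution is forced to be linear by the Koszul property together with the structural splitting. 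Verifying that $\beta_{i,j}^R(M)\neq 0$ for exactly one $j$ at each homological stage $i$, and that this $j$ equals $\delta_i$ for $i\leq 2$ and $\delta_2+(i-2)$ for $i\geq 3$, is the crux; Remark \ref{rmk:Omegas}(b) guarantees that a generator in degree $d$ at stage $i$ propagates to a generator in degree $d+1$ at stage $i+1$, which both establishes non-vanishing along the linear strand and, combined with the upper bounds from Proposition \ref{prop:firstNonzeroEntries}(b), pins down purity.

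The hard part will be ensuring purity at the transition — i.e.\ that no stray Betti number appears off the prescribed degree at homological stages $i\geq 3$, where contributions from $\Omega_i'$, $\Omega_i''$, and the $R_1$-part could in principle land in different degrees. Controlling this requires carefully matching the twists $d_{j,k}, d'_{j,k}$ against the linear growth of the $\delta_i$, and using the $\depth(R_2)\geq 1$ hypothesis to produce a regular linear form in $R_2$ that lets us truncate or modify the $R_2$-summand so that its contribution aligns exactly with the single linear strand rather than spreading across multiple rows. I anticipate that this degree-bookkeeping, rather than any conceptual difficulty, is where the real work lies.
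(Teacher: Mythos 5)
There is a genuine gap, and it is structural rather than a matter of bookkeeping. Your plan is to obtain the desired pure module from the syzygies of the $R_1$-module $N$ over $R$ (up to twist, after ``discarding'' summands). This cannot work whenever the type is not linear at the start, i.e.\ when $\delta_1>\delta_0+1$ or $\delta_2>\delta_1+1$, which is exactly the case the proposition is needed for. The reason: by Lemma \ref{first_syzygy_for_direct_sum}(a), an $R_1$-module generated in degree $d$ always acquires an $\m_2(-d)$ summand in its first $R$-syzygy, hence a generator in degree $d+1$; consequently (Proposition \ref{Prop-prep-for-koszul}) an $R_1$-module is pure over $R$ \emph{only if} it is linear over $R$. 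Moreover, by Corollary \ref{cor:splitSyzygy}, every $\Omega_i^R(-)$ with $i\geq 2$ splits as a direct sum of an $R_1$-module and an $R_2$-module, so the same obstruction applies to any syzygy, any twist of one, and any direct summand of one. Thus no module built by pruning the decomposition of Proposition \ref{FieldSyzygyLemma} can have the non-linear jumps $\delta_0\to\delta_1\to\delta_2$ in its Betti table. (Separately, ``discarding summands of a syzygy'' does not by itself produce a module with a prescribed Betti table: keeping a summand gives you that summand's own resolution, and here the summands are $R_1$- or $R_2$-modules, so the same obstruction recurs.) The module you want must be genuinely mixed, i.e.\ not annihilated componentwise by $\m_1$ or $\m_2$, and nothing in your proposal constructs such a module; the phrase ``cokernel of an appropriate map'' gestures at the right object but the map is never specified, and specifying it is the entire content of the proof.

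For comparison, the paper's construction is the following perturbation trick, which is where the hypothesis $\depth(R_2)\geq 1$ enters. Normalize $\delta_0=0$, let $M$ be the pure $R_1$-module with minimal presentation matrix $A$ (all entries homogeneous of degree $\delta_1$ in $\m_1$), and let $y\in\m_2$ be a linear $R_2$-regular element. Form $B=A'+y^{\delta_1}I_n$, where $A'$ is $A$ padded with zero rows if necessary so that $B$ has at least as many rows as columns, and set $N=\coker\bigl(G\xrightarrow{B}F\bigr)$ with $F$, $G$ free in degrees $0$, $\delta_1$. Then $N$ is generated in degree $0$ and $\Omega_1^R(N)$ is generated by the columns of $B$, all in degree $\delta_1$; since every column has nonzero components in both $\m_1F$ and $\m_2F$, Proposition \ref{first_syzygy} applies with $r=s=0$, and since $\pi_1$ of the column span is $\Omega_1^{R_1}(M)$ while $\pi_2$ of the column span is the \emph{free} $R_2$-module $y^{\delta_1}R_2^{\oplus n}$ (this is where regularity of $y$ is used, killing the $\Omega_1^{R_2}$ term), one gets $\Omega_2^R(N)\simeq\Omega_2^{R_1}(M)$. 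The hypothesis on $\delta_j$ for $j\geq 3$ says $\Omega_2^{R_1}(M)$ has a linear resolution over $R_1$, hence over $R$ by Proposition \ref{Prop-prep-for-koszul} (using Corollary \ref{KoszulCorollary}), giving the type $(\delta_0,\delta_1,\delta_2,\delta_2+1,\delta_2+2,\ldots)$. Your degree-tracking observations are compatible with this, but without the matrix perturbation they cannot close the argument.
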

 \begin{proof} Without loss of generality, we assume that $\delta_0=0$.
 Let $M$ be a pure $R_1$-module of type $(0,\delta_1,\delta_2,\dots)$, with $\beta_1^{R_1}(M)=n$, $A$ be an $m \times n$ minimal presentation matrix of $M$, and $y\in \m_2$ be a linear $R_2$-regular element. 

 We construct a matrix $B=[b_{ij}]$ from $A$ with entries in $\m$ as follows:
 
 (i) If $m \geq n$, then $ b_{ij}=\left\{\begin{array}{lc}
     a_{ij} & \text{ if } i\neq j \\
      a_{ij}+y^{\delta_1} &  \text{ if } i= j
 \end{array}\right..$
 
 (ii) If $m<n$, then let $B= A' + y^{\delta_1}I_{n}$, where $A'$ is the $n \times n$ matrix obtained from $A$ by adding zero rows at the bottom.

 Note that in either case, $B$ has at least as many rows as columns. 
By the purity of $M$, and the construction of $B$, all the nonzero entries of $B$ are homogeneous of degree $\delta_1$. Let $F$ and $G$ be free $R$-modules generated in degrees $0$ and $\delta_1$ respectively, be such that $\phi: G \to F$ is the natural map induced by $B$. 
We claim that $N = \coker(\phi)$ is pure of type $(0,\delta_1,\delta_2,\delta_2+1,\delta_2+2,\ldots)$.
 
Clearly $N$ is generated in degree $0$, and $\Omega_1^R(N) \simeq \ker(\phi) \subset \m F$ is the $R$-submodule generated by columns of $B$.
By Proposotion \ref{first_syzygy}, recall that $$ \Omega_2^R(N)\simeq \ker(\phi) \simeq \left(\bigoplus_{i=1}^r\m_2({d_i})\right)\oplus\Omega_1^{R_1}(\pi_1(\ker(\phi)) \oplus  \Omega_1^{R_2}(\pi_2(\ker(\phi)) \oplus \left(\bigoplus_{j=1}^s\m_1({d_j'})\right). $$
By construction of $B$, observe that $r=s=0$, $\pi_1(\ker(\phi))\simeq \Omega_1^{R_1}(M)$, and $\pi_2(\ker(\phi))= y^{\delta_1}R_2^{\oplus n}$, which is a free $R_2$-module. Hence $\Omega_2^R(N)\simeq \Omega_2^{R_1}(M)$, which is generated in degree $\delta_2$, and has a linear resolution over $R_1$. Since $R_1, R_2$ are Koszul, by Proposition \ref{Prop-prep-for-koszul}, we get that $\Omega_2^R(N)$ has a linear resolution over $R$, which proves the result. 
 \end{proof}

\begin{thm}{\label{second_syzygy_linear}}
 Let $R$ be a fibre product of standard graded $\sk$-algebras. If $M$ is a pure $R$-module of infinite projective dimension, then the following holds:
 \begin{enumerate}[\rm a)]
     \item $\Omega_2^R(M)$ has linear resolution.
     \item $R$ is Koszul.
 \end{enumerate}
 In particular, if $\BB_{\mathbb{Q}}^{\text{pure}}(R) = \BB_{\mathbb{Q}}(R)$, then $R$ is Koszul.
 \end{thm}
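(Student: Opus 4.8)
\textbf{Proof strategy for Theorem \ref{second_syzygy_linear}.}
The plan is to exploit the decomposition of $\Omega_2^R(M)$ established in Corollary \ref{cor:splitSyzygy}, which asserts that $\Omega_2^R(M)$ splits as a direct sum $N_1 \oplus N_2$, where $N_j$ is an $R_j$-module for $j = 1,2$. The purity of $M$ is inherited by its syzygies, so $\Omega_2^R(M)$ is itself a pure $R$-module, and consequently each summand $N_j$, being a direct summand, must have a pure resolution over $R$. The key leverage is that each $N_j$ is an $R_j$-module possessing a pure resolution over $R$, which is precisely the hypothesis of Proposition \ref{Prop-prep-for-koszul}.

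First I would invoke Proposition \ref{Prop-prep-for-koszul}: applied to the $R_1$-module $N_1$ (assuming it is nonzero), the existence of a pure resolution over $R$ forces $N_1$ to have a linear resolution over $R$, and simultaneously forces both $R_1$ and $R_2$ to be Koszul. Applying the same proposition to $N_2$ gives that $N_2$ has a linear resolution over $R$ as well. Since $\reg_R(N_1 \oplus N_2) = \max\{\reg_R(N_1), \reg_R(N_2)\}$ and a linear resolution means each summand is generated in a single degree with regularity equal to that degree, we must check that the generating degrees of $N_1$ and $N_2$ agree; this is where purity of the ambient module $\Omega_2^R(M)$ is essential, since purity forces all minimal generators of $\Omega_2^R(M)$ to sit in a single degree, hence $N_1$ and $N_2$ are generated in the same degree. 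Combining the two linear resolutions then yields that $\Omega_2^R(M)$ has a linear resolution, proving part (a). For part (b), the fact that $R_1$ and $R_2$ are Koszul combined with Corollary \ref{KoszulCorollary} (equivalence of (ii) and (iii)) immediately gives that $R = R_1 \times_{\mathsf k} R_2$ is Koszul.

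The main obstacle I anticipate is handling degenerate cases cleanly: at least one of $N_1, N_2$ must be nonzero since $\pdim_R(M) = \infty$ guarantees $\Omega_2^R(M) \neq 0$, but one summand could vanish, in which case I cannot directly extract the Koszul conclusion from that summand. By Remark \ref{directSummandRemark}, since $\pdim_R(M) = \infty$, for $i \geq 3$ there exist $k$ and $j$ with $\m_k(-j) \mid \Omega_i^R(M)$, and in fact both $\m_1$ and $\m_2$ appear as summands of syzygies at alternating stages; tracing this back one step shows that both $\m_1$ and $\m_2$ occur (up to shift) as direct summands of $\Omega_2^R(M) = N_1 \oplus N_2$, forcing each $N_j$ to be genuinely nonzero. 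This guarantees both applications of Proposition \ref{Prop-prep-for-koszul} are legitimate and that both $R_1$ and $R_2$ are forced to be Koszul.

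For the final assertion, suppose $\BB_{\mathbb{Q}}^{\text{pure}}(R) = \BB_{\mathbb{Q}}(R)$. Since $R$ is not a field (by the standing assumption $R_1, R_2 \not\simeq \sk$), there exists some finitely generated $R$-module of infinite projective dimension whose Betti table is extremal, and under the cone equality this extremal ray is spanned by a pure module. Concretely, the Betti table of $\sk$ itself decomposes in $\BB_{\mathbb{Q}}(R)$ as a nonnegative combination of Betti tables of pure modules; at least one such pure module $M$ must have infinite projective dimension, since $\sk$ does and any finite sum of finite-projective-dimension modules has bounded contributions in only finitely many homological degrees. Applying part (b) to this pure module $M$ of infinite projective dimension yields that $R$ is Koszul, as claimed.
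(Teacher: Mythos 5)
Your route is genuinely different from the paper's, and most of it is sound, but one step asserts something false. You claim that, since $\pdim_R(M)=\infty$, "tracing back" Remark \ref{directSummandRemark} shows that both $\m_1$ and $\m_2$ occur (up to shift) as direct summands of $\Omega_2^R(M)=N_1\oplus N_2$, so that both $N_1$ and $N_2$ are nonzero. This is refuted by the paper's own example following Remark \ref{directSummandRemark}: for $R=\sk[X,Y]/\langle XY\rangle \simeq \sk[X]\times_{\mathsf k}\sk[Y]$ and $M=R/\langle X\rangle$, the module $M$ is pure (indeed linear) of infinite projective dimension, yet up to a twist $\Omega_2^R(M)\simeq \m_2$, so $N_1=0$. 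The point is that Remark \ref{directSummandRemark} produces $\m_k$-summands in \emph{alternating} syzygies only: in this example $\m_1$ (suitably twisted) divides $\Omega_3^R(M)$, but that does not pull back to an $\m_1$-summand of $\Omega_2^R(M)$; there is no valid "tracing back one step."

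Fortunately, the worry this false claim was invented to address is not a real one, so the repair is one line. Proposition \ref{Prop-prep-for-koszul} (applied with the roles of $R_1$ and $R_2$ swapped when necessary) gives, from a \emph{single} nonzero summand $N_j$ with a pure resolution over $R$, both that $N_j$ has a linear resolution over $R$ and that $R_1$ \emph{and} $R_2$ are Koszul; since $\pdim_R(M)=\infty$ forces $\Omega_2^R(M)\neq 0$, at least one $N_j$ is nonzero, and a zero summand does not obstruct linearity of $N_1\oplus N_2$ (any nonzero summands are generated in the common degree $\delta_2$ by purity). With that fix your argument is complete, and it takes a different path from the paper's: the paper proves (a) by induction, using purity together with Remark \ref{directSummandRemark} to force each generating degree to increase by exactly one from $\Omega_2^R(M)$ onward, and then deduces (b) from Proposition \ref{prop:koszulChar} (finite regularity plus infinite projective dimension implies Koszul); you instead obtain (a) and (b) simultaneously from the splitting of Corollary \ref{cor:splitSyzygy} together with Proposition \ref{Prop-prep-for-koszul} and Corollary \ref{KoszulCorollary}, never invoking Proposition \ref{prop:koszulChar}. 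Your proof of the final assertion, decomposing $\beta^R(\sk)$ and noting that finitely many tables of finite projective dimension cannot have nonzero entries in all homological degrees, is also a clean alternative to the paper's extremal-ray argument.
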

 \begin{proof} Suppose $M$ has a pure resolution of type $(\delta_0,\delta_1,\delta_2,\ldots)$. Then $\Omega^R_j(M)$ is generated in degree $\delta_j$ for all $j$. In order to prove (i), we show by induction that $\delta_j=\delta_2+j-2$ for $j\geq 2$. This claim holds trivially for $j=2$. Suppose $\delta_j=\delta_2+j-2$ for some $j$.
 By Remark \ref{directSummandRemark},  there exists $i\in \{1,2\}$ such that $\m_i\mid \Omega_{j+1}^R(M)$, and hence $\Omega_{j+1}^R(M)$ has a generator of degree $\delta_j+1$. Thus, by purity of $M$, we see that $\Omega_{j+1}^R(M)$ is generated in degree $\delta_j+1=\delta_2+j-1$, proving (a).
 
 Since $\Omega_2^R(M)$ has a linear resolution, we have $\reg_R(\Omega_2^R(M))<\infty$. Hence, by using Proposition \ref{prop:koszulChar}, we get that $R$ is Koszul.
 
 Finally, observe that since $R$ is not regular, there exists a module of infinite projective dimension.  This implies that there exists a module $M$, with $\pdim_R(M)= \infty$, such that $\beta^R(M)$ spans an extremal ray of $\BB_{\mathbb{Q}}(R)$. If $\BB_{\mathbb{Q}}^{\text{pure}}(R)=\BB_{\mathbb{Q}}(R)$, then $M$ is pure, and hence by (b), $R$ is Koszul.
 \end{proof}

In \cite{Ku17}, it is proved that for any given standard graded $\sf k$-algebra $R$, if $\BB_{\mathbb{Q}}^{\text{pure}}(R)=\BB_{\mathbb{Q}}(R)$, then $R$ is Koszul, by using completely different techniques. The proof of Koszulness in Theorem \ref{second_syzygy_linear} above is easier, but specific to fibre products.
 \begin{rmk}\label{regularityRemark}{\rm
 Let $R$ be a fibre product of $\sk$-algebras and $M$ a pure $R$-module. 
 \begin{enumerate}[{\rm a)}]
     \item If $M$ is a pure $R$-module, since $\Omega_2^R(M)$ has a linear resolution, we see that the regularity of $M$ can be determined from the first three columns of Betti table of $M$, i.e., $$\reg_R(M)=\max\{j-i:\beta_{i,j}(M)\neq 0 \text{ for some } j, \text{ and } i\leq 2\}.$$
     \item The conclusion of {\rm (a)} holds for all $R$-modules $M$ such that $\beta^R(M) \in \BB_{\mathbb Q}^{\text{pure}}(R)$. As a consequence, if $\BB_{\mathbb Q}^{\text{pure}}(R) = \BB_{\mathbb Q}(R)$, then the conclusion of {\rm (a)} holds for all $R$-modules. 
\end{enumerate}
} \end{rmk}

If $M$ is a pure $R_1$-module of type $\delta$, then the following proposition gives a necessary condition on the $\delta$ for Betti table of $M$ over $R$ to be in the pure cone $\mathbb B _{\mathbb Q}^{\text{pure}}(R)$.
 \begin{prop}\label{DecompositionLemma}
 Let $R$ be a Koszul algebra, and $M$ be a pure $R_1$-module of type $\delta=(\delta_0,\delta_1,\delta_2,\delta_3,\dots)$.
 If $\beta^R(M)\in\BB_{\mathbb Q}^{\text{pure}}(R)$, then $\Omega_2^{R_1}(M)$ has a linear resolution, i.e.,
 for $i \geq 3$, $\delta_i=\infty$ or $\delta_{i-1} = \delta_i-1$.
 \end{prop}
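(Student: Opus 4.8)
The plan is to analyze the Betti table of $M$ over $R$ using Proposition \ref{FieldSyzygyLemma}, which gives the decomposition $\Omega_i^R(M) \simeq \Omega_i^{R_1}(M) \oplus \Omega_i' \oplus \Omega_i''$, and then to argue that if the $R_1$-resolution of $M$ fails to be eventually linear past the second syzygy, the Betti table $\beta^R(M)$ cannot lie in $\mathbb B_{\mathbb Q}^{\text{pure}}(R)$. First I would reduce to showing that $\Omega_2^{R_1}(M)$ has a linear resolution over $R_1$; by Corollary \ref{Cor-inequality-of-reg}(b) and the Koszulness of $R$, linearity over $R_1$ and over $R$ coincide for $R_1$-modules (Proposition \ref{Prop-prep-for-koszul}), so the condition on the $\delta_i$ for $i \geq 3$ is exactly the statement that $\Omega_2^{R_1}(M)$ has a linear resolution. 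The equivalent reformulation ``$\delta_i = \infty$ or $\delta_{i-1} = \delta_i - 1$ for $i \geq 3$'' is precisely linearity of the resolution of $\Omega_2^{R_1}(M)$ (shifted to start in degree $\delta_2$).

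Next I would suppose for contradiction that $\Omega_2^{R_1}(M)$ does \emph{not} have a linear resolution, so there is a smallest index $i \geq 3$ with $\delta_{i-1} \neq \delta_i - 1$ and $\delta_i \neq \infty$; equivalently, $\beta^{R_1}_{i, \delta_{i-1}+1}(M) = 0$ while $\beta^{R_1}_{i, \delta_i}(M) \neq 0$ with $\delta_i \geq \delta_{i-1} + 2$. The idea is to extract from $\beta^R(M)$ enough structure to show that no finite nonnegative-rational combination of pure Betti tables can reproduce it. The key input is Proposition \ref{prop:firstNonzeroEntries} together with Remark \ref{rmk:Omegas}(b): because $\Omega_{i-1}^R(M)$ has its top generator in degree $\delta_{i-1}$, the summand $\Omega'_i \oplus \Omega''_i$ contributes generators in degree at most $\delta_{i-1}+1$, while $\Omega_i^{R_1}(M)$ contributes a generator in degree $\delta_i \geq \delta_{i-1}+2$. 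Thus the $i^{th}$ column of $\beta^R(M)$ has a nonzero entry in a row that ``skips ahead,'' creating a gap: there is a row $j_0$ with $\beta^R_{i-1, \, \delta_{i-1}+i-1}(M) \neq 0$ in its natural diagonal but $\beta^R_{i, \, (\delta_{i-1}+1)+i}(M)$ and $\beta^R_{i, \delta_i + i}$ occupying distinct rows with an intermediate row empty.

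The main obstacle will be converting this numerical gap into an obstruction to membership in $\mathbb B_{\mathbb Q}^{\text{pure}}(R)$. Here I would invoke the structure of pure tables: any pure summand $\beta^R(N)$ has, in each homological degree $i$, a single nonzero row, and successive nonzero rows are governed by Remark \ref{rmk:Omegas}(b) forcing the diagonal to advance by at most one per step once a generator appears. I expect to use a positivity or leading-term argument on the relevant entries, exploiting that in $\beta^R(M)$ the row containing the ``new'' generator of $\Omega_i^{R_1}(M)$ in degree $\delta_i$ lies strictly below the row that every contributing pure table would have to occupy given the pattern of $\Omega'_i \oplus \Omega''_i$; this is incompatible with writing $\beta^R(M)$ as $\sum c_t \beta^R(N_t)$ with $c_t \geq 0$. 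The delicate part is handling the overlap between the $\Omega_i^{R_1}(M)$ strand and the $\Omega'_i \oplus \Omega''_i$ strand, and ruling out cancellation-free reconstruction by pure tables whose own diagonals might align with either strand; I anticipate the cleanest route is to pin down a specific pair of entries (one on the ``main'' diagonal, one ``skipped'') whose ratio is forced by any pure decomposition but violated by $\beta^R(M)$, thereby yielding the contradiction.
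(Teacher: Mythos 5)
Your plan follows the same overall route as the paper: decompose $\beta^R(M)=\sum_t c_t\beta^R(N_t)$ into pure tables, constrain where each pure summand can have nonzero entries, and combine positivity of the $c_t$ with the degree bound of Proposition \ref{prop:firstNonzeroEntries}(b). But there is a genuine gap exactly at the decisive step. You justify the key structural fact about the pure summands --- that their diagonals advance by one degree per homological step --- by Remark \ref{rmk:Omegas}(b). That remark (whose notation comes from Proposition \ref{FieldSyzygyLemma}) applies only to modules annihilated by $\m_2$, i.e.\ to $R_1$-modules. The pure modules $N_t$ occurring in a decomposition of $\beta^R(M)$ are arbitrary finitely generated graded pure $R$-modules, and nothing makes them $R_1$- or $R_2$-modules, so the cited justification fails precisely where the substance of the proof lies. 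The fact you need is Theorem \ref{second_syzygy_linear}(a): every pure $R$-module of infinite projective dimension has a second syzygy with a linear resolution (and any $N_t$ with $\beta^R_{i,\delta_i}(N_t)\neq 0$ for some $i\geq 3$ automatically has infinite projective dimension over the fibre product). That theorem rests on Corollary \ref{cor:splitSyzygy} and Remark \ref{directSummandRemark}, which are the correct substitutes for Remark \ref{rmk:Omegas}(b) when the module is not a module over one of the factors; this is exactly what the paper invokes.

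Second, the closing step you leave as ``anticipated'' is much simpler than the ratio argument you sketch, and no contradiction framework is needed. Since $\beta^{R_1}_{i,\delta_i}(M)\neq 0$, Proposition \ref{FieldSyzygyLemma} gives $\beta^R_{i,\delta_i}(M)\neq 0$, so by positivity some $N_t$ has $\beta^R_{i,\delta_i}(N_t)\neq 0$. Linearity of the resolution of $\Omega_2^R(N_t)$ then forces $\beta^R_{i-1,\delta_i-1}(N_t)\neq 0$, hence $\beta^R_{i-1,\delta_i-1}(M)\neq 0$. By Proposition \ref{prop:firstNonzeroEntries}(b) every minimal generator of $\Omega_{i-1}^R(M)$ has degree at most $\delta_{i-1}$, so $\delta_i-1\leq \delta_{i-1}$; minimality of the resolution gives $\delta_{i-1}\leq\delta_i-1$, hence equality. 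This is the paper's proof, run directly rather than by contradiction. Your gap-plus-positivity picture is the same argument in reverse, and it closes once you replace the faulty citation by Theorem \ref{second_syzygy_linear} and pin the obstruction on the single entry $(i-1,\delta_i-1)$, which any pure table covering $(i,\delta_i)$ must occupy but which is zero in $\beta^R(M)$; no ratios of entries are involved.
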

 \begin{proof}
 Since $\beta^R(M)\in\BB_{\mathbb Q}^\text{pure}(R)$, there exist pure $R$-modules $M_1, \dots, M_r$ and $c_1,\dots, c_r\in \BQ_{>0}$ such that  $\beta^R(M)=\sum_j c_j\beta^R(M_j)$. 
Fix $i\geq 3$. If $\delta_i =\infty$, then we are done. If not, then we show that $\delta_{i-1} = \delta_i-1$.

 Since $\beta^{R_1}_{i,\delta_i}(M) \neq 0$, by Proposition \ref{FieldSyzygyLemma}, we get  $\beta_{i,\delta_i}^R(M) \neq 0$, and hence $\beta^R_{i,\delta_i}(M_j)\neq 0$ for some $j$. Note that by Theorem \ref{second_syzygy_linear}, $\Omega_2^R(M_j)$ has a linear resolution. Therefore, $\beta_{i-1,\delta_i-1}^R(M_j) \neq 0$, and hence $\beta_{i-1,\delta_i-1}^R(M) \neq 0$. Thus, by Proposition \ref{prop:firstNonzeroEntries}(b), 
 we have $\delta_i-1\leq \delta_{i-1}$. Since $\delta_{i-1}\leq \delta_i-1$, we are done.
 \end{proof}

Eisenbud-Schreyer \cite{ES09} proved that $ \BB_{\mathbb Q}(R)=\BB_{\mathbb Q}^\text{pure}(R)$ if $R$ is a polynomial ring. We observe that this is false in general for fibre product of polynomial rings. This follows from the next result.

\begin{cor}\label{cor:polyFibreProduct} Let $R=R_1\times_{\mathsf k} R_2$ with $\depth(R_2)\geq 3$. Then $\BB_{\mathbb Q}^{\text{pure}}(R) \neq \BB_{\mathbb Q}(R)$. 
 \end{cor}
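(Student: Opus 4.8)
The plan is to argue by contradiction, reducing everything to Proposition \ref{DecompositionLemma} through a single explicitly constructed module. Suppose $\BB_{\mathbb Q}^{\text{pure}}(R)=\BB_{\mathbb Q}(R)$. Then the ``in particular'' clause of Theorem \ref{second_syzygy_linear} forces $R$ to be Koszul, which is exactly the hypothesis needed to invoke Proposition \ref{DecompositionLemma}. Since the fibre product is symmetric, $R = R_1 \times_{\mathsf k} R_2 \cong R_2 \times_{\mathsf k} R_1$, so the conclusion of Proposition \ref{DecompositionLemma} applies verbatim to pure $R_2$-modules; this is the version I would use, as $R_2$ is the factor carrying the depth hypothesis.

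Next I would produce a pure $R_2$-module whose second syzygy fails to be linear, so as to violate Proposition \ref{DecompositionLemma}. Because $\depth(R_2)\geq 3$ and $\sk$ is infinite, there is a regular sequence of linear forms $y_1,y_2,y_3\in\m_2$; then $y_1^2,y_2^2,y_3^2$ is again a regular sequence, with all members in $\m_2^2$. Set $M = R_2/\langle y_1^2,y_2^2,y_3^2\rangle$. The Koszul complex on $y_1^2,y_2^2,y_3^2$ is then a minimal free resolution
\[ 0 \to R_2(-6) \to R_2(-4)^3 \to R_2(-2)^3 \to R_2 \to M \to 0, \]
so $M$ is pure of type $(0,2,4,6,\infty,\infty,\ldots)$ over $R_2$, and $\Omega_2^{R_2}(M)$ is generated in degree $4$ with its first syzygies in degree $6$; in particular it does not have a linear resolution.

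Finally I would run the contradiction. Viewing $M$ as an $R$-module via $R\to R_2$, we have $\beta^R(M)\in\BB_{\mathbb Q}(R) = \BB_{\mathbb Q}^{\text{pure}}(R)$. The ($R_2$-version of) Proposition \ref{DecompositionLemma} would then force the type $\delta=(0,2,4,6,\infty,\ldots)$ to satisfy $\delta_i=\infty$ or $\delta_{i-1}=\delta_i-1$ for all $i\geq 3$; but at $i=3$ we have $\delta_2=4\neq 5=\delta_3-1$, a contradiction. Hence $\BB_{\mathbb Q}^{\text{pure}}(R)\neq\BB_{\mathbb Q}(R)$.

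I expect the points needing care to be bookkeeping rather than genuine obstacles: justifying that a length-$3$ regular sequence of linear forms exists in $\m_2$ (standard, using $\depth(R_2)\geq 3$, the infinite field, and prime avoidance), that powers of a regular sequence remain regular so the displayed Koszul complex is exact and minimal, and that Proposition \ref{DecompositionLemma} applies with the roles of $R_1$ and $R_2$ interchanged. The conceptual content is entirely in the choice of $M$: the all-quadric complete intersection manufactures exactly the degree gap (from $\delta_2=4$ to $\delta_3=6$) that a linear second syzygy cannot accommodate.
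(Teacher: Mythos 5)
Your proposal is correct and matches the paper's proof essentially step for step: both reduce to the Koszul case via Theorem \ref{second_syzygy_linear}, then take the complete intersection $M = R_2/\langle y_1^2, y_2^2, y_3^2 \rangle$ of quadrics on a linear regular sequence (pure of type $(0,2,4,6)$ via the Koszul complex, so with non-linear second syzygy) and apply Proposition \ref{DecompositionLemma} with the roles of the two factors interchanged to conclude $\beta^R(M) \notin \BB_{\mathbb Q}^{\text{pure}}(R)$. The details you spell out --- existence of the linear regular sequence, regularity of the squares, minimality of the Koszul resolution, and the symmetry $R_1 \times_{\sk} R_2 \cong R_2 \times_{\sk} R_1$ --- are exactly the routine verifications the paper leaves implicit.
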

 \begin{proof}
By Theorem \ref{second_syzygy_linear}, if $R$ is not Koszul, then $\BB_{\mathbb Q}^{\text{pure}}(R) \neq \BB_{\mathbb Q}(R)$. Hence, assume that $R$ is Koszul. 

Let $x_1,x_2,x_3 \in \m_2$ be a linear $R_2$-regular sequence, and $M=R/\langle x_1^2, x_2^2, x_3^2\rangle$.   
Then $\Omega_2^{R_2}(M)$ does not have a linear resolution, and hence, by  Proposition \ref{DecompositionLemma}, we get 
  $\beta^R(M)\notin \BB_{\mathbb Q}^\text{pure}(M).$ 
\end{proof}

 Gibbons \cite{Gi13} showed that if $R_1=\sk[X,Y]/\langle X^2,Y^2\rangle$, then $\BB_{\mathbb Q}(R_1)=\BB_{\mathbb Q}^\text{pure}(R_1)$. However, given any $R_2$, if $R=R_1 \times_{\mathsf{k}} R_2$, then  $\BB_{\mathbb Q}(R)\neq \BB_{\mathbb Q}^\text{pure}(R)$. This is captured in the following proposition.  

\begin{prop}\label{prop:gorensteinArtinian}
Let $R_1$ be a Gorenstein Artin standard graded $\sk$-algebra such that $\soc(R_1)$ is generated in degree $n\geq 2$. If $R= R_1 \times_{\mathsf{k}}R_2$, then $\BB_{\mathbb Q}(R)\neq \BB_{\mathbb Q}^\text{pure}(R)$. 
\end{prop}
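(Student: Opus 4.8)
The plan is to exhibit an explicit pure $R_1$-module $M$ whose Betti table over $R$ fails the necessary condition for membership in $\BB_{\mathbb Q}^{\text{pure}}(R)$ established in Proposition \ref{DecompositionLemma}. Since $R_1$ is Gorenstein Artin, we may take $M = R_1$ itself (viewed as an $R_1$-module), whose minimal free resolution over $R_1$ is self-dual with Betti numbers $\beta_0^{R_1}(R_1) = 1$ and terminal Betti number realized by the socle. The key point is that the hypothesis $\soc(R_1)$ being generated in a single degree $n \geq 2$ forces the top of the minimal free resolution of $\sk$ (equivalently, the structure of $\Omega_i^{R_1}(\sk)$) to have a jump in generator degrees that is incompatible with the linearity of $\Omega_2^{R_1}$ required by Proposition \ref{DecompositionLemma}. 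More precisely, I would consider $M = \sk$ as an $R_1$-module, or a suitable pure $R_1$-module built from the Gorenstein resolution, and compute its type $\delta$ over $R_1$.

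First I would reduce to the Koszul case: by Theorem \ref{second_syzygy_linear}, if $R$ is not Koszul then $\BB_{\mathbb Q}^{\text{pure}}(R) \neq \BB_{\mathbb Q}(R)$ already, so I may assume $R$ is Koszul, which by Corollary \ref{KoszulCorollary} means both $R_1$ and $R_2$ are Koszul. Now the tension arises: a Gorenstein Artin Koszul algebra whose socle sits in degree $n \geq 2$ is a complete intersection of quadrics only in special cases, and in general the self-duality of the resolution of $R_1$ over $R_1$ means the Betti table of $R_1$ as an $R_1$-module is symmetric, with its last nonzero entry forced into degree $n$ by the socle. The plan is to produce a pure $R_1$-module $M$ of type $\delta = (\delta_0, \delta_1, \delta_2, \ldots)$ for which $\Omega_2^{R_1}(M)$ does \emph{not} have a linear resolution, i.e. for which $\delta_{i-1} \neq \delta_i - 1$ for some $i \geq 3$. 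Then Proposition \ref{DecompositionLemma} immediately gives $\beta^R(M) \notin \BB_{\mathbb Q}^{\text{pure}}(R)$, while Proposition \ref{FieldSyzygyLemma} guarantees $\beta^R(M) \in \BB_{\mathbb Q}(R)$ (it is the Betti table of an honest $R$-module), yielding the desired inequality of cones.

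The main obstacle I anticipate is identifying the correct pure $R_1$-module and verifying that its resolution over $R_1$ has the required non-linear jump. The natural candidate exploits Gorenstein duality: since $R_1$ is Gorenstein Artin with socle in degree $n \geq 2$, the canonical module is $R_1(-n)$, and one expects that applying $\Hom_{R_1}(-, R_1)$ to the resolution of $\sk$ produces a pure module whose second syzygy cannot be linear precisely because the socle degree $n$ exceeds $1$. Concretely, I would examine $\sk$ as an $R_1$-module: because $R_1$ is Gorenstein but \emph{not} Koszul-with-linear-$\sk$-resolution in a way forcing minimal multiplicity, the gap between the generation degrees of consecutive syzygies of $\sk$ over $R_1$ must widen at some stage, contradicting linearity of $\Omega_2^{R_1}(\sk)$. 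The delicate verification is that this gap genuinely occurs at stage $i \geq 3$ and is detected by $\delta_{i-1} \neq \delta_i - 1$; this is where the hypothesis $n \geq 2$ (rather than $n = 1$, the minimal multiplicity case handled by Remark \ref{dim0and1extremals}) is essential, and I would make it precise by invoking the structure of the Gorenstein resolution together with Koszulness forcing $\sk$ to have a linear resolution only when the defining relations are quadratic with socle in degree equal to the number of quadric relations.
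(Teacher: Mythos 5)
Your global strategy is the same as the paper's: reduce to the case that $R$ (hence $R_1$, $R_2$) is Koszul via Theorem \ref{second_syzygy_linear} and Corollary \ref{KoszulCorollary}, then exhibit a pure $R_1$-module whose type violates the condition of Proposition \ref{DecompositionLemma}. However, neither of your concrete candidates can do the job, and this is a genuine gap. The module $M=R_1$ is free over $R_1$, so $\Omega_2^{R_1}(R_1)=0$ and there is nothing to test. The module $M=\sk$ fails for a more serious reason: once you have reduced to the case that $R_1$ is Koszul, $\sk$ has a \emph{linear} resolution over $R_1$ by definition of Koszulness, so $\Omega_2^{R_1}(\sk)$ is linear and Proposition \ref{DecompositionLemma} is satisfied by $\sk$, not violated. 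Your claim that ``the gap between the generation degrees of consecutive syzygies of $\sk$ over $R_1$ must widen at some stage'' directly contradicts the Koszul hypothesis you just imposed; indeed Koszul Gorenstein Artin algebras with socle degree $n\geq 2$ exist in abundance (e.g.\ $\sk[X,Y]/\langle X^2,Y^2\rangle$, with $n=2$), so no contradiction of this kind is available. The proposition is not asserting that such $R$ fails to be Koszul; it asserts that the cones differ \emph{even when} $R$ is Koszul.

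The construction that works is the one you mention only in passing (``applying $\Hom_{R_1}(-,R_1)$ to the resolution of $\sk$'') but never carry out, and the details matter. The paper takes $A$ to be the presentation matrix of $\Omega_1^{R_1}(\sk)$, i.e.\ the second differential in the linear resolution $\cdots \to R_1^{\beta_2}(-2)\xrightarrow{A} R_1^{\beta_1}(-1)\to R_1\to \sk\to 0$, and sets $N=\coker(A^t)$. Because $R_1$ is Gorenstein Artin, $\Hom_{R_1}(-,R_1)$ is exact, and dualizing yields the exact sequence
\[ 0 \to \soc(R_1)(-2) \to R_1(-2) \to R_1^{\beta_1}(-1) \xrightarrow{A^t} R_1^{\beta_2} \to N \to 0. \]
Since $\soc(R_1)\simeq \sk(-n)$ and $R_1$ is Koszul, splicing the linear resolution of $\sk(-n-2)$ onto the left end shows $N$ is a pure $R_1$-module of type $(0,1,2,n+2,n+3,\ldots)$. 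The hypothesis $n\geq 2$ now gives $\delta_3-\delta_2=n>1$, so $\Omega_2^{R_1}(N)$ does not have a linear resolution, and Proposition \ref{DecompositionLemma} yields $\beta^R(N)\notin \BB_{\mathbb Q}^{\text{pure}}(R)$. Without this (or some equivalent) explicit pure module exhibiting a degree jump at stage $i\geq 3$, your argument does not close.
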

\begin{proof}
As in the proof of the previous corollary, we may assume that $R$ is Koszul. Therefore, $R_1$ is Koszul by Corollary \ref{KoszulCorollary}, and hence $\sk$ has an $R_1$-linear resolution of the form
\[ \mathbb F_{\bullet} : \cdots 
\to R_1^{\beta_2}(-2) \xrightarrow[]{A} R_1^{\beta_1}(-1) \to R_1 \to \sk \to 0.\]
Let $A$ be a presentation matrix of $\Omega_1^{R_1}(\sk)$. Let $N= \coker(A^t)$, where $A^t$ denotes the transpose of $A$. Since $R_1$ is Gorenstein Artin, $\Hom(\_\_, R_1)$ is exact. Hence, we get the following exact complex
\[        0 \to \Hom_{R_1}(\sk, R_1)(-2) \simeq \soc(R_1)(-2)            \to R_1(-2)       \to R_1^{\beta_1}(-1)        \xrightarrow[]{A^t}  R_1^{\beta_2} \to  N \to  0.\]
Note that $N$ is a pure $R_1$-module, since $\soc(R_1) \simeq \sk(-n)$ and $R_1$ is Koszul. However, since $n \geq 2$, we see that $\Omega_2^{R_1}(N)$ does not have a linear resolution. Hence, by Proposition \ref{DecompositionLemma}, $\beta^R(N) \not \in \BB_{\mathbb{Q}}^{\text{pure}}(R)$, proving $\BB_{\mathbb Q}(R)\neq \BB_{\mathbb Q}^\text{pure}(R)$. 
\end{proof}

 Corollary \ref{cor:polyFibreProduct} and Proposition \ref{prop:gorensteinArtinian} were immediate consequences of Proposition \ref{DecompositionLemma} in which we saw examples where $\BB_{\mathbb Q}^{\text{pure}}(R) \neq \BB_{\mathbb Q}(R)$. In fact, we prove a more general result in Theorem \ref{MaintheoremFibreProducts}. Before that, we prove the following technical lemma.

\begin{lemma}\label{dimension} Let $R= R_1\times_{\mathsf{k}} R_2$ be a Koszul $\sk$-algebra with $\depth(R)=1$. Let $x_j$ be a linear $R_j$-regular element for $j=1,2$,  $J=\langle x_1, x_2\rangle$, and $M$ be an $R$-module generated in degree zero. Then 
\begin{enumerate}[{\rm a)}]
\item $J$ has a linear resolution.

\item If $M$ has a linear resolution with  $\beta_{i,i}^R(M)=c\beta_{i,i}^R(R/J)$ for all $i\geq 2$ for some $c\in \mathbb{Q}_{>0}$, then $\codim(M)\leq 1$.
\end{enumerate} 
\end{lemma}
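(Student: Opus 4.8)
The plan is to prove (a) and (b) separately, with the computation of $\Omega_1^R(J)$ from part (a) feeding directly into part (b), and with the Hilbert-series bookkeeping forced by a linear resolution doing the real work in (b).

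For (a), the point is that $\depth(R)=1$ forces $\depth(R_1),\depth(R_2)\geq 1$, so $x_1,x_2$ are genuine nonzerodivisors and $\langle x_1\rangle\simeq R_1(-1)$, $\langle x_2\rangle\simeq R_2(-1)$ are free. I would view $J=\langle x_1,x_2\rangle\subset \m=\m_1\oplus\m_2$ and apply Proposition \ref{first_syzygy}: in the notation of Lemma \ref{kernel_of_pi_1}, $x_1$ is an $\underline x$-generator and $x_2$ a $\underline y$-generator with no mixed generators, so $\pi_1(J)=\langle x_1\rangle$ and $\pi_2(J)=\langle x_2\rangle$. By regularity the two middle summands $\Omega_1^{R_1}(\pi_1(J))$ and $\Omega_1^{R_2}(\pi_2(J))$ vanish, leaving $\Omega_1^R(J)\simeq \m_1(-1)\oplus\m_2(-1)=\m(-1)$. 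Since $R$ is Koszul and $\m=\Omega_1^R(\sk)$, the module $\m$ (hence $\m(-1)$) has a linear resolution; as $J$ is generated in degree $1$ with $\Omega_1^R(J)=\m(-1)$ generated in degree $2$, it follows that $J$ has a linear resolution.

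For (b), I would first record the Betti numbers of $R/J$. From $0\to J\to R\to R/J\to 0$ and $\Omega_1^R(J)=\m(-1)=\Omega_1^R(\sk)(-1)$ one gets $\Omega_i^R(R/J)\simeq\Omega_{i-1}^R(\sk)(-1)$ for $i\geq 2$, whence $R/J$ is linear and $\beta_i^R(R/J)=\beta_{i-1}^R(\sk)$ for $i\geq 2$. Because $M$ and $R/J$ both have linear resolutions their Betti numbers lie on the diagonal, so the hypothesis reads $\beta_i^R(M)=c\,\beta_{i-1}^R(\sk)$ for $i\geq 2$. Writing $\beta_0=\beta_0^R(M)$, $\beta_1=\beta_1^R(M)$ and $P(z)=\mathcal P^R_{\sk}(z)$, summing into Poincaré series gives $\mathcal P^R_M(z)=\beta_0+(\beta_1-c)z+cz\,P(z)$.

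The bridge to dimension is the identity $H_N(z)=H_R(z)\,\mathcal P^R_N(-z)$ valid for any $N$ with a linear resolution generated in degree $0$ (it is just additivity of Hilbert series along $F_i=R(-i)^{\beta_i}$, a finite sum in each degree). Applying it to $N=\sk$ gives $H_R(z)P(-z)=1$, and applying it to $N=M$ yields the closed form $H_M(z)=H_R(z)\bigl[\beta_0-(\beta_1-c)z\bigr]-cz$. Writing $H_R(z)=f(z)/(1-z)^d$ with $d=\dim R\geq 1$ and $f(1)>0$, and clearing denominators, the numerator $g(z)=f(z)[\beta_0-(\beta_1-c)z]-cz(1-z)^d$ satisfies $g(1)=f(1)(\beta_0-\beta_1+c)$. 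If $\beta_0-\beta_1+c\neq 0$ then $g(1)\neq 0$, the pole at $z=1$ has order $d$, so $\dim M=\dim R$ and $\codim(M)=0$. If $\beta_0-\beta_1+c=0$, then $\beta_0-(\beta_1-c)z=\beta_0(1-z)$ cancels exactly one factor $(1-z)$: the pole order drops to $d-1$ (the residual numerator is $\beta_0 f(1)\neq 0$ when $d\geq 2$, and $H_M$ becomes a polynomial, so $\dim M=0$, when $d=1$), giving $\codim(M)=1$. In all cases $\codim(M)\leq 1$. The main obstacle is exactly this last translation from a diagonal-Betti-number condition to a dimension statement; the key insight that makes it succeed is that the hypothesis forces the correction factor multiplying $H_R(z)$ to be the \emph{linear} polynomial $\beta_0-(\beta_1-c)z$, which can annihilate at most one power of $(1-z)$, and that is precisely what caps the codimension at $1$. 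A minor point to check along the way is that $M\neq 0$ and the series are legitimate, which follows since $R$, a nontrivial fibre product, is not regular, so $\beta_i^R(\sk)>0$ for all $i$.
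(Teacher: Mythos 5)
Your proof is correct, and it shares the paper's overall strategy---compute $\Omega_1^R(J)\simeq\m_1(-1)\oplus\m_2(-1)$, turn the diagonal Betti hypothesis into the Hilbert-series identity $H_M(z)=\bigl[\beta_0-(\beta_1-c)z\bigr]H_R(z)-cz$, and bound the codimension by analysing this identity at $z=1$---but your route through the middle is genuinely different. For (a), the paper applies Lemma \ref{first_syzygy_for_direct_sum} to $J=\langle x_1\rangle\oplus\langle x_2\rangle$ and then deduces linearity by descending to the factors (Corollary \ref{KoszulCorollary} together with Proposition \ref{Prop-equality-of-reg}); you instead use Proposition \ref{first_syzygy} and get linearity directly from the fact that $\m=\Omega_1^R(\sk)$ has a linear resolution over the Koszul ring $R$, which is slightly more economical. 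For (b), the paper never forms the Poincar\'e series of $M$: it converts the hypothesis into the equality $H_{\Omega_2^R(M)}(z)=c\,H_{\Omega_2^R(R/J)}(z)$, computes $H_{\Omega_2^R(R/J)}(z)=z(H_R(z)-1)$ from $\Omega_2^R(R/J)\simeq\m_1(-1)\oplus\m_2(-1)$, and plugs this into the four-term sequence $0\to\Omega_2^R(M)\to R(-1)^{\beta_1}\to R^{\beta_0}\to M\to 0$. You instead rewrite the hypothesis as $\beta_i^R(M)=c\,\beta_{i-1}^R(\sk)$ for $i\geq 2$ and use the generating-function identity $H_N(z)=H_R(z)\,\mathcal{P}^R_N(-z)$ for linear modules, together with its Koszul special case $H_R(z)\,\mathcal{P}^R_{\sk}(-z)=1$ (Fr\"oberg's formula). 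Both derivations land on literally the same identity, and your endgame (the case split on $\beta_0-\beta_1+c$ and pole orders at $z=1$) is equivalent to the paper's divisibility argument $(1-z)^{d-m}\mid\beta_0-(\beta_1-c)z$: your first case is $d-m=0$, and your second is $d-m\leq 1$ after the single possible cancellation. What the paper's version buys is that it stays within finite exact-sequence bookkeeping and never needs Fr\"oberg's identity; what yours buys is a more transparent use of Koszulness and a one-stroke packaging of the tail of the resolution. Your explicit check that $M\neq 0$ (hence $\beta_0\geq 1$), forced by $\beta_{2,2}^R(R/J)\neq 0$ and $c>0$, is a point the paper leaves implicit.
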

 \begin{proof}
a) Note that $J= \langle x_1 \rangle \oplus \langle x_2 \rangle$, where $\langle x_1 \rangle$ is an $R_1$-module and $\langle x_2 \rangle$ is an $R_2$-module. Since $x_j$ is $R_j$-regular, by Lemma \ref{first_syzygy_for_direct_sum}, we get $\Omega_1^R(J)= \m_1(-1) \oplus \m_2(-1)$. Since $R$ is Koszul, by Corollary \ref{KoszulCorollary},  $R_1$ and $R_2$ are Koszul, and hence $\reg_{R_j}(\m_j) =1$ for $j=1,2$. Hence, by Proposition \ref{Prop-equality-of-reg}, we get that $J$ has a linear resolution. 
 
b) The conditions $\beta_{i,i}^R(M)=c\beta_{i,i}^R(R/J)$ for all $i\geq 2$ give us equalities at the level of Betti tables, and hence Hilbert series of $\Omega_2^R(M)$ and $\Omega_2^R(R/J)$. In particular, we have $\beta^R(\Omega_2^R(M))=c\beta^R(\Omega_2^R(R/J))$, and hence $H_{\Omega_2^R(M)}(z)=cH_{\Omega_2^R(R/J)}(z)$.

Now, $\Omega_2^R(R/J)= \m_1 (-1) \oplus \m_2(-1)$.  Therefore, $\m=\m_1 \oplus\m_2$ implies
\[H_{\Omega_2^R(R/J)}(z)=z H_{\mathfrak m_1}(z) + z H_{\mathfrak m_2}(z) =  z H_{\mathfrak m}(z)= z (H_R(z) -1),\] 
which gives $H_{\Omega_2^R(M)}(z)=cz(H_R(z)-1)$. 

Now, with $\beta_i = \beta_i^R(M)$, the exact sequence $0\longrightarrow \Omega_2^R(M) \longrightarrow R(-1)^{\beta_1}\longrightarrow R^{\beta_0}\longrightarrow M\longrightarrow 0$ gives 

	\[ H_M(z)= (\beta_0-z\beta_1)H_R(z)+H_{\Omega_2^R(M)}(z)=(\beta_0-z\beta_1)H_R(z)+cz(H_R(z)-1).
		\]	
  Let $m=\dim(M)$ and $d=\dim(R)$. We want to show that $d-m \leq 1$. 
  
Writing $H_M(z) = g(z)/(1-z)^m$, $H_R(z)= f(z)/(1-z)^d$ with $g(1)> 0$ and $f(1)>0$, and multiplying by  $(1-z)^m$, the above equality can be rewritten as
	\begin{eqnarray*}
		g(z)+cz(1-z)^m&=&\dfrac{(\beta_0-z\beta_1+cz)f(z)}{(1-z)^{d-m}}.
	\end{eqnarray*}	
Since $g(1)>0$, $c>0$, and $f(1)>0$ we get that $(1-z)^{d-m} \mid \beta_0-z(\beta_1-c)$. Therefore, $d-m\leq 1$, i.e., $\codim(M)\leq 1$.
\end{proof}

\begin{thm}\label{MaintheoremFibreProducts}
 Let $R_1, R_2$ be standard graded $\sk$-algebras, and $R= R_1 \times_{\mathsf k} R_2$  be such that $\depth(R)=1$. If  $\mathbb{B}_{\mathbb{Q}}(R)=\mathbb{B}^{\text{pure}}_{\mathbb{Q}}(R)$, then $R$ is Cohen-Macaulay and $H_R(z)= {(1+nz)}/{(1-z)}$ for some $n \in \mathbb{Z}$.
\end{thm}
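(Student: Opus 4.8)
The plan is to translate the whole statement into a single proportionality of Betti tails and to feed that into Lemma \ref{dimension}(b), reading off the Hilbert series at the end. First I would record the routine reductions. By Theorem \ref{second_syzygy_linear} the hypothesis $\BB_{\mathbb{Q}}(R)=\BB^{\text{pure}}_{\mathbb{Q}}(R)$ forces $R$ to be Koszul, hence $R_1,R_2$ are Koszul by Corollary \ref{KoszulCorollary}. Since $\depth(R)=1$, Remark \ref{rmk:basicProperitesOfFibreProducts}(c) gives $\depth(R_1),\depth(R_2)\geq 1$, so linear regular elements $x_1\in R_1$, $x_2\in R_2$ exist; set $J=\langle x_1,x_2\rangle$, which has a linear resolution by Lemma \ref{dimension}(a). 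A short computation gives $R/J\simeq (R_1/\langle x_1\rangle)\times_{\sk}(R_2/\langle x_2\rangle)$, and combining $H_R=H_{R_1}+H_{R_2}-1$ with $H_{R_j/\langle x_j\rangle}=(1-z)H_{R_j}$ yields the key identity
\[ H_{R/J}(z)=(1-z)H_R(z)-z. \]
Hence the desired conclusion $H_R(z)=(1+nz)/(1-z)$ is equivalent to $H_{R/J}$ being a polynomial of degree at most $1$, i.e.\ to $R/J$ being Artinian with $\m^2\subseteq J$; and because $\depth(R)=1$, the Artinian-ness of $R/J$ is the same as $R$ being Cohen--Macaulay of dimension one. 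So it suffices to prove $R/J$ is concentrated in degrees $0$ and $1$.

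Next I would set up the Betti-theoretic input. Since $R$ is Koszul, the linear strand of any module with a linear resolution is read off from its Hilbert series via $\sum_{i}(-1)^i\beta^R_{i,i}(M)z^i=H_M(z)/H_R(z)$; in particular
\[ \sum_i(-1)^i\beta^R_{i,i}(\sk)z^i=\frac{1}{H_R(z)},\qquad \sum_i(-1)^i\beta^R_{i,i}(R/J)z^i=(1-z)-\frac{z}{H_R(z)}. \]
The module $\sk$ is generated in degree $0$, has a linear resolution, and $\codim(\sk)=\dim R$. So the plan is to apply Lemma \ref{dimension}(b) to $M=\sk$: if one can show $\beta^R_{i,i}(\sk)=c\,\beta^R_{i,i}(R/J)$ for all $i\geq 2$ and some $c\in\BQ_{>0}$, then Lemma \ref{dimension}(b) forces $\dim R\leq 1$, hence $\dim R=1$ and $R$ is Cohen--Macaulay. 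Feeding the same proportionality into the two displayed series, the combination $(1+cz)/H_R(z)$ is forced to be a polynomial of degree at most $1$ with value $1$ at $z=0$, and as $\dim R=1$ it must be $1-z$; this gives $H_R(z)=(1+cz)/(1-z)$ in one stroke, so minimal multiplicity comes for free. Thus the entire theorem reduces to the single proportionality of the two linear strands in columns $i\geq 2$.

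Establishing that proportionality is the hard part, and it is exactly here that $\BB_{\mathbb{Q}}(R)=\BB^{\text{pure}}_{\mathbb{Q}}(R)$ must be used essentially. Both $\sk$ and $R/J$ have infinite projective dimension and linear (hence pure) resolutions supported in a single row, and by Theorem \ref{second_syzygy_linear} every infinite-projective-dimension pure $R$-module is eventually linear from the second syzygy; consequently, in any pure-cone decomposition of a row-$0$ Betti table only fully linear pure modules occur, and their tails are infinite linear strands. The mechanism I expect to prove is a rigidity statement: under the hypothesis the only infinite linear (row-$0$) extremal ray is $\beta^R(\sk)$ itself, so that the infinite part of the decomposition of $\beta^R(R/J)$ is forced to be a multiple of $\beta^R(\sk)$, giving the proportionality. (This rigidity is visibly present in the minimal-multiplicity picture of Remark \ref{dim0and1extremals}(b), where the only row-$0$ extremal rays are those of $R$, $R/\langle l\rangle$, and $\sk$.) To attack it I would combine Proposition \ref{DecompositionLemma}, applied to pure $R_1$- and $R_2$-modules, with the explicit computation $\Omega_2^R(R/J)\simeq \m_1(-1)\oplus\m_2(-1)$ coming from Proposition \ref{first_syzygy}, comparing the second syzygies of $\sk$ and of $R/J$ directly.

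The main obstacle, then, is this rigidity/uniqueness of the infinite linear strand; it is genuinely equivalent to the theorem, which is why it cannot be bypassed. The crucial contrast is with non-minimal-multiplicity fibre products such as $\sk[x,y]\times_{\sk}\sk[z]$, where $\dim R=2$ and where one expects a second, independent infinite linear strand to appear (destroying proportionality and hence the pure-cone equality). I would therefore aim to show that the existence of any infinite linear strand not proportional to $\beta^R(\sk)$ produces a Betti table lying outside $\BB^{\text{pure}}_{\mathbb{Q}}(R)$, contradicting the hypothesis; once that is in hand, the proportionality, Lemma \ref{dimension}(b), and the Hilbert-series identity above finish the proof.
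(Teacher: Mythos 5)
Your reductions (Koszulness via Theorem \ref{second_syzygy_linear} and Corollary \ref{KoszulCorollary}, the regular elements $x_1,x_2$, the linearity of $J$ from Lemma \ref{dimension}(a), and the identity $H_{R/J}(z)=(1-z)H_R(z)-z$) are all correct, and your endgame — feed a proportionality of linear strands into Lemma \ref{dimension}(b) and then into the Hilbert series — is sound \emph{if} the proportionality is available. But the proposal has a genuine gap exactly where you say the difficulty lies: the claimed ``rigidity'' (that the only extremal ray of $\BB_{\mathbb Q}(R)$ carried by an infinite linear row-$0$ strand is $\beta^R(\sk)$, so that the infinite part of any decomposition of $\beta^R(R/J)$ is a multiple of $\beta^R(\sk)$) is never proved, and you offer no workable mechanism for it beyond the intention to ``produce a Betti table lying outside $\BB^{\text{pure}}_{\mathbb Q}(R)$.'' Since this rigidity is, as you yourself note, essentially as strong as the theorem, the proposal as written is circular at its core. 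Moreover your parenthetical justification appeals to Remark \ref{dim0and1extremals}(b), which only applies \emph{after} one knows $R$ is Cohen--Macaulay of minimal multiplicity — the very conclusion to be proved.

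Your further claim that this rigidity ``cannot be bypassed'' is false, and seeing how the paper bypasses it is instructive. The paper never compares $R/J$ with $\sk$ at all. Instead it truncates: with $I=J+\m^2$, Remark \ref{lemma-generators-up-to-degree-j} gives $\beta^R_{i,i}(R/I)=\beta^R_{i,i}(R/J)$ for $i\geq 2$, while $H_{R/I}(z)=a+bz$ is a polynomial of degree at most $1$. One then decomposes $\beta^R(R/I)=c_0\beta^R(M_0)+\sum_{j\geq 1}c_j\beta^R(M_j)$ with $M_0$ linear and the $M_j$ ($j\geq1$) pure but non-linear; Theorem \ref{second_syzygy_linear} forces $\Omega_2^R(M_j)$ to be linear, hence $\beta^R_{i,i}(M_j)=0$ for $i\geq 2$, so the \emph{unknown} module $M_0$ alone carries the diagonal: $c_0\beta^R_{i,i}(M_0)=\beta^R_{i,i}(R/J)$ for $i\geq 2$. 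Lemma \ref{dimension}(b) is then applied to $M_0$ (not to $\sk$), giving $\codim(M_0)\leq 1$; the Hilbert-series identity $a+bz=\sum_j c_j H_{M_j}(z)$ forces every $M_j$, in particular $M_0$, to be Artinian (multiply by $(1-z)^{\max d_j}$ and evaluate at $z=1$), whence $\dim(R)\leq 1$ and $R$ is Cohen--Macaulay; a final degree comparison as in the proof of Lemma \ref{dimension} yields $H_R(z)=(1+nz)/(1-z)$. In short: the identification of the linear part of the decomposition with a specific module ($\sk$) — the step your proposal cannot supply — is simply not needed; purity plus the truncation trick extracts the proportionality against an anonymous $M_0$, which is all that Lemma \ref{dimension}(b) requires. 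To repair your proposal you would either have to prove your rigidity statement independently (no easier than the theorem) or switch to the paper's truncation argument.
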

\begin{proof} We first prove that $\dim(R)=1$.
Since $\depth(R)=1$, by Remark \ref{rmk:basicProperitesOfFibreProducts}(c), each $R_j$ has a linear regular element, say $x_j$. 
Let $ J=\langle x_1, x_2 \rangle R$ and $I= J+ \m^2$. Then, by Remark \ref{lemma-generators-up-to-degree-j}, we have $\beta_{i,i}^R(R/J)=\beta_{i,i}^R(R/I)$ for all $i \geq 2$.

 Suppose $\beta^R(R/I) =c_0 \beta^R(M_0) + \sum_{j=1}^r c_j \beta^R(M_j)$, where $M_0$ has a linear resolution and for $j\geq 1$,  $M_j$ has a pure resolution which is not linear. 
By Theorem \ref{second_syzygy_linear}, $\Omega_2^R(M_j)$ has a linear resolution, and hence $\beta_{i,i}^R(M_j)=0$ for all $i \geq 2$ and $j\geq 1$. In particular, we have $c_0\beta^R_{i,i}(M_0)=\beta^R_{i,i}(R/I)=\beta^R_{i,i}(R/J)$ for $i\geq 2$.  Since $\beta_{2,2}^R(R/J) \neq 0$, we have $c_0 >0$, and hence Lemma \ref{dimension} forces $\codim(M_0)\leq 1$.

The equality $\beta^R(R/I) = \sum_{j=0}^r c_j \beta^R(M_j)$ implies 
$H_{R/I}(z) = \sum_{j=0}^r c_j H_{M_j}(z)$. We may assume that $c_j >0$ for all $j$, and write $H_{M_j}(z) = g_j(z) / (1-z)^{d_j}$, where $g_j(1) >0$, and $d_j= \dim(M_j)$ for all $j \geq 0$. Moreover, $H_{R/I}(z) = a+bz$ for some $a, b \in \BZ$.

We now claim that the following hold for each $j$: (i) $M_j$ is Artinian, i.e., $d_j=0$, and (ii) $\deg(g_j) \leq 1$. 

Let $d= \max\{ d_j \mid 0 \leq j \leq r\}$, and $S= \{ j \mid \dim(M_j)=d\}$. Claim (i) follows if we show $d=0$. 

Suppose $d>0$. Consider the equation $H_{R/I}(z)=a+bz= \sum\limits_{j=0}^{r} c_j \dfrac{g_j(z)}{(1-z)^{d_j}}$. Multiplying both sides by $(1-z)^d$ and substituting $z=1$, we get $0= \sum_{j\in S}c_jg_j(1)$, which is a contradiction since the right hand side is positive. This proves Claim (i).

Since each $M_j$ is Artinian, the equality $H_{M_j}(z)=g_j(z)$ shows that the coefficients of $g_j$ are non-negative. In particular, $a+bz=\sum_{j=0}^r c_jg_j(z)$ forces $\deg(g_j) \leq 1$ for all $j$, proving Claim (ii).

By Claim (i), since $M_0$ is Artinian, we see that $\codim(M_0) \leq 1$ forces $\dim(R) \leq 1$. This proves that $R$ is Cohen-Macaulay, since $\depth(R)=1$.

Finally, writing $H_R(z)= f(z)/(1-z)$ with $f(1)>0$, the proof of Lemma \ref{dimension} applied to $M=M_0$ shows that $$g_0(z)+c_0z = \dfrac{(\beta_0 - \beta_1 z +c_0z) f(z)}{1-z}.$$
Multiplying both sides by $(1-z)$, and using the fact that $(1-z) \nmid f(z)$, we see that $\deg(f(z)) \leq 1$ by comparing degrees.
\end{proof}

In particular, Remark \ref{dim0and1extremals}, and Theorem \ref{MaintheoremFibreProducts} imply the following characterization.
 
\begin{thm}\label{thm:mainThmfibreProducts} Let $R_1, R_2$ be standard graded $\sk$-algebras of depth at least 1, and $R= R_1 \times_{\mathsf{k}} R_2$. Then the following are equivalent.
\begin{enumerate}[{\rm i)}]
    \item $\BB_{\mathbb{Q}}(R)= \BB^{\text{pure}}_{\mathbb{Q}}(R)$.
    \item $R$ is Cohen-Macaulay and $H_R(z)=(1+nz)/(1-z)$.
\end{enumerate}
In this case, the extremal rays of $\BB_{\mathbb{Q}}(R)$ are as observed in Remark \ref{dim0and1extremals}(b).
\end{thm}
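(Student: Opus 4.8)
The plan is to read this statement not as a fresh argument but as a synthesis of two results already established, bridged by a single depth computation. The pivotal observation is that the hypotheses on $R_1$ and $R_2$ force $\depth(R)=1$: since $\depth(R_1)\geq 1$ and $\depth(R_2)\geq 1$, Remark \ref{rmk:basicProperitesOfFibreProducts}(c) gives $\depth(R)=\min\{1,\depth(R_1),\depth(R_2)\}=1$. This is exactly the condition needed to unlock Theorem \ref{MaintheoremFibreProducts}, so I would record it first as the common hypothesis underlying both implications.

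For the implication (i) $\Rightarrow$ (ii) there is nothing more to do: with $\depth(R)=1$ verified, this is precisely the content of Theorem \ref{MaintheoremFibreProducts}, which concludes that $R$ is Cohen-Macaulay with $H_R(z)=(1+nz)/(1-z)$.

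For the converse (ii) $\Rightarrow$ (i), I would assume $R$ is Cohen-Macaulay with $H_R(z)=(1+nz)/(1-z)$ and first convert the depth information into dimension information: since $R$ is Cohen-Macaulay, $\dim(R)=\depth(R)=1$. Hence the given Hilbert series matches the normalization $(1+nz)/(1-z)^d$ of Remark \ref{dim0and1extremals} with $d=\dim(R)=1\leq 1$, and that remark applies verbatim to yield $\BB_{\mathbb{Q}}(R)=\BB^{\text{pure}}_{\mathbb{Q}}(R)$. The closing assertion about extremal rays follows in the same breath: under either equivalent condition we have shown $\dim(R)=1$, so we land in case (b) of Remark \ref{dim0and1extremals}, which describes the extremal rays as spanned by the Betti diagrams of shifts of $R$, of $R/\m^i$, and of $R/\langle l\rangle^i$.

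I do not expect a genuine obstacle here, since the substantive work lives in Theorem \ref{MaintheoremFibreProducts} and Remark \ref{dim0and1extremals}; this theorem merely packages them into an equivalence. The two points that do require care are (a) correctly extracting $\depth(R)=1$ from the fibre-product depth formula so that Theorem \ref{MaintheoremFibreProducts} is applicable, and (b) in the converse direction, using Cohen-Macaulayness to upgrade \emph{depth one} to \emph{dimension one}, which is what aligns the stated Hilbert series with the $d=1$ hypothesis of Remark \ref{dim0and1extremals}.
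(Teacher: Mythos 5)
Your proof is correct and follows essentially the same route as the paper, which presents this theorem as an immediate consequence of Theorem \ref{MaintheoremFibreProducts} (for (i) $\Rightarrow$ (ii)) and Remark \ref{dim0and1extremals} (for (ii) $\Rightarrow$ (i) and the description of the extremal rays). The two details you flag --- extracting $\depth(R)=1$ from Remark \ref{rmk:basicProperitesOfFibreProducts}(c) and using Cohen-Macaulayness to get $\dim(R)=1$ so that the remark applies with $d=1$ --- are exactly the glue the paper leaves implicit.
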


\begin{rmk}\label{rmk:mainResults}{\rm Let $R$ be a standard graded $\sk$-algebra with $\BB_{\mathbb{Q}}(R) =\BB^{\text{pure}}_{\mathbb{Q}}(R)$.
 In \cite[Theorem 7.3.4]{Ku17}, it is proved that if $\depth(R)=0$, then $R$ is Artinian and level.
     Also observe that if $R$ is a fibre product with $\depth(R)=1$, then by Theorem \ref{MaintheoremFibreProducts}, $R$ is Cohen-Macaulay and level. 
    Thus, we have the following observations: 
\begin{enumerate}[a)]
    \item If $R$ is a fibre product with $\BB_{\mathbb{Q}}(R)= \BB^{\text{pure}}_{\mathbb{Q}}(R)$, then $R$ is Cohen-Macaulay and level.
    \item In the statement of Theorem \ref{thm:mainThmfibreProducts}, the condition on depth can be changed to $\dim(R_i) \geq 1$.
    \end{enumerate}  
In light of (b) above, we end this article with the following question.
}\end{rmk}

\begin{question}
Let $R= R_1 \times_{\mathsf k} R_2$ be Artinian (with $R_1 \neq \sk \neq R_2$).\\ If $\BB_{\mathbb{Q}}(R)= \BB^{\text{pure}}_{\mathbb{Q}}(R)$, then is $H_R(z)=1+nz$ for some $n \in \mathbb N$?
\end{question}

\end{document}